\newtheorem{theorem}{Theorem}[section]
\newtheorem{lemma}[theorem]{Lemma}
\newtheorem{corollary}[theorem]{Corollary}
\theoremstyle{definition}
\theoremstyle{remark}
\newtheorem{remark}[theorem]{Remark}
\numberwithin{equation}{section}
\newcommand{\tnorm}{\@ifstar\@tnorms\@tnorm}
\newcommand{\@tnorms}[1]{%
	\left|\mkern-1.5mu\left|\mkern-1.5mu\left|
	#1
	\right|\mkern-1.5mu\right|\mkern-1.5mu\right|
}
\newcommand{\@tnorm}[2][]{%
	\mathopen{#1|\mkern-1.5mu#1|\mkern-1.5mu#1|}
	#2
	\mathclose{#1|\mkern-1.5mu#1|\mkern-1.5mu#1|}
}
\newcommand{\llangle}[0]{\langle\!\langle}
\newcommand{\rrangle}[0]{\rangle\!\rangle}
\begin{document}
%-----------------------------------------------------------------------------
\title[Space-time HDG for linear free-surface waves]{A space-time
  hybridizable discontinuous Galerkin method for linear free-surface
  waves}
%-----------------------------------------------------------------------------
\author[G. Sosa Jones]{Giselle Sosa Jones}
\address{Department of Applied Mathematics, University of Waterloo, Ontario, Canada}
\email{gsosajon@uwaterloo.ca}

\author[J. J. Lee]{Jeonghun J. Lee}
\address{Department of Mathematics, Baylor University, Texas, USA}
\email{jeonghun\_lee@baylor.edu}
% %\thanks{}

\author[S. Rhebergen]{Sander Rhebergen}
\address{Department of Applied Mathematics, University of Waterloo, Ontario, Canada}
\email{srheberg@uwaterloo.ca}
\thanks{Natural Sciences and Engineering Research Council of Canada
  through the Discovery Grant program (RGPIN-05606-2015) and the
  Discovery Accelerator Supplement (RGPAS-478018-2015).}
%-----------------------------------------------------------------------------
%    \subjclass is required.
\subjclass[2010]{Primary
  65M12, % Stability and convergence of numerical methods
  65M15, % Error bounds
  65M60, % Finite elements, Rayleigh-Ritz and Galerkin methods, finite methods
  76B07.  % Free-surface potential flows
}
%-----------------------------------------------------------------------------
\date{}
%-----------------------------------------------------------------------------
\begin{abstract}
  We present and analyze a novel space-time hybridizable discontinuous
  Galerkin (HDG) method for the linear free-surface problem on
  prismatic space-time meshes. We consider a mixed formulation which
  immediately allows us to compute the velocity of the fluid. In order
  to show well-posedness, our space-time HDG formulation makes use of
  weighted inner products. We perform an \emph{a priori} error
  analysis in which the dependence on the time step and spatial mesh
  size is explicit. We provide two numerical examples: one that
  verifies our analysis and a wave maker simulation.
\end{abstract}
%-----------------------------------------------------------------------------
\maketitle
%-----------------------------------------------------------------------------
\section{Introduction}

The study of water waves is crucial when designing, for example,
ships, offshore structures, levees, and seawalls. For this reason
free-surface problems are of great interest in many fields of
engineering, such as naval and maritime engineering. In order to
analyze the interaction between waves and prototypes of ships and
other structures, experiments in water tanks using wave makers may be
used to simulate realistic maritime situations. However, many complex
cases can be difficult to reproduce in water tanks. Developing
efficient and accurate numerical models is therefore important to help
simulate water-wave phenomena.

In general, free-surface problems are mathematically described by a
set of partial differential equations that model the movement of the
fluid, and a set of boundary conditions that describe and determine
the free-surface. These problems are particularly hard to solve
because the free-surface that defines the shape of the domain is part
of the solution to the problem. In order to simplify the problem,
certain assumptions on the flow can be made. For example, many
applications can be modeled by considering the fluid to be
incompressible, inviscid, and irrotational. Moreover, assuming that
the wave displacement is small, the free-surface boundary conditions
may be linearized.

In order to effectively model water waves, we require a stable and
higher-order accurate numerical method to minimize numerical diffusion
and dispersion. These properties may be achieved, for example, by
using a discontinuous Galerkin (DG) method for the spatial
discretization combined with a higher-order accurate time stepping
scheme, as done, for example, for linear water waves in
\cite{Vegt:2005}. In \cite{Vegt:2005} they combined a second order
accurate time stepping scheme with a higher-order accurate DG method
for the spatial discretization. They proved stability and provided an
\emph{a priori} error analysis of their method.

Alternatively, one may use a DG method to discretize partial
differential equations in space and time simultaneously. These
space-time DG methods achieve higher-order accuracy in both space and
time simply by increasing the polynomial approximation in
space-time. Space-time DG methods have successfully been applied to
many different problems, such as compressible flows \cite{Klaij:2006,
  Vegt:2002}, incompressible flows \cite{Rhebergen:2013a,
  Tavelli:2015, Tavelli:2016, Vegt:2008}, depth-averaged flows
\cite{Ambati:2007, Rhebergen:2009} and non-linear free-surface
problems \cite{Gagarina:2014, Vegt:2007}. However, there is a major
drawback to space-time DG methods: since the dimension of the problem
is increased by one, the number of degrees-of-freedom is much higher
compared to standard DG methods. A consequence is that space-time DG
methods are generally computationally more expensive than traditional
approaches. 

Hybridizable discontinuous Galerkin (HDG) methods were first
introduced in \cite{Cockburn:2009a} as a computationally more
efficient alternative to discontinuous Galerkin methods. HDG methods
maintain the local conservation properties of DG methods, but have
been shown to be as computationally efficient in certain cases as
continuous Galerkin finite element methods \cite{Kirby:2012,
  Yakovlev:2016}. This is achieved by introducing a variable that
exists only on the facets. Communication between two neighbouring
elements is through this facet variable. By coupling
degrees-of-freedom on elements only through the degrees-of-freedom on
facets, the element degrees-of-freedom can be eliminated. This static
condensation results in a global linear system for the facet variable
only of which the size is significantly smaller than the global
linear system obtained by a DG discretization. Once the facet variable
is known the element unknowns can be reconstructed element-wise.

Space-time HDG methods apply the HDG method in space-time
\cite{Horvath:2019, Kirk:2019, Rhebergen:2012, Rhebergen:2013b,
  Wang:2014} and result in an efficient alternative to space-time DG
methods. In this paper we present a novel space-time HDG method for
the linear free-surface problem. We consider a mixed formulation based
on the splitting introduced originally for the wave equation in
\cite{Cockburn:2016}. This is different from previous works on DG
methods for free-surface problems in which the primal form of the
problem is considered \cite{Vegt:2005, Vegt:2007}. The reason to
consider the mixed formulation is that it allows us to immediately
obtain the velocity of the fluid without post-processing. To the best
of our knowledge, such a splitting has not been considered for
inviscid, incompressible and irrotational free-surface flow
problems. Furthermore, as opposed to standard discontinuous Galerkin
discretizations, our space-time HDG formulation uses weighted
inner-products. The idea of using weighted inner-products was
previously applied in \cite{French:1993} to the wave equation. Here,
we use weighted inner-products to prove well-posedness of the
space-time HDG formulation of the linear free-surface waves problem.

The \emph{a priori} error analysis in this paper is projection based
\cite{Cockburn:2010}, but modified for weighted inner-products and
space-time prismatic elements. Additionally, we derive scaling
identities between the reference and physical space-time prisms that
separate the spatial dimension from the temporal dimension. Such
anisotropic scaling identities were previously derived in the context
of anisotropic meshes in \cite{Georgoulis:2003} and space-time meshes
in \cite{Kirk:2019, Sudirham:2006}. Furthermore, these scaling
identities allow us to obtain \emph{a priori} error estimates in which
the dependence on the time step and the spatial mesh size is
explicit. This is in contrast to error bounds that depend on the
space-time mesh size as derived, for example, for parabolic problems
in \cite{Cangiani:2017}.

The outline of this paper is as follows. In \cref{sec:problem} we
formulate the linear free-surface problem in a space-time
setting. Next, in \cref{sec:ST_HDG}, we present the space-time HDG
method and show well-posedness of the discrete
formulation. \Cref{sec:analysistools} provides the tools required for
the \emph{a priori} error analysis that we present in
\cref{sec:erroranalysis}. Our theoretical results are verified by
numerical examples in \cref{sec:numerical_results} and conclusions are
drawn in \cref{sec:conclusions}.

%------------------------------------------------------------------------------
\section{The linear free-surface problem}
\label{sec:problem}

Let $b(x_1) > 0$ be a piecewise linear polynomial representing the
bottom topography and let $H$ be the average water depth. We define
the flow domain as
$\Omega := \cbr{\boldsymbol{x}=(x_1,x_2) \in \mathbb{R}^2\, :\,
  -H+b(x_1) < x_2 < 0,\ L < x_1 < R}$ where $L$ and $R$ are given
constants. The boundary of the domain, $\partial\Omega$, is
partitioned into a free-surface boundary
$\Gamma_S := \cbr{\boldsymbol{x} \in \mathbb{R}^2 \,:\,x_1 \in [L,
  R],\, x_2 = 0 }$, periodic boundaries $\Gamma_P$, and a solid
boundary
$\Gamma_N := \cbr{ \boldsymbol{x} \in \mathbb{R}^2 \,:\, x_1 \in [L,
  R],\, x_2 = -H+b(x_1)}$. These boundaries do not overlap and are
such that
$\bar{\Gamma}_S \cup \bar{\Gamma}_N \cup \bar{\Gamma}_P =
\partial\Omega$. The boundary outward unit normal vector is denoted by
$\boldsymbol{n}$. See \cref{fig:domain} for an illustration of the
notation.

\begin{figure}[tbp]
  \begin{center}
    \begin{tikzpicture}[scale=0.8,
      important line/.style={thick},
      ]
      
      \draw (5,4.3) node{$\Gamma_S$};
      \draw (5,-0.3) node{$\Gamma_N$};
      \draw (-0.4,2) node{$\Gamma_P$};
      \draw (10.4,2) node{$\Gamma_P$};
      
      \draw[important line] (10,0) -- (10,4) ;
      \draw[important line] (10,4) -- (0,4) ;
      \draw[important line] (0,4) -- (0,0) ;
      \draw[important line] (0,0) -- (3,0.8) ;
      \draw[important line] (3,0.8) -- (7,0.1) ;
      \draw[important line] (7,0.1) -- (10,0) ;
      \draw[dashed] (0,0) -- (0,-1) ;
      \draw (0,-1.2) node{$x_1 = L$};
      \draw[dashed] (10,0) -- (10,-1) ;
      \draw (10,-1.2) node{$x_1 = R$};
      \draw[dashed] (10,4) -- (11,4) ;
      \draw (11.8,4) node{$x_2 = 0$};
      \draw[dashed] (0,0) -- (11,0) ;
      \draw (12.1,0) node{$x_2 = -H$};
      \draw (5,2) node{$\Omega$};
    \end{tikzpicture}    
  \end{center}
  \caption{Depiction of the flow domain $\Omega \subset
    \mathbb{R}^2$.}
  \label{fig:domain}
\end{figure}
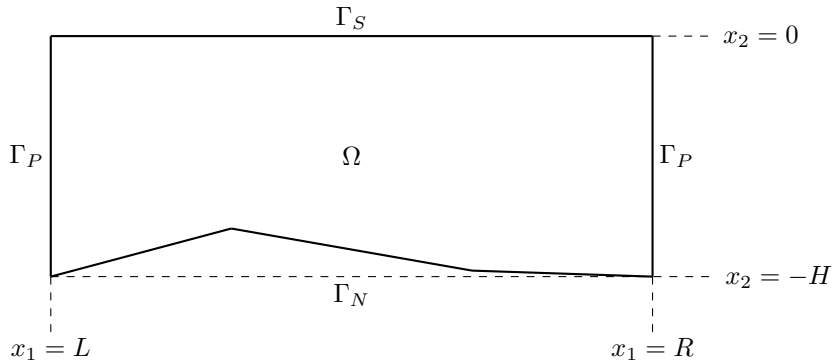

We consider an incompressible, inviscid fluid, with an irrotational
velocity field $\boldsymbol{u}$. Let
$\zeta : [L, R] \times [0,T] \to \mathbb{R}$ denote the wave height
and let $\phi : \Omega \times [0,T] \to \mathbb{R}$ denote the
velocity potential so that $\boldsymbol{u} = \nabla \phi$. The linear
free-surface problem for irrotational flow is given by
\begin{subequations}
  \begin{align}
    \label{eq:LaplaceEQ}
    -\nabla^2 \phi & = 0 && \text{in}\ \Omega,
    \\
    \label{eq:KinematicBC}
    \nabla\phi \cdot \boldsymbol{n}
                            &= \partial_t\zeta && \text{on}\ \Gamma_S,
    \\
    \label{eq:DynamicBC}
    \partial_t\phi + \zeta & = 0 &&\text{on}\ \Gamma_S,
    \\
    \label{eq:BottomBC}
    \nabla\phi \cdot \boldsymbol{n}
                            &= 0 && \text{on}\ \Gamma_N,
  \end{align}
  \label{eq:Problem}
\end{subequations}
where \cref{eq:KinematicBC} and \cref{eq:DynamicBC} are, respectively,
the kinematic and dynamic linear free-surface boundary conditions, and
where \cref{eq:BottomBC} imposes a no-normal flow condition on the
solid boundary. Periodic boundary conditions are applied on
$\Gamma_P$. To close the problem we require the initial conditions
$\phi(\boldsymbol{x}, 0) = \phi^0(\boldsymbol{x})$ and
$\zeta(x_1, 0) = \zeta^0(x_1)$.

In \cite{Vegt:2005} the kinematic \cref{eq:KinematicBC} and dynamic
\cref{eq:DynamicBC} boundary conditions were combined into a single
equation for $\phi$. The resulting linear free-surface model becomes:
\begin{subequations}
  \begin{align}
    \label{eq:LaplaceEQ_vegt}
    -\nabla^2 \phi & = 0 && \text{in}\ \Omega,
    \\
    \label{eq:KinematicDynamicBC}
    \partial_{tt}\phi + \nabla\phi \cdot \boldsymbol{n} & = 0 &&\text{on}\ \Gamma_S,
    \\
    \label{eq:BottomBC_vegt}
    \nabla\phi \cdot \boldsymbol{n}
                            &= 0 && \text{on}\ \Gamma_N.
  \end{align}
  \label{eq:Problem_vegt}
\end{subequations}
As discussed in the next section, we instead introduce a mixed
formulation of the linear free-surface problem \cref{eq:Problem} which
is more suitable for the space-time HDG method.

%------------------------------------------------------------------------------
\section{The space-time hybridizable discontinuous Galerkin method}
\label{sec:ST_HDG}

To introduce the space-time HDG method we first formulate the mixed
space-time formulation of the linear free-surface problem
\cref{eq:Problem}.

Space-time methods do not distinguish between temporal and spatial
variables. A point at time $t=x_0$ with position vector
$\boldsymbol{x}=(x_1,x_2)$ has Cartesian coordinates
$(x_0, \boldsymbol{x})$. We therefore pose the linear free-surface
problem \cref{eq:Problem} on a space-time domain defined as
$\mathcal{E} := \cbr{(x_0, \boldsymbol{x}) \in \mathbb{R}^3\,:
  \boldsymbol{x} \in \Omega,\,0 < x_0 < T }$.  The boundary
$\partial\mathcal{E}$ of the space-time domain $\mathcal{E}$ consists
of
$\Omega_0 := \cbr{(x_0, \boldsymbol{x}) \in \partial\mathcal{E} : x_0
  = 0}$,
$\Omega_N := \cbr{(x_0, \boldsymbol{x}) \in \partial\mathcal{E} : x_0
  = T}$ and
$\mathcal{Q}_{\mathcal{E}} := \cbr{ (x_0, \boldsymbol{x}) \in
  \partial\mathcal{E} : 0 < x_0 < T }$.  Furthermore,
$\mathcal{Q}_{\mathcal{E}}$ is subdivided as
$\mathcal{Q}_{\mathcal{E}} = \partial\mathcal{E}_{\mathcal{S}} \cup
\partial\mathcal{E}_{\mathcal{N}} \cup
\partial\mathcal{E}_{\mathcal{P}}$, where
$\partial\mathcal{E}_{\mathcal{S}} := \cbr{ (x_0, \boldsymbol{x}) \in
  \partial\mathcal{E} : \boldsymbol{x} \in \Gamma_S, 0 < x_0 < T }$,
$\partial\mathcal{E}_{\mathcal{N}} := \cbr{ (x_0, \boldsymbol{x}) \in
  \partial\mathcal{E} : \boldsymbol{x} \in \Gamma_N, 0 < x_0 < T }$,
and
$\partial\mathcal{E}_{\mathcal{P}} := \cbr{ (x_0, \boldsymbol{x}) \in
  \partial\mathcal{E} : \boldsymbol{x}\in \Gamma_P, 0 < x_0 < T }$.

We next introduce two new variables, namely
$\boldsymbol{q} = -\nabla\phi$ and $v = -\partial_t \phi$. A similar
choice of variables was introduced in \cite{Nguyen:2011} for the wave
equation. The linear free-surface problem \cref{eq:Problem} on the
space-time domain $\mathcal{E}$ can then be written as a mixed
space-time formulation which is given by
\begin{subequations}
  \begin{align}
    \label{eq:qDef}
    \partial_t \boldsymbol{q} - \nabla v
    & = 0 && \text{in}\ \mathcal{E},
    \\
    \label{eq:LaplaceForq}
    \nabla\cdot\boldsymbol{q}
    & = 0  && \text{in}\ \mathcal{E},
    \\
    \label{eq:FS_cond}
    -\boldsymbol{q}\cdot\boldsymbol{n}
    & = \partial_t v && \text{on}\ \partial\mathcal{E}_{\mathcal{S}},
    \\
    \label{eq:NeumBCq}
    -\boldsymbol{q}\cdot \boldsymbol{n}
    & = 0 && \text{on}\ \partial\mathcal{E}_{\mathcal{N}},
    \\
    \boldsymbol{q}
    & = -\nabla\phi_0 && \text{on}\ \Omega_0,
    \\
    v & = -\partial_t\phi(0,\boldsymbol{x}) &&\text{on}\ \Omega_0.
  \end{align}
  \label{eq:mixedformlinfreesurface}
\end{subequations}
Note that the wave height $\zeta$ will be the restriction of $v$ to
the free-surface $\Gamma_S$.

%------------------------------------------------------------------------------
\subsection{Notation}
\label{ss:ST_slabs}

Let $\mathcal{I} = [0,T]$ denote the time interval. We partition the
time interval into time levels
$0 = t_0 < t_1 < t_2 < \cdots < t_N = T$ and denote the
$n^{\text{th}}$ time interval by $I_n = (t_n,t_{n+1})$. The length of
each time interval is constant and is denoted by $\Delta t$. The
$n^{\text{th}}$ space-time slab is then defined as
$\mathcal{E}^n := \mathcal{E} \cap \del{I_n \times
  \mathbb{R}^2}$. Define
$\Omega_n := \cbr{(x_0, \boldsymbol{x}) \in \mathcal{E} : x_0 =
  t_n}$. The boundary of a space-time slab, $\partial\mathcal{E}^n$,
can then be divided into $\Omega_n$, $\Omega_{n+1}$ and
$\mathcal{Q}_{\mathcal{E}}^n := \partial\mathcal{E}^n \backslash
\del{\Omega_{n+1} \cup \Omega_n}$. We can further subdivide
$\mathcal{Q}_{\mathcal{E}}^n$ as
$\mathcal{Q}_{\mathcal{E}}^n = \partial\mathcal{E}_{\mathcal{S}}^n
\cup \partial\mathcal{E}_{\mathcal{N}}^n \cup
\partial\mathcal{E}_{\mathcal{P}}^n$, where
$\partial\mathcal{E}_{\mathcal{S}}^n := \cbr{ (x_0, \boldsymbol{x}) \in
  \partial\mathcal{E} : \boldsymbol{x} \in \Gamma_S, t_n < x_0 < t_{n+1} }$,
$\partial\mathcal{E}_{\mathcal{N}}^n := \cbr{ (x_0, \boldsymbol{x})
  \in \partial\mathcal{E} : \boldsymbol{x} \in \Gamma_N, t_n < x_0 <
  t_{n+1} }$,
and
$\partial\mathcal{E}_{\mathcal{P}}^n := \cbr{ (x_0, \boldsymbol{x})
  \in \partial\mathcal{E} : \boldsymbol{x}\in \Gamma_P, t_n < x_0 <
  t_{n+1} }$.

For linear free-surface waves the spatial domain $\Omega$ does not
change with time. We therefore introduce a space-time mesh as
follows. We first introduce a triangulation $\mathcal{T} := \cbr{K}$
of the domain $\Omega_n$, which is the same for all $n$. Each
space-time element $\mathcal{K} \subset \mathcal{E}^n$ is constructed as
$\mathcal{K} = K \times I_n$. The set of all space-time elements,
$\mathcal{T}^n := \cbr{\mathcal{K} : \mathcal{K} \subset \mathcal{E}^n}$ is then a triangulation of the
space-time slab $\mathcal{E}^n$. This is repeated for all $n$. See
\cref{fig:space_time_domain} for an illustration of $\mathcal{E}^n$.

\begin{figure}[tbp]
  \begin{center}
    \begin{tikzpicture}[important line/.style={very thick}, scale=0.8]
      
      % mesh nodes
      \coordinate (n1) at (4,0.7);
      \coordinate (n2) at (6,2);
      
      % t_n vertices
      \coordinate (l1) at (2,2.8);
      \coordinate (l2) at (7,2.8); 
      \coordinate (l3) at (2,-0.6);
      \coordinate (l4) at (7,-0.6); 
      
      % t_{n+1} vertices
      \coordinate (l5) at (4.3,3.8);
      \coordinate (l6) at (8.8,3.8);
      \coordinate (l7) at (4.3,0.8);
      \coordinate (l8) at (8.8,0.8);
      
      % mesh at t_n
      \draw[important line] (l1) -- (n1) ;
      \draw[important line] (l3) -- (n1) ;
      \draw[important line] (l2) -- (n1) ;
      \draw[important line] (l4) -- (n1) ;
      
      % mesh at t_{n+1}
      \draw[dashed] (l5) -- (n2) ;
      \draw[dashed] (l6) -- (n2) ;
      \draw[dashed] (l7) -- (n2) ;
      \draw[dashed] (l8) -- (n2) ;
      
      % edge of the mesh
      \draw[important line] (l1) -- (l2) ;
      \draw[important line] (l1) -- (l3) ;
      \draw[important line] (l2) -- (l4) ;
      \draw[important line] (l3) -- (l4) ;
      
      % time line
      \draw[important line] (l1) -- (l5) ;
      \draw[important line] (l2) -- (l6) ;
      \draw[important line] (l4) -- (l8) ;
      \draw[important line] (l5) -- (l6);
      \draw[important line] (l6) -- (l8);
      \draw[dashed] (l7) -- (l8);
      \draw[dashed] (l3) -- (l7);
      \draw[dashed] (l5) -- (l7);
      \draw[dashed] (n1) -- (n2);

      \draw[dashed] (l4) -- (8.4,-0.6);
      \node[right] at (8.5, -0.6){$x_0 = t_n$};
      \draw[dashed] (l8) -- (9.8,0.8);
      \node[right] at (9.9, 0.8){$x_0 = t_{n+1}$};
      
      % \node[right] at (5, 4){$\partial \mathcal{E}_{\mathcal{S}}^n$};
      
      % axes
      \draw[important line,->] (-1,-1.5) -- (-1,4) ;
      \draw[important line,->] (-1,-1.5) -- (8,-1.5) ;
      \draw[important line,->] (-1,-1.5) -- (1.8,0.3) ;
      
      \draw[important line]  (8,-1.5)--(-1,-1.5) -- (-1,4) ;
      
      % tags for axes
      \node[left] at (-1, 4){$x_2$};
      \node[left] at (-1, 2.8){$x_2 = 0$};
      \draw[dashed] (-1,2.8) -- (2,2.8) ;
      \node[left] at (8, -1.8){$x_1$};
      \node[left] at (1.7, 0.4){$x_0$};      
      \draw[dashed] (-1,-0.6) -- (2,-0.6) ;
      \node[left] at (-1, -0.6){$x_2 = -H$};
      % \node[left] at (-1., 0){$t^n$};
      % \node[left] at (-1., 2){$t^{n+1}$};
      % \node[right] at (7.2, 0.5){$\Omega^n$};
      % \node[right] at (7.2, 2.5){$\Omega^{n+1}$};
      % \node[right] at (8.5, 2){$\mathcal{E}^n$};
      
      % \node[left] at (.8, .5){$t = x_0$};
    \end{tikzpicture}    
  \end{center}
  \caption{Depiction of a space-time slab
    $\mathcal{E}^n \subset \mathbb{R}^3$.}
  \label{fig:space_time_domain}
\end{figure}
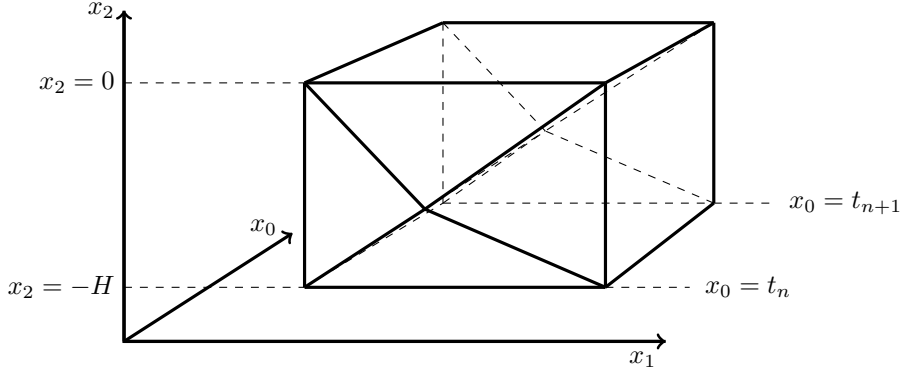

Consider a space-time element $\mathcal{K}_j^n \in \mathcal{T}^n$. Let
$K_j^n := \cbr[0]{ (x_0, \boldsymbol{x}) \in \partial\mathcal{K}_j^n :
  x_0 = t_n} \in \mathcal{T}$. The boundary of a space-time element
$\mathcal{K}_j^n \in \mathcal{T}^n$ is then composed of $K_j^n$,
$K_j^{n+1}$ and
$\mathcal{Q}_{\mathcal{K}_j^n} := \partial\mathcal{K}_j^n \backslash
\partial \mathcal{K}_0^n$ where
$\partial \mathcal{K}_0^n := K_j^n \cup K_j^{n+1}$.

The outward unit space-time normal vector field on
$\partial\mathcal{K}_j^n$ is denoted by
$\hat{\boldsymbol{n}}_j^n=((n_t)_j^n, \boldsymbol{n}_j^n)$, where
$(n_t)_j^n$ and $\boldsymbol{n}_j^n$ are, respectively, the temporal
and spatial parts of the space-time normal vector. Since the mesh does
not change with time, $\hat{\boldsymbol{n}}_j^n = (1, \boldsymbol{0})$
on $K_j^{n+1}$, $\hat{\boldsymbol{n}}_j^n = (-1, \boldsymbol{0})$ on
$K_j^{n}$, and $\hat{\boldsymbol{n}}_j^n = (0, \boldsymbol{n}_j^n)$ on
$\mathcal{Q}_{\mathcal{K}_j^n}$.

In the remainder of this paper we will omit the subscripts and
superscripts when referring to space-time elements, their boundaries,
and the normal vector wherever no confusion will occur.

In a space-time slab $\mathcal{E}^n$, the set and union of all faces
in $\partial\mathcal{E}_{\mathcal{S}}$ are denoted by
$\mathcal{F}_{\mathcal{S}}^n$ and $\Gamma_{\mathcal{S}}^n$,
respectively. Furthermore, the set and union of all interior and
boundary faces in $\mathcal{E}^n$ that are not on
$\Omega_n \cup \Omega_{n+1}$ are denoted by
$\mathcal{F}_{\mathcal{Q}}^n$ and $\Gamma_{\mathcal{Q}}^n$,
respectively. On the other hand, faces on $\Omega_n$ and
$\Omega_{n+1}$ are denoted by $\mathcal{F}_{\Omega}^n(t_n)$ and
$\mathcal{F}_{\Omega}^n(t_{n+1})$,
respectively. Finally,
$\partial\mathcal{E}_{\mathcal{S}}^n(t_n)$ denotes the set of edges
$e \in \partial\mathcal{E}_{\mathcal{S}}^n$ at $t = t_n$.

For triangular prismatic space-time elements we introduce the
following local spaces:
\begin{subequations}
  \begin{align}
    W_h(\mathcal{K})
    & := P_p(K) \otimes P_p(I_n),
    \\
    \boldsymbol{V}_h(\mathcal{K})
    & := \sbr[1]{P_p(K) \otimes P_p(I_n)}^2,
    \\
    M_h(\mathcal{F})
    &:= 
      Q_p(\mathcal{F}) \qquad \forall \mathcal{F} \subset \mathcal{Q}_{\mathcal{K}},
  \end{align}
\end{subequations}
where $P_p(D)$ is the space of polynomials of degree at most $p$ on a
domain $D$ and $Q_p(D)$ denotes the tensor-product polynomials of
degree $p$. The global finite element spaces are then defined as:
\begin{subequations}
  \begin{align}
    W_h
    & := \cbr{w \in L^2(\mathcal{E}^n)\, : w\vert_\mathcal{K} 
      \in W_h(\mathcal{K}),\,\forall \mathcal{K} \in \mathcal{T}^n},
    \\
    \boldsymbol{V}_h
    & := \cbr[2]{\boldsymbol{v} \in \sbr[1]{L^2(\mathcal{E}^n)}^2\, : 
      \boldsymbol{v}\vert_\mathcal{K} 
      \in \boldsymbol{V}_h(\mathcal{K}),
      \,\forall \mathcal{K}\in\mathcal{T}^n},
    \\
    M_h
    &:= 
      \cbr{\mu \in L^2(\Gamma_{\mathcal{Q}}^n) : \mu \vert_{\mathcal{F}} 
      \in M_h(\mathcal{F}), 
      \,\forall \mathcal{F} \in \mathcal{F}_{\mathcal{Q}}^n}.
  \end{align}
\end{subequations}

For scalar functions we introduce the following inner-products:
\begin{equation}
  \begin{aligned}
    \del[1]{v, w}_{\mathcal{K}}
    & = \int_{\mathcal{K}}vw \dif \mathcal{K},
    & 
    \langle v, w \rangle_{\mathcal{Q}}
    & = \int_{\mathcal{Q}}vw \dif \mathcal{Q},
    % 
    %\\
    %&
    & 
    \langle v, w \rangle_{K_j^n}
    & = \int_{K_j^{n}}vw \dif K,
    \\
    \llangle v, w \rrangle_{e}
    & = \int_{e} vw \dif e,
    &
    \langle v, w \rangle_{\mathcal{F}}
    & = \int_{\mathcal{F}}vw \dif \mathcal{F},
  \end{aligned}
\end{equation}
and
\begin{equation}
  \begin{aligned}
    \del[1]{v, w}_{\mathcal{T}^n}
    & = \sum_{\mathcal{K} \in \mathcal{T}^n}\del[1]{v, w}_{\mathcal{K}},
    & 
    \langle v, w \rangle_{\mathcal{F}_{\mathcal{Q}}^n}
    & = \sum_{\mathcal{K} \in \mathcal{T}^n} \langle v, w \rangle_{\mathcal{Q}},
    \\
    \langle v, w \rangle_{\mathcal{F}_{\Omega}^n(t_{n})}
    & = \sum_{\mathcal{K} \in \mathcal{T}^n} \langle v, w \rangle_{K_j^n},
    &
    \llangle v, w \rrangle_{\partial \mathcal{E}_{\mathcal{S}}^n(t_{n})}
    & = \sum_{e \in \partial\mathcal{E}_{\mathcal{S}}^n(t_{n})} \llangle v, w \rrangle_{e},
    \\
    \langle v, w \rangle_{\mathcal{F}_{\mathcal{S}}^n}
    & = \sum_{\mathcal{F} \in \mathcal{F}_{\mathcal{S}}^n}\langle v, w \rangle_{\mathcal{F}}.
  \end{aligned}
\end{equation}
Similar notation is used for vector functions, for example,
\begin{equation*}
  \del[1]{\boldsymbol{v}, \boldsymbol{w}}_{\mathcal{K}}
  = \int_{\mathcal{K}}\boldsymbol{v} \cdot \boldsymbol{w} \dif \mathcal{K}
  \qquad \text{and} \qquad
  \del[1]{\boldsymbol{v}, \boldsymbol{w}}_{\mathcal{T}^n} =
  \sum_{\mathcal{K} \in \mathcal{T}^n}\del[1]{\boldsymbol{v}, \boldsymbol{w}}_{\mathcal{K}}.
\end{equation*}
The $L^2$-norm of a function $v$ on a domain $D$ will be denoted by
$\norm{v}_D$.

%------------------------------------------------------------------------------
\subsection{Discretization}
\label{subsec:weak_form}

In this section we present the space-time HDG discretization for
\cref{eq:Problem} on a space-time slab $\mathcal{E}^n$. To be able to
show existence and uniqueness of a solution to our discretization we
use weighted inner products, as done originally for the wave equation
in \cite{French:1993}.

Let $f_n$ be a weight function depending only on time and which is
defined as $f_n(t) = e^{-\alpha(t-t_n)}$ with $\alpha > 0$. The
space-time HDG discretization for the linear free-surface problem
\cref{eq:mixedformlinfreesurface} is: Find
$\del{\boldsymbol{q}_h, v_h, \lambda_h} \in \boldsymbol{V}_h \times
W_h \times M_h$ such that for all
$\del{\boldsymbol{r}_h, w_h, \mu_h} \in \boldsymbol{V}_h \times W_h
\times M_h$ the following relations are satisfied:
\begin{subequations}
	\begin{multline}
	\label{eq:HDG_q_equation}
	- \del[1]{\boldsymbol{q}_h, f_n\partial_t \boldsymbol{r}_h}_{\mathcal{T}^n}
	- \del[1]{\boldsymbol{q}_h, \boldsymbol{r}_h f'_n}_{\mathcal{T}^n}
	+ \langle \boldsymbol{q}_h, \boldsymbol{r}_h f_n \rangle_{\mathcal{F}_{\Omega}^n(t_{n+1})}
	\\
	+ \del[1]{v_h, f_n \nabla \cdot \boldsymbol{r}_h}_{\mathcal{T}^n}
	- \langle \lambda_h, \boldsymbol{r}_h\cdot \boldsymbol{n} f_n \rangle_{\mathcal{F}_{\mathcal{Q}}^n} =
	\langle \boldsymbol{q}^-_h, \boldsymbol{r}_h f_n \rangle_{\mathcal{F}_{\Omega}^n(t_{n})},                  
	\end{multline}
	\begin{equation}
	\label{eq:HDG_v_equation}
	-\del[1]{w_h, f_n\nabla \cdot \boldsymbol{q}_h}_{\mathcal{T}^n}
	+ \langle \tau\del{v_h - \lambda_h}, w_h f_n \rangle_{\mathcal{F}_{\mathcal{Q}}^n} = 0,                                    
	\end{equation}
	% 
%	and
	%
	\begin{multline}
	\label{eq:HDG_lambda_equation}
	\langle \boldsymbol{q}_h 
	\cdot \boldsymbol{n} - \tau\del[1]{v_h - \lambda_h}, \mu_h f_n  \rangle_{\mathcal{F}_{\mathcal{Q}}^n}
	- \langle \lambda_h , f_n \partial_t \mu_h \rangle_{\mathcal{F}_{\mathcal{S}}^n}
	\\
	- \langle \lambda_h, \mu_h f'_n \rangle_{\mathcal{F}_{\mathcal{S}}^n}
	+ \llangle \lambda_h, \mu_h f_n \rrangle_{\partial \mathcal{E}_{\mathcal{S}}^n(t_{n+1})}
	=
	\llangle \lambda^-_h, \mu_h f_n \rrangle_{\partial \mathcal{E}_{\mathcal{S}}^n(t_{n})},
	\quad \forall \mu_h \in M_h,
	\end{multline}
	\label{eq:HDGProblem}
\end{subequations}
where $\tau > 0$ is a stabilization parameter, and $\boldsymbol{q}_h^-$ and $\lambda_h^-$ denote the known values of $\boldsymbol{q}_h$ and $\lambda_h$, respectively, at $x_0 = t_n$ from the previous space-time slab $\mathcal{E}^{n-1}$, or the initial condition if $n = 0$.

\subsection{Well posedness}
\label{ss:wellposedness}

We now show the existence of a unique solution to the space-time HDG
method \cref{eq:HDGProblem}.

\begin{theorem}[Existence and uniqueness]
  \label{thm:ex_uniq}
  A unique solution
  $\del{\boldsymbol{q}_h, v_h, \lambda_h} \in \boldsymbol{V}_h \times
  W_h \times M_h$ to \cref{eq:HDGProblem} exists if the stabilization
  parameter $\tau$ is positive.
\end{theorem}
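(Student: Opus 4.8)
The plan is to exploit that \cref{eq:HDGProblem} is a square linear system on the finite-dimensional space $\boldsymbol{V}_h \times W_h \times M_h$, so that existence and uniqueness are equivalent and both follow once we show that the only solution of the homogeneous problem (with $\boldsymbol{q}_h^- = 0$ and $\lambda_h^- = 0$) is the trivial one. I would therefore assume a vanishing right-hand side and aim to prove $\boldsymbol{q}_h = 0$, $v_h = 0$, and $\lambda_h = 0$.

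First I would derive an energy identity by taking the test functions equal to the trial functions, $\del[1]{\boldsymbol{r}_h, w_h, \mu_h} = \del[1]{\boldsymbol{q}_h, v_h, \lambda_h}$, and adding \cref{eq:HDG_q_equation,eq:HDG_v_equation,eq:HDG_lambda_equation}. The terms $\del[1]{v_h, f_n \nabla\cdot\boldsymbol{q}_h}_{\mathcal{T}^n}$ cancel between the first two equations, and the facet terms $\langle\lambda_h, \boldsymbol{q}_h\cdot\boldsymbol{n} f_n\rangle_{\mathcal{F}_{\mathcal{Q}}^n}$ cancel between the first and third, while the stabilization contributions combine into $\langle\tau(v_h-\lambda_h), (v_h-\lambda_h)f_n\rangle_{\mathcal{F}_{\mathcal{Q}}^n}$, which is nonnegative because $\tau > 0$ and $f_n > 0$. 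For the two remaining pairs of time-derivative terms I would integrate by parts in $x_0$ on each space-time element, using that $f_n$ depends only on time with $f_n' = -\alpha f_n$. Combined with the explicit face term on $\mathcal{F}_{\Omega}^n(t_{n+1})$, the volume contribution $-\del[1]{\boldsymbol{q}_h, f_n\partial_t\boldsymbol{q}_h}_{\mathcal{T}^n} - \del[1]{\boldsymbol{q}_h,\boldsymbol{q}_h f_n'}_{\mathcal{T}^n}$ produces nonnegative weighted $L^2$ contributions of $\boldsymbol{q}_h$ on the temporal faces at $t_n$ and $t_{n+1}$ together with the coercive term $\tfrac{\alpha}{2}\del[1]{|\boldsymbol{q}_h|^2, f_n}_{\mathcal{T}^n}$, and the analogous surface terms in \cref{eq:HDG_lambda_equation} produce the corresponding nonnegative expression in $\lambda_h$ together with $\tfrac{\alpha}{2}\langle\lambda_h^2, f_n\rangle_{\mathcal{F}_{\mathcal{S}}^n}$. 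Since the right-hand side vanishes, this sum of manifestly nonnegative terms equals zero, so each vanishes: the coercive volume term forces $\boldsymbol{q}_h = 0$, the stabilization term forces $v_h = \lambda_h$ on every face of $\mathcal{F}_{\mathcal{Q}}^n$, and the surface term forces $\lambda_h = 0$ on $\mathcal{F}_{\mathcal{S}}^n$.

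The energy identity by itself does not yet control $v_h$ in the interior nor $\lambda_h$ away from the free surface, and recovering these is the crux. With $\boldsymbol{q}_h = 0$ I would return to \cref{eq:HDG_q_equation}, which reduces to $\del[1]{v_h, f_n\nabla\cdot\boldsymbol{r}_h}_{\mathcal{T}^n} - \langle\lambda_h, \boldsymbol{r}_h\cdot\boldsymbol{n} f_n\rangle_{\mathcal{F}_{\mathcal{Q}}^n} = 0$ for all $\boldsymbol{r}_h$. Integrating the first term by parts in space element by element, no temporal-face contributions arise because the spatial normal vanishes on $\Omega_n$ and $\Omega_{n+1}$; using $v_h = \lambda_h$ on the faces then cancels the surviving boundary terms and leaves $\del[1]{\nabla v_h, f_n\boldsymbol{r}_h}_{\mathcal{T}^n} = 0$. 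Because the spatial gradient of $W_h$ lies in $\boldsymbol{V}_h$, the choice $\boldsymbol{r}_h = \nabla v_h$ together with $f_n > 0$ gives $\nabla v_h = 0$, so $v_h$ is spatially constant on each element. The interface condition $v_h = \lambda_h$ makes this value continuous across faces, so by connectedness of $\Omega$ it reduces to a single function of time shared by all elements, while $\lambda_h = 0$ on $\mathcal{F}_{\mathcal{S}}^n$ forces that time-function to vanish on the free-surface faces. Hence $v_h = 0$ throughout the slab and $\lambda_h = v_h = 0$ on all of $\mathcal{F}_{\mathcal{Q}}^n$, which completes the uniqueness argument.

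The main obstacle I anticipate is precisely this last step: the weighted energy estimate is coercive in $\boldsymbol{q}_h$ but only controls $v_h-\lambda_h$ and the free-surface trace of $\lambda_h$, so establishing $v_h = 0$ requires the separate spatial argument above and careful bookkeeping of which boundary terms survive integration by parts. The two features that make it work are the inclusion $\nabla W_h \subset \boldsymbol{V}_h$, which makes $\boldsymbol{r}_h = \nabla v_h$ admissible, and the free surface acting as a Dirichlet-type anchor for the otherwise undetermined additive constant. I would verify the trace cancellations and the propagation of the constant through the mesh connectivity with some care, since these are the places where a sign or normal-orientation slip is easiest to make.
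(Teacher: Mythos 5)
Your proposal is correct and follows essentially the same route as the paper's own proof: the identical energy argument (test functions equal to trial functions, integration by parts in time, positivity of $f_n$, negativity of $f_n' = -\alpha f_n$, and $\tau>0$) yielding $\boldsymbol{q}_h=0$, $v_h=\lambda_h$ on the facets, and $\lambda_h=0$ on the free surface, followed by the same return to the $\boldsymbol{q}$-equation and spatial integration by parts to conclude $\nabla v_h=0$ and hence $v_h=\lambda_h=0$. If anything, your final step is slightly more explicit than the paper's, since you spell out the propagation of the element-wise spatial constant across interior faces via $v_h=\lambda_h$ and connectedness of $\Omega$, which the paper asserts more tersely.
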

\begin{proof}
  It is sufficient to show that if the data is equal to zero, the only
  solution to \cref{eq:HDGProblem} is the trivial one. We only need to
  show this for an arbitrary space-time slab $\mathcal{E}^n$ assuming
  $\lambda_h^- = 0$ and $\boldsymbol{q}_h^- = 0$.

  Take $\boldsymbol{r}_h = \boldsymbol{q}_h$ in
  \cref{eq:HDG_q_equation}, $w_h = v_h$ in \cref{eq:HDG_v_equation}
  and $\mu_h = \lambda_h$ in \cref{eq:HDG_lambda_equation}, and add
  the three equations together:
  \begin{multline}
    \label{eq:test_equals_trial}
    - \del[1]{\boldsymbol{q}_h, f_n\partial_t \boldsymbol{q}_h}_{\mathcal{T}^n}
    - \del[1]{\, \envert{\boldsymbol{q}_h}^2,  f'_n}_{\mathcal{T}^n}
    + \langle\, \envert{\boldsymbol{q}_h}^2, f_n \rangle_{\mathcal{F}_{\Omega}^n(t_{n+1})}
    \\
    + \langle \tau\del{v_h - \lambda_h}^2, f_n \rangle_{\mathcal{F}_{\mathcal{Q}}^n}
    - \langle \lambda_h f_n, \partial_t \lambda_h \rangle_{\mathcal{F}_{\mathcal{S}}^n}
    - \langle \lambda_h^2, f'_n \rangle_{\mathcal{F}_{\mathcal{S}}^n}
    \\
    + \llangle \lambda_h^2, f_n \rrangle_{\partial \mathcal{E}_{\mathcal{S}}^n(t_{n+1})} = 0.
  \end{multline}
  Note that since
  $\boldsymbol{q}_h \cdot \partial_t\boldsymbol{q}_h =
  \tfrac{1}{2}\partial_t(\,\envert{\boldsymbol{q}_h}^2)$, we may write the first
  term on the left hand side, after integration by parts in time, as
  \begin{multline}
    \label{eq:firsttermlhs}
    - \del[1]{\boldsymbol{q}_h, f_n\partial_t \boldsymbol{q}_h}_{\mathcal{T}^n}
    = \tfrac{1}{2}\del[1]{\, \envert{\boldsymbol{q}_h}^2, f'_n}_{\mathcal{T}^n}
    - \tfrac{1}{2}\langle\, \envert{\boldsymbol{q}_h}^2, f_n \rangle_{\mathcal{F}_{\Omega}^n(t_{n+1})}
    \\
    + \tfrac{1}{2}\langle\, \envert{\boldsymbol{q}_h}^2, f_n \rangle_{\mathcal{F}_{\Omega}^n(t_{n})}.
  \end{multline}
  Similarly, the fifth term on the left hand side of
  \cref{eq:test_equals_trial} may be written as
  \begin{multline}
    \label{eq:fifthtermlhs}
    - \langle \lambda_h f_n, \partial_t \lambda_h \rangle_{\mathcal{F}_{\mathcal{S}}^n}
    = \tfrac{1}{2}\langle \lambda_h^2, f'_n \rangle_{\mathcal{F}_{\mathcal{S}}^n}
    - \tfrac{1}{2}\llangle \lambda_h^2, f_n \rrangle_{\partial \mathcal{E}_{\mathcal{S}}^n(t_{n+1})}
    \\
    + \tfrac{1}{2}\llangle \lambda_h^2, f_n \rrangle_{\partial \mathcal{E}_{\mathcal{S}}^n(t_{n})}.
  \end{multline}
  Combining \cref{eq:test_equals_trial}--\cref{eq:fifthtermlhs}, we
  obtain
  \begin{multline}
    \label{eq:test_equals_trial2}
    0 =
    - \tfrac{1}{2}\del[1]{\, \envert{\boldsymbol{q}_h}^2, f'_n}_{\mathcal{T}^n}
    + \tfrac{1}{2}\langle\, \envert{\boldsymbol{q}_h}^2, f_n \rangle_{\mathcal{F}_{\Omega}^n(t_{n+1})}
    \\
    + \tfrac{1}{2}\langle\, \envert{\boldsymbol{q}_h}^2, f_n \rangle_{\mathcal{F}_{\Omega}^n(t_{n})}
    + \langle \tau\del{v_h - \lambda_h}^2,  f_n \rangle_{\mathcal{F}_{\mathcal{Q}}^n}
    \\
    - \tfrac{1}{2}\langle \lambda_h^2, f'_n \rangle_{\mathcal{F}_{\mathcal{S}}^n}
    + \tfrac{1}{2} \llangle \lambda_h^2, f_n \rrangle_{\partial \mathcal{E}_{\mathcal{S}}^n(t_{n+1})}
    + \tfrac{1}{2} \llangle \lambda_h^2, f_n \rrangle_{\partial \mathcal{E}_{\mathcal{S}}^n(t_{n})}.
  \end{multline}
  Since $\tau > 0$, $f_n > 0, \forall t$, and
  $f'_n < 0, \forall t$, we conclude that
  $\boldsymbol{q}_h = 0$ in $\mathcal{E}^n$ and on
  $K_j^n \cup K_j^{n+1}$,
  $v_h = \lambda_h$ on $\Gamma_{\mathcal{Q}}^n$, and
  $\lambda_h = 0$ on $\Gamma_{\mathcal{S}}^n$ and on
  $\partial\mathcal{E}_{\mathcal{S}}^n(t_n) \cup
  \partial\mathcal{E}_{\mathcal{S}}^n(t_{n+1})$.
  Substituting into \cref{eq:HDG_q_equation},
  \begin{equation}
    \begin{split}
    0  
    &=
    \del[1]{v_h, f_n \nabla \cdot \boldsymbol{r}_h}_{\mathcal{T}^n}
    - \langle \lambda_h, \boldsymbol{r}_h\cdot \boldsymbol{n} f_n \rangle_{\mathcal{F}_{\mathcal{Q}}^n}
    \\
    &=
    -\del[1]{\nabla v_h, f_n \boldsymbol{r}_h}_{\mathcal{T}^n}
    + \langle \del{v_h - \lambda_h}, \boldsymbol{r}_h\cdot \boldsymbol{n} f_n \rangle_{\mathcal{F}_{\mathcal{Q}}^n}
    \\
    &=
    -\del[1]{\nabla v_h, f_n \boldsymbol{r}_h}_{\mathcal{T}^n},
    \end{split}
  \end{equation}
  which holds for all $\boldsymbol{r}_h \in \boldsymbol{V}_h$. Since
  $f_n > 0$ we conclude that $\nabla v_h = 0$ in $\mathcal{E}^n$,
  implying that $v_h$ depends only on time. To show that $v_h = 0$ in
  $\mathcal{E}^n$, consider the following.
  Since $v_h$ depends only on time, then $v_h$ is constant on $\Omega \times \cbr[0]{t}$, for all $t \in (t_n, t_{n+1})$. In addition, $v_h = \lambda_h = 0$ on $\Gamma_{\mathcal{S}}^n$, so $v_h = 0$ on $\mathcal{E}^n$.

\end{proof}

%------------------------------------------------------------------------------
\section{Analysis tools}
\label{sec:analysistools}

In this section, we develop some of the tools needed to perform the
error analysis in \cref{sec:erroranalysis}.

%------------------------------------------------------------------------------
\subsection{Notation and anisotropic Sobolev spaces}
\label{ss:notation_spaces}

Let the multi-index $\alpha$ be a vector of non-negative integers
$\alpha_i$ and let $\abs{\alpha}$ be
defined as $\abs{\alpha} = \sum_i \alpha_i$. By $D^{\alpha}v$ we
denote the partial derivative of order $\envert{\alpha}$ of $v$, i.e.,
\begin{equation}
  D^{\alpha}v = \partial_{x_0}^{\alpha_0}\partial_{x_1}^{\alpha_1}\partial_{x_2}^{\alpha_2}v.
\end{equation}
We define the Sobolev space
$H^s(\Omega) = \cbr[1]{v \in L^2(\Omega) : D^{\alpha}v \in L^2(\Omega)
  \text{ for } \abs{\alpha} \leq s}$. This space is equipped with the
following norm and seminorm:
\begin{equation}
  \norm[0]{v}^2_{H^s(\Omega)} = \sum_{\abs{\alpha} \leq s}\norm[0]{D^{\alpha}v}^2_{L^2(\Omega)}
  \qquad \text{ and } \qquad
  \abs[0]{v}^2_{H^s(\Omega)} = \sum_{\abs{\alpha} = s}\norm[0]{D^{\alpha}v}^2_{L^2(\Omega)}.
\end{equation}
For $\alpha_{i} \geq 0$, $i = 0,1,2$, we introduce the
anisotropic Sobolev space of order $(s_t, s_s)$ on
$\mathcal{E} \subset \mathbb{R}^3$ by
\begin{equation}
  H^{(s_t,s_s)}(\mathcal{E}) = \cbr[1]{v \in L^2(\mathcal{E}) : D^{(\alpha_t,\alpha_s)} v \in L^2(\mathcal{E})
    \text{ for } \alpha_t \leq s_t,\, \envert[0]{\alpha_s} \leq s_s},
\end{equation}
where $\alpha_t = \alpha_0$ and $\alpha_s = (\alpha_{1}, \alpha_{2})$
and
$D^{(\alpha_t,\alpha_s)} v =
\partial_{x_0}^{\alpha_t}\partial_{x_1}^{\alpha_1}\partial_{x_2}^{\alpha_2}v$. The
anisotropic Sobolev norm and seminorm are given by, respectively,
\begin{equation}
  \norm[0]{v}^2_{H^{(s_t, s_s)}(\mathcal{E})} = \sum_{\substack{\alpha_t \leq s_t \\ \abs{\alpha_s} \leq s_s}}\norm[0]{D^{(\alpha_t,\alpha_s)} v}^2_{L^2(\mathcal{E})}
  \text{ and }
  \abs[0]{v}^2_{H^{(s_t, s_s)}(\mathcal{E})} = \sum_{\substack{\alpha_t = s_t \\ \abs{\alpha_s} = s_s}}\norm[0]{D^{(\alpha_t,\alpha_s)} v}^2_{L^2(\mathcal{E})}.  
\end{equation}

Let $\mathcal{K}$ denote any space-time prism that is constructed as
$\mathcal{K} = K \times I$, where $K$ is a spatial triangular
element and $I$ is an interval. Let $h_K$ and $\rho_K$ denote, respectively, the radii of the
2-dimensional circumcircle and inscribed circle of $K$. We will assume
spatial shape regularity, i.e., we assume there exists a constant
$c_r > 0$ such that
\begin{equation}
	\label{eq:shape_reg}
	\frac{h_K}{\rho_K} \leq c_r, \quad \forall \mathcal{K} \in \mathcal{T}^n.
\end{equation}
Additionally, we assume that $\mathcal{T}^n$ does not have any hanging
nodes. Throughout the remainder of this paper we will denote by
$C > 0$ a generic constant that is independent of $h_K$ and
$\Delta t$.

%------------------------------------------------------------------------------
\subsection{Space-time mappings}
\label{ss:space_time_mappings}

Let $\widehat{K}$ denote the reference triangle defined by the
vertices $(0,0), \,(1,0), \,(0,1)$, and with reference coordinates
$\widehat{\boldsymbol{x}} = (\widehat{x}_1,
\widehat{x}_2)$. Furthermore, let $H_K$ denote the affine mapping
$H_K : \widehat{K} \rightarrow K$ defined as
$H_K(\widehat{\boldsymbol{x}}) = B_K \widehat{\boldsymbol{x}} +
\boldsymbol{c}$, where $B_K$ is a matrix and $\boldsymbol{c}$ is a
vector.

To construct each space-time prism we follow a similar approach as
\cite{Sudirham:2006}. Consider a reference prism
$\widehat{\mathcal{K}}$ defined by the vertices
$(-1,0,0), \,(-1,1,0), \,(-1,0,1), \,(1,0,0), \,(1,1,0),
\,(1,0,1)$. The reference coordinates in $\widehat{\mathcal{K}}$ are
denoted by $(\widehat{x}_0, \widehat{\boldsymbol{x}})$. The space-time
prism $\mathcal{K}$ is obtained as follows. First, we construct an
intermediate element $\check{\mathcal{K}}$ from an affine mapping
$F_{\mathcal{K}} : \widehat{\mathcal{K}} \mapsto \check{\mathcal{K}}$
defined as
$F_{\mathcal{K}}(\widehat{x}_0, \widehat{\boldsymbol{x}}) =
A_{\mathcal{K}}\sbr[0]{\widehat{x}_0, \widehat{\boldsymbol{x}}}^T +
\boldsymbol{b}$, where
$A_{\mathcal{K}} = \text{diag}\del[1]{\frac{\Delta t}{2}, 1, 1}$ and
$\boldsymbol{b}$ is a vector of the form $[b_0, 0, 0]^T$. The
coordinates on $\check{\mathcal{K}}$ are
$(\check{x}_0, \check{\boldsymbol{x}})$. Then, $\mathcal{K}$ is
obtained via the affine mapping,
$G_{\mathcal{K}} : \check{\mathcal{K}} \mapsto \mathcal{K}$ defined
as:
\begin{equation}
  G_{\mathcal{K}}(\check{x}_0, \check{\boldsymbol{x}}) =
  \begin{bmatrix}
    1 & \boldsymbol{0} \\
    \boldsymbol{0}^T & B_K
  \end{bmatrix}
  \begin{bmatrix}
    \check{x}_0 \\ \check{\boldsymbol{x}}
  \end{bmatrix}
  +
  \begin{bmatrix}
    0 \\ \boldsymbol{c}
  \end{bmatrix},
\end{equation}
where $\boldsymbol{0} = [0, 0]$ and $B_K$ denotes the matrix
associated with the mapping $H_K$ defined above. See
\cref{fig:space_time_mappings}.

We denote by $\partial \widehat{\mathcal{K}}_1$ the boundary face of
$\widehat{\mathcal{K}}$ with $\widehat{x}_1 = 0$. Similarly,
$\partial \widehat{\mathcal{K}}_2$ and
$\partial \widehat{\mathcal{K}}_3$ are the boundary faces of
$\widehat{\mathcal{K}}$ with, respectively, $\widehat{x}_2 = 0$ and
$\widehat{x}_1 + \widehat{x}_2 = 1$. By
$\partial\widehat{\mathcal{K}}_0$ we denote the boundary faces of
$\widehat{\mathcal{K}}$ with $\widehat{x}_0 = -1$ and
$\widehat{x}_0 = 1$. Furthermore, $\partial \check{\mathcal{K}}_i$,
$i = 0,1,2,3$, will denote the boundary faces of $\check{\mathcal{K}}$
which are obtained by applying the transformation $F_{\mathcal{K}}$ to
$\widehat{\mathcal{K}}$.

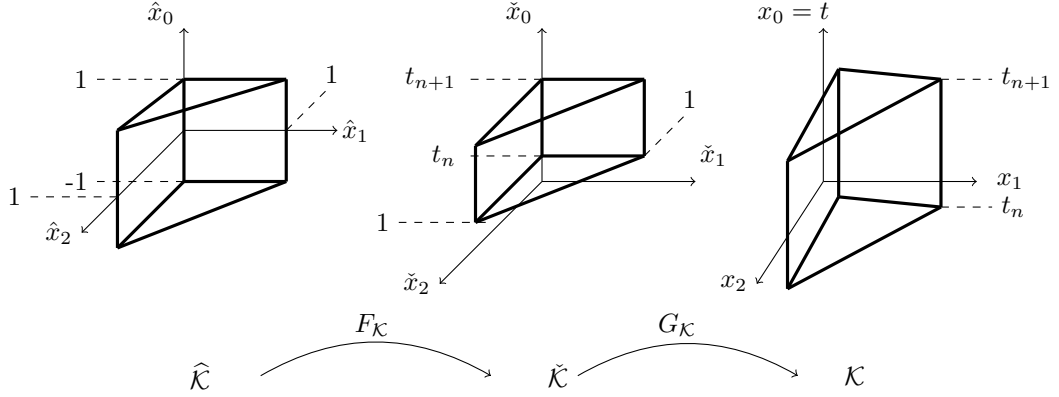
\begin{figure}[tbp]
  \begin{center}
    \begin{tikzpicture}[important line/.style={very thick}, scale=0.68]
      
      % \widehat{\mathcal{K}}
      \tikzmath{\x1 = 0; \y1 = 0;};
      \draw[important line] (\x1,\y1) -- (\x1+2,\y1);
      \draw[important line] (\x1,\y1) -- (\x1,\y1+2);
      \draw[important line] (\x1,\y1) -- (\x1-1.3,\y1-1.3);
      \draw[important line] (\x1-1.3,\y1-1.3) -- (\x1+2,\y1);
      \draw[important line] (\x1,\y1+2) -- (\x1+2,\y1+2);
      \draw[important line] (\x1+2,\y1) -- (\x1+2,\y1+2);
      \draw[important line] (\x1,\y1+2) -- (\x1-1.3,\y1+1);
      \draw[important line] (\x1-1.3,\y1-1.3) -- (\x1-1.3,\y1+1);
      \draw[important line] (\x1-1.3,\y11) -- (\x1+2,\y1+2);
      
      \node[left] at (\x1-1.7,\y1) {-1};
      \draw[dashed] (\x1,\y1) -- (\x1-1.7,\y1);
      \node[left] at (\x1-1.7,\y1+2) {1};
      \draw[dashed] (\x1,\y1+2) -- (\x1-1.7,\y1+2);	
      \node[left] at (\x1-1.3-1.7,\y1-0.3) {1};
      \draw[dashed] (\x1-1.3,\y1-0.3) -- (\x1-1.3-1.7,\y1-0.3);	
      \node[left] at (\x1+2+1.2,\y1+2.1) {1};
      \draw[dashed] (\x1+2,\y1+1.0) -- (\x1+2+0.8,\y1+1.8);	
      % tags for nodes
      % \node[left] at (-1.3,-1){$\hat{x}_2 = 1$};
      % \node[left] at (0,2.2){$\hat{x}_0 = 1$};
      % \node[left] at (3,-0.5){$\hat{x}_1 = 1$};
      % \node[right] at (\x1,\y1+0.3){$(0,0,0)$};
      % \node[left] at (\x1+3.7,\y1){$(0,1,0)$};
      % \node[left] at (\x1-1.3,\y1-1){$(0,0,1)$};
      % 
      % \node[left] at (\x1,\y1+2.2){$(1,0,0)$};
      % \node[left] at (\x1+3.7,\y1+2){$(1,1,0)$};
      % \node[left] at (\x1-1.3,\y1+1){$(1,0,1)$};
            
      % axes
      \draw[->] (\x1,\y1+1.0) -- (\x1,\y1+3) ;
      \draw[->] (\x1,\y1+1.0) -- (\x1+3,\y1+1.0) ;
      \draw[->] (\x1,\y1+1.0) -- (\x1-2,\y1-1.0) ;
      
      % \draw[important line]  (0,6)--(0,0) -- (6,0) ;
      
      % tags for axes
      \node[left] at (\x1, \y1+3.3){$\hat{x}_0$};
      \node[left] at (\x1+3.8, \y1+1.0){$\hat{x}_1$};
      \node[left] at (\x1-2,\y1-1){$\hat{x}_2$};
      
      % element label
      \node[left] (1) at (\x1+0.7,\y1-3.8){$\widehat{\mathcal{K}}$};
      
      %%%%%%%%%%%%%%%%%%%%%%%%%%%%%%%%%%%%%%%%%%%%%%%%%%%%%%%%%%%%%%%% 
      % \check{\mathcal{K}}
      \tikzmath{\x2 = \x1+7; \y2 = \y1+0.5;};
      \draw[important line] (\x2,\y2) -- (\x2+2,\y2);
      \draw[important line] (\x2-1.3,\y2-1.3) -- (\x2+2,\y2);
      \draw[important line] (\x2,\y2) -- (\x2-1.3,\y2-1.3);
      
      \draw[important line] (\x2,\y2+1.5) -- (\x2+2,\y2+1.5);
      \draw[important line] (\x2-1.3,\y2-1.3+1.5) -- (\x2+2,\y2+1.5);
      \draw[important line] (\x2,\y2+1.5) -- (\x2-1.3,\y2-1.3+1.5);
      
      \draw[important line] (\x2-1.3,\y2-1.3) -- (\x2-1.3,\y2-1.3+1.5);
      \draw[important line] (\x2+2,\y2+1.5) -- (\x2+2,\y2);
      \draw[important line] (\x2,\y2) -- (\x2,\y2+1.5);
      
      % tags for nodes
      % \node[right] at (\x2,\y2+0.3){$(t_n,0,0)$};
      % \node[left] at (\x2+3.8,\y2){$(t_n,1,0)$};
      % \node[left] at (\x2-1.3,\y2-1){$(t_n,0,1)$};
      % 
      % \node[left] at (\x2,\y2+2.2){$(t_{n+1},0,0)$};
      % \node[left] at (\x2+4.3,\y2+2){$(t_{n+1},1,0)$};
      % \node[left] at (\x2-1.3,\y2+1){$(t_{n+1},0,1)$};
      
      \node[left] at (\x2-1.5,\y2) {$t_{n}$};
      \draw[dashed] (\x2,\y2) -- (\x2-1.5,\y2);
      \node[left] at (\x2-1.5,\y2+1.5) {$t_{n+1}$};
      \draw[dashed] (\x2,\y2+1.5) -- (\x2-1.5,\y2+1.5);
      \node[left] at (\x2-1.1-1.7,\y2-1.3) {1};
      \draw[dashed] (\x2-1.1,\y2-1.3) -- (\x2-1.1-1.7,\y2-1.3);	
      \node[left] at (\x2+2+1.2,\y2+1.1) {1};
      \draw[dashed] (\x2+2,\y2) -- (\x2+2+0.8,\y2+0.8);	
      
      % axes
      \draw[->] (\x2,\y1) -- (\x2,\y1+3) ;
      \draw[->] (\x2,\y1) -- (\x2+3,\y1) ;
      \draw[->] (\x2,\y1) -- (\x2-2,\y1-2) ;
      
      % tags for axes
      \node[left] at (\x2, \y2+2.8){$\check{x}_0$};
      \node[left] at (\x2+3.8, \y2){$\check{x}_1$};
      \node[left] at (\x2-2,\y1-2){$\check{x}_2$};
      
      % element label
      \node[left] (2) at (\x2+0.7,\y1-3.8){$\check{\mathcal{K}}$};
      
      %%%%%%%%%%%%%%%%%%%%%%%%%%%%%%%%%%%%%%%%%%%%%%%%%%%%%%%%%%%%%%%% 
      % \mathcal{K}
      \tikzmath{\x3 = \x2+5.5+0.3; \y3 = \y1-0.3;};
      
      \draw[important line] (\x3,\y3) -- (\x3+2,\y3-0.2);
      \draw[dashed] (\x3+2, \y3-0.2) -- (\x3+3, \y3-0.2);
      \node[right] at (\x3+3, \y3-0.2){$t_n$};
      \draw[important line] (\x3,\y3) -- (\x3-1,\y3-1.8);
      \draw[important line] (\x3-1,\y3-1.8) -- (\x3+2,\y3-0.2);
      
      \draw[important line] (\x3,\y3+2.5) -- (\x3+2,\y3-0.2+2.5);
      \draw[dashed] (\x3+2, \y3-0.2+2.5) -- (\x3+3, \y3-0.2+2.5);
      \node[right] at (\x3+3, \y3-0.2+2.5){$t_{n+1}$};
      \draw[important line] (\x3,\y3+2.5) -- (\x3-1,\y3-1.8+2.5);
      \draw[important line] (\x3-1,\y3-1.8+2.5) -- (\x3+2,\y3-0.2+2.5);
      
      \draw[important line] (\x3,\y3) -- (\x3,\y3+2.5);
      \draw[important line] (\x3-1,\y3-1.8) -- (\x3-1,\y3-1.8+2.5);
      \draw[important line] (\x3+2,\y3-0.2) -- (\x3+2,\y3-0.2+2.5);
      
      % \tikzmath{\x3 = \x2+5.5; \y3 = \y2;};
      % 
      % 
      % \draw[important line] (\x3,\y3) -- (\x3+2,\y3-0.2);
      % \draw[important line] (\x3,\y3) -- (\x3-1,\y3-1.8);
      % \draw[important line] (\x3-1,\y3-1.8) -- (\x3+2,\y3-0.2);
      % 
      % \draw[important line] (\x3,\y3+1.5) -- (\x3+2,\y3-0.2+1.5);
      % \draw[important line] (\x3,\y3+1.5) -- (\x3-1,\y3-1.8+1.5);
      % \draw[important line] (\x3-1,\y3-1.8+1.5) -- (\x3+2,\y3-0.2+1.5);
      % 
      % \draw[important line] (\x3,\y3) -- (\x3,\y3+1.5);
      % \draw[important line] (\x3-1,\y3-1.8) -- (\x3-1,\y3-1.8+1.5);
      % \draw[important line] (\x3+2,\y3-0.2) -- (\x3+2,\y3-0.2+1.5);
      
      % axes
      \draw[->] (\x2+5.5,\y1) -- (\x2+5.5,\y1+3) ;
      \draw[->] (\x2+5.5,\y1) -- (\x2+5.5+3,\y1) ;
      \draw[->] (\x2+5.5,\y1) -- (\x2+5.5-1.3,\y1-2) ;
      
      % tags for axes		
      \node[left] at (\x3, \y1+3.3){$x_0 = t$};
      \node[left] at (\x3+3.8, \y1){$x_1$};
      \node[left] at (\x2+5.5-1.3,\y1-2){$x_2$};
      
      % element label
      \node[left] (3) at (\x3+0.7,\y1-3.8){$\mathcal{K}$};
      
      \draw[->] (1.5,-3.8) to [bend left,looseness=1.0] (6,-3.8);
      \draw[->] (7.7,-3.8) to [bend left,looseness=1.0] (12,-3.8);
      
      \node[left] at (4.2,-2.8) {$F_{\mathcal{K}}$};
      \node[left] at (10.2,-2.8) {$G_{\mathcal{K}}$};
    \end{tikzpicture}    
  \end{center}
  \caption{An illustration of the different space-time mappings.}
  \label{fig:space_time_mappings}
\end{figure}
% 
% ------------------------------------------------------------------------------
\subsection{Trace and inverse trace identities}
\label{ss:scaling_identities}

In this section we prove anisotropic trace and inverse trace
identities. This is achieved by first considering different scaling
identities. Similar identities were shown on hexahedra in
\cite{Georgoulis:2003, Sudirham:2006}, but are modified here for
prisms.

\begin{lemma}
  \label{lem:identities_Khat_to_checkK}
  Let $\check{u} \in H^{(s_t, s_s)}(\check{\mathcal{K}})$,
  $\alpha_t = \alpha_0$, $\alpha_s = (\alpha_1, \alpha_2)$, and
  $\alpha = (\alpha_t, \alpha_s)$. Then, the following identities hold
  for $\alpha = (\alpha_t, \alpha_s)$, $\alpha_i \geq 0$, $i = 0,1,2$,
  \begin{subequations}
    \begin{align}
      \label{eq:scaling_element}
      \norm[0]{\check{D}^{\alpha}\check{u}}_{\check{\mathcal{K}}}^2
      & = \del{\dfrac{2}{\Delta t}}^{2\alpha_0 - 1} \norm[0]{\widehat{D}^{\alpha}\widehat{u}}_{\widehat{\mathcal{K}}}^2,        
      \\
      \label{eq:scaling_K0}
      \norm[0]{\check{D}^{\alpha}\check{u}}_{\partial \check{\mathcal{K}}_0}^2
      & = \del{\dfrac{2}{\Delta t}}^{2\alpha_0} \norm[0]{\widehat{D}^{\alpha}\widehat{u}}_{\partial \widehat{\mathcal{K}}_0}^2,        
      \\
      \label{eq:scaling_Ki}
      \norm[0]{\check{D}^{\alpha}\check{u}}_{\partial \check{\mathcal{K}}_j}^2
      & = \del{\dfrac{2}{\Delta t}}^{2\alpha_0 - 1} \norm[0]{\widehat{D}^{\alpha}\widehat{u}}_{\partial \widehat{\mathcal{K}}_j}^2,
        \quad j = 1,2,3,        
    \end{align}
  \end{subequations}
  where $\widehat{u} = \check{u} \circ F_{\mathcal{K}}$.
\end{lemma}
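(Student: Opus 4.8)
The map $F_{\mathcal{K}}$ is affine and diagonal, $F_{\mathcal{K}}(\widehat{x}_0,\widehat{\boldsymbol{x}}) = A_{\mathcal{K}}[\widehat{x}_0,\widehat{\boldsymbol{x}}]^T + \boldsymbol{b}$ with $A_{\mathcal{K}} = \text{diag}(\tfrac{\Delta t}{2},1,1)$. The key reason the identities split temporal from spatial scaling is that $A_{\mathcal{K}}$ acts as the scalar $\tfrac{\Delta t}{2}$ only in the $x_0$-direction and as the identity in the spatial directions. So let me sketch the argument.
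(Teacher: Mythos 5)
Your proposal stops exactly where the proof needs to begin: it records the correct starting observation (that $F_{\mathcal{K}}$ is affine with diagonal matrix $A_{\mathcal{K}} = \mathrm{diag}(\tfrac{\Delta t}{2},1,1)$, so only the temporal direction is rescaled), but it never carries out any computation, so none of the three identities is actually established. The real content of the lemma lies in two quantitative ingredients that your plan does not supply. First, the chain rule must be made explicit: each time derivative picks up one factor of $\tfrac{2}{\Delta t}$, so that $\check{D}^{\alpha}\check{u} = \bigl(\tfrac{2}{\Delta t}\bigr)^{\alpha_0}\,\widehat{D}^{\alpha}(\check{u}\circ F_{\mathcal{K}})\circ F_{\mathcal{K}}^{-1}$, which after squaring contributes $\bigl(\tfrac{2}{\Delta t}\bigr)^{2\alpha_0}$ in every case. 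Second, and this is what your sketch misses entirely, the change-of-variables Jacobian is \emph{different on each type of integration domain}, and this is precisely what distinguishes the three exponents: on the element $\check{\mathcal{K}}$ the volume Jacobian is $|\det A_{\mathcal{K}}| = \tfrac{\Delta t}{2}$, turning $\bigl(\tfrac{2}{\Delta t}\bigr)^{2\alpha_0}$ into $\bigl(\tfrac{2}{\Delta t}\bigr)^{2\alpha_0-1}$; on the time-level faces $\partial\check{\mathcal{K}}_0$ time is frozen, the face is parametrized by the (unrescaled) spatial variables alone, the surface Jacobian is $1$, and the exponent stays $2\alpha_0$; on the lateral faces $\partial\check{\mathcal{K}}_j$, $j=1,2,3$, the face is parametrized by $(\widehat{x}_0,\widehat{x}_2)$ (or the analogous pair), the induced face map has Jacobian determinant $\tfrac{\Delta t}{2}$, and the exponent again drops to $2\alpha_0-1$.

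Without these case-by-case Jacobian computations there is no way to see why \cref{eq:scaling_K0} has a different power of $\tfrac{2}{\Delta t}$ than \cref{eq:scaling_element} and \cref{eq:scaling_Ki}, which is the entire point of the lemma (and what later makes the trace estimates anisotropic in $\Delta t$ versus $h_K$). The paper's proof is exactly the execution of the plan you wrote down: chain rule, then three changes of variables with the three different Jacobians, including the construction of the explicit face mapping $J_{\partial\mathcal{K}}(\widehat{x}_0,\widehat{x}_2) = \mathrm{diag}(\tfrac{\Delta t}{2},1)[\widehat{x}_0,\widehat{x}_2]^T + \boldsymbol{d}$ for the lateral faces. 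So your approach is the right one, but as submitted it is a statement of intent rather than a proof; you need to complete all three computations.
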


\begin{proof}
  Note that by the chain rule,
  \begin{equation}
    \label{eq:chainrule}
    \check{D}^{\alpha}\check{u} = \del{\dfrac{2}{\Delta t}}^{\alpha_0} \widehat{D}^{\alpha}\del{\check{u} \circ F_{\mathcal{K}}}
    \circ F_{\mathcal{K}}^{-1}.
  \end{equation}
  We first show \cref{eq:scaling_element}. By \cref{eq:chainrule}
  \begin{equation}
    \norm[0]{\check{D}^{\alpha}\check{u}}_{\check{\mathcal{K}}}^2 = \del{\dfrac{2}{\Delta t}}^{2\alpha_0} \int_{\check{\mathcal{K}}}
    \del{\widehat{D}^{\alpha}\del{\check{u} \circ F_{\mathcal{K}}} \circ F_{\mathcal{K}}^{-1}}^2 \dif \check{x}_0 \dif \check{\boldsymbol{x}}.
  \end{equation}
  Changing variables, we obtain
  \begin{equation}
    \begin{split}
      \norm[0]{\check{D}^{\alpha}\check{u}}_{\check{\mathcal{K}}}^2 
      & =
      \del{\dfrac{2}{\Delta t}}^{2\alpha_0} \int_{\widehat{\mathcal{K}}} \del{\widehat{D}^{\alpha}\del{\check{u} \circ F_{\mathcal{K}}} \circ F_{\mathcal{K}}^{-1} \circ F_{\mathcal{K}}}^2
      \envert{\det A_{\mathcal{K}}} \dif \widehat{x}_0 \dif \widehat{\boldsymbol{x}} \\
      & =
      \del{\dfrac{2}{\Delta t}}^{2\alpha_0} \int_{\widehat{\mathcal{K}}} \del{\widehat{D}^{\alpha}\del{\check{u} \circ F_{\mathcal{K}}}}^2
      \envert{\det A_{\mathcal{K}}} \dif \widehat{x}_0 \dif \widehat{\boldsymbol{x}} \\
      & =
      \del{\dfrac{2}{\Delta t}}^{2\alpha_0} \frac{\Delta t}{2} \int_{\widehat{\mathcal{K}}} \del{\widehat{D}^{\alpha}\widehat{u}}^2
      \dif \widehat{x}_0 \dif \widehat{\boldsymbol{x}} \\
      & =
      \del{\dfrac{2}{\Delta t}}^{2\alpha_0 - 1}\norm[0]{\widehat{D}^{\alpha}\widehat{u}}_{\widehat{\mathcal{K}}}^2.
    \end{split}
  \end{equation}

  To show \cref{eq:scaling_K0} we note that time is fixed on
  $\partial \check{\mathcal{K}}_0$. Furthermore, since
  $\check{\boldsymbol{x}} = \widehat{\boldsymbol{x}}$ we note that for
  any function $\check{v}$ on $\check{\mathcal{K}}$ we have
  $\check{v}(t_n, \check{\boldsymbol{x}}) = \widehat{v}(1,
  \widehat{\boldsymbol{x}})$, i.e.,
  $\check{v}|_{\partial \check{\mathcal{K}}_0} =
  \widehat{v}|_{\partial \widehat{\mathcal{K}}_0}$. In particular, if
  $\check{v} = \check{D}^{\alpha}\check{u}$,
  \begin{equation}
    \label{eq:chain_rule_K0}
    \del[1]{\check{D}^{\alpha}\check{u}} |_{\partial \check{\mathcal{K}}_0}
    = \del{\frac{2}{\Delta t}}^{\alpha_0} \del[1]{\widehat{D}^{\alpha}\del[1]{\check{u} \circ F_{\mathcal{K}}}}|_{\partial \widehat{\mathcal{K}}_0}.
  \end{equation}
  Therefore,
  \begin{equation}
    \begin{split}
      \norm[0]{\check{D}^{\alpha}\check{u}}_{\partial \check{\mathcal{K}}_0}^2
      & =
      \int_{\partial \check{\mathcal{K}}_0} \del[1]{\check{D}^{\alpha}\check{u}}^2 \dif \check{\boldsymbol{x}}
      \\
      & =
      \del{\frac{2}{\Delta t}}^{2\alpha_0} \int_{\partial \widehat{\mathcal{K}}_0} \del[1]{\widehat{D}^{\alpha}\widehat{u}}^2 \dif \widehat{\boldsymbol{x}}
      \\
      & =
      \del{\dfrac{2}{\Delta t}}^{2\alpha_0} \norm[0]{\widehat{D}^{\alpha}\widehat{u}}_{\partial \widehat{\mathcal{K}}_0}^2,
    \end{split}
  \end{equation}
  which concludes the proof for \cref{eq:scaling_K0}.

  Finally, we prove \cref{eq:scaling_Ki} for $j = 1$. The proofs for
  $j = 2,3$ are analogous. First let us define the mapping
  $J_{\partial\mathcal{K}}$ that maps
  $\partial\widehat{\mathcal{K}}_1$ onto
  $\partial\check{\mathcal{K}}_1$. This mapping is given by
  \begin{equation}
    J_{\partial\mathcal{K}}(\widehat{x}_0,\widehat{x}_2) = 
    \begin{bmatrix}
      \frac{\Delta t}{2} & 0 \\
      0 & 1
    \end{bmatrix}
    \begin{bmatrix}
      \widehat{x}_0 \\ \widehat{x}_2
    \end{bmatrix}
    + \boldsymbol{d},
  \end{equation}
  where $\boldsymbol{d}$ is a constant two dimensional vector. Then,
  by the chain rule,
  \begin{equation}
    \label{eq:chain_rule_K1}
    \begin{split}
      \del[1]{\check{D}^{\alpha}\check{u}}|_{\partial\check{\mathcal{K}}_1}
      & = \del[1]{\check{D}^{\alpha}\check{u}}(\check{x}_0,0,\check{x}_2)		  
      \\
      & = \del{\frac{2}{\Delta t}}^{\alpha_0}\del{\widehat{D}^{\alpha}\del[1]{\check{u} \circ F_{\mathcal{K}}}}|_{\partial \widehat{\mathcal{K}}_1} \circ J_{\partial\mathcal{K}}^{-1}
      \\
      & = \del{\frac{2}{\Delta t}}^{\alpha_0}\sbr{\del{\widehat{D}^{\alpha}\del[1]{\check{u} \circ F_{\mathcal{K}}}}(\widehat{x}_0,0,\widehat{x}_2)} \circ J_{\partial\mathcal{K}}^{-1}.
    \end{split}
  \end{equation}
  We now find:
  \begin{equation}
    \begin{split}
      \norm[0]{\check{D}^{\alpha}\check{u}}_{\partial \check{\mathcal{K}}_1}^2
      & =
      \int_{\partial \check{\mathcal{K}}_1} \del[1]{\check{D}^{\alpha}\check{u}}^2 \dif \check{x}_0 \dif \check{x}_2
      \\
      & =
      \del{\frac{2}{\Delta t}}^{2\alpha_0}\int_{\partial \check{\mathcal{K}}_1} \del[2]{\sbr[1]{\del[1]{\widehat{D}^{\alpha}\del[1]{\check{u} \circ F_{\mathcal{K}}}}(\widehat{x}_0,0,\widehat{x}_2)} \circ J_{\partial\mathcal{K}}^{-1}}^2 \dif \check{x}_0 \dif \check{x}_2.
    \end{split}
  \end{equation}
  Note that the determinant of the Jacobian of
  $J_{\partial\mathcal{K}}$ is $\Delta t/2$. Changing variables,
  \begin{equation}
    \begin{split}
      \norm[0]{\check{D}^{\alpha}\check{u}}_{\partial \check{\mathcal{K}}_1}^2
      & =
      \del{\frac{2}{\Delta t}}^{2\alpha_0} \frac{\Delta t}{2} \int_{\partial 	\widehat{\mathcal{K}}_1} \del[1]{\del[0]{\widehat{D}^{\alpha}\widehat{u} }(\widehat{x}_0,0,\widehat{x}_2)}^2 \dif \widehat{x}_0 \dif \widehat{x}_2
      \\
      & = \del{\frac{2}{\Delta t}}^{2\alpha_0-1}\int_{\partial 	\widehat{\mathcal{K}}_1} \del[1]{\del[0]{\widehat{D}^{\alpha}\widehat{u} }(\widehat{x}_0,0,\widehat{x}_2)}^2 \dif \widehat{x}_0 \dif \widehat{x}_2
      \\
      & = \del{\frac{2}{\Delta t}}^{2\alpha_0-1}\norm[0]{\widehat{D}^{\alpha}\widehat{u}}_{\partial \widehat{\mathcal{K}}_1}^2.
    \end{split}
  \end{equation}
  The result follows.
\end{proof}

In what follows we use the following inequalities that can be shown by
standard scaling arguments:
\begin{subequations}
  \label{eq:standard_scaling_arguments}
  \begin{align}
    \label{eq:scaling_spatialK}    
    \envert{\det B_K} &\leq C h_K^2 ,
    \\
    \label{eq:scaling_spatialQK}
    \norm{\tilde{u}}_{\tilde{F}}^2 & \leq C h_K^{-1} \norm{u}_{F_K}^2,
    \\
    \label{eq:inv_scaling_spatialQK}
    \norm{u}_{F_K}^2 & \leq C h_K \norm{\tilde{u}}_{\tilde{F}}^2,
  \end{align}
\end{subequations}
where $F_K \in \partial K$, and where $\tilde{u} = \widehat{u}$ and
$\tilde{F} = F_{\widehat{K}} \in \partial \widehat{K}$, or
$\tilde{u} = \check{u}$ and
$\tilde{F} = F_{\check{K}} \in \partial \check{K}$.

\begin{lemma}
  \label{lem:identities_checkK_to_K}
  Let $u \in H^{(s_t, s_s)}(\mathcal{K})$. The following inequalities hold
  \begin{subequations}
    \begin{align}
      \label{eq:scaling_element_K}
      \norm[0]{u}_{\mathcal{K}}^2
      & \leq C h_K^2 \norm[0]{\check{u}}_{\check{\mathcal{K}}}^2,       
      \\
      \label{eq:inv_scaling_element_K}
      \norm[0]{\check{u}}_{\check{\mathcal{K}}}^2
      & \leq C h_K^{-2} \norm[0]{u}_{\mathcal{K}}^2,
      \\
      \label{eq:scaling_Qk}
      \norm[0]{\check{u}}_{\mathcal{F}_{\check{\mathcal{K}}}}^2
      &\leq C h_K^{-1} \norm[0]{u}_{\mathcal{F}_{\mathcal{K}}}^2,   
    \end{align}
  \end{subequations}
  where $\check{u} = u \circ G_{\mathcal{K}}$,
  $\mathcal{F}_{\check{\mathcal{K}}} \in
  \mathcal{Q}_{\check{\mathcal{K}}}$, and
  $\mathcal{F}_{\mathcal{K}} \in \mathcal{Q}_{\mathcal{K}}$.
\end{lemma}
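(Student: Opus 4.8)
The plan is to dispatch the two volume bounds \cref{eq:scaling_element_K} and \cref{eq:inv_scaling_element_K} simultaneously via a single change of variables, and then to reduce the face bound \cref{eq:scaling_Qk} to the purely spatial estimate \cref{eq:scaling_spatialQK} by exploiting the product structure of the lateral faces.

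First I would record the Jacobian of $G_{\mathcal{K}}$. Because $G_{\mathcal{K}}$ leaves the temporal coordinate fixed and acts on the spatial coordinates only through $B_K$, its Jacobian matrix is block diagonal with blocks $1$ and $B_K$, so its determinant equals $\det B_K$. The change of variables $\boldsymbol{x} = G_{\mathcal{K}}(\check{\boldsymbol{x}})$ together with $\check{u} = u \circ G_{\mathcal{K}}$ then gives the exact identity
\begin{equation*}
  \norm[0]{u}_{\mathcal{K}}^2 = \envert{\det B_K}\,\norm[0]{\check{u}}_{\check{\mathcal{K}}}^2 .
\end{equation*}
Estimate \cref{eq:scaling_element_K} follows at once by inserting the upper bound $\envert{\det B_K} \leq C h_K^2$ from \cref{eq:scaling_spatialK}. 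For \cref{eq:inv_scaling_element_K} I would rearrange the same identity into $\norm[0]{\check{u}}_{\check{\mathcal{K}}}^2 = \envert{\det B_K}^{-1}\,\norm[0]{u}_{\mathcal{K}}^2$ and apply the reverse bound $\envert{\det B_K}^{-1} \leq C h_K^{-2}$.

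The reverse determinant bound is the only step not contained in the list \cref{eq:standard_scaling_arguments}, so I would derive it from the spatial shape regularity \cref{eq:shape_reg}. Since $\envert{\det B_K} = 2\,\mathrm{area}(K)$ and the inscribed circle is contained in $K$, one has $\mathrm{area}(K) \geq \pi \rho_K^2 \geq \pi c_r^{-2} h_K^2$, hence $\envert{\det B_K} \geq C h_K^2$. This is the main (though mild) obstacle, in that it is the one inequality requiring a genuinely new ingredient rather than a reuse of the earlier scaling identities.

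For the face estimate \cref{eq:scaling_Qk}, the key observation is that a lateral face $\mathcal{F}_{\check{\mathcal{K}}} \in \mathcal{Q}_{\check{\mathcal{K}}}$ has the product form $\check{F}_K \times [t_n, t_{n+1}]$ with $\check{F}_K \in \partial\check{K}$, and that $G_{\mathcal{K}}$ carries it onto $F_K \times [t_n, t_{n+1}]$ with $F_K = H_K(\check{F}_K) \in \partial K$, leaving the temporal variable untouched. I would therefore write the surface integral as an iterated integral over the spatial edge followed by an integral over time, fix $t \in [t_n, t_{n+1}]$, and apply the purely spatial scaling \cref{eq:scaling_spatialQK} with $\tilde{u} = \check{u}(t,\cdot)$ and $\tilde{F} = \check{F}_K$ to obtain
\begin{equation*}
  \norm[0]{\check{u}(t,\cdot)}_{\check{F}_K}^2 \leq C h_K^{-1}\,\norm[0]{u(t,\cdot)}_{F_K}^2 .
\end{equation*}
Since the constant $C$ here is independent of $t$, integrating this inequality over $t \in [t_n, t_{n+1}]$ reassembles the full face norms and yields \cref{eq:scaling_Qk}.
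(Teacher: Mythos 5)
Your proposal is correct and takes essentially the same route as the paper, whose entire proof is a one-line citation of the standard scaling arguments \cref{eq:standard_scaling_arguments}: your write-up simply executes those scalings explicitly, using the block-diagonal Jacobian and $\envert{\det B_K}$ for the two volume bounds and time-slicing combined with \cref{eq:scaling_spatialQK} for the lateral-face bound. You are also right that the lower bound $\envert{\det B_K} \geq C h_K^2$ is the one ingredient not literally contained in the cited list, and your derivation of it from the shape-regularity assumption \cref{eq:shape_reg} (via $\envert{\det B_K} = 2\,\mathrm{area}(K) \geq \pi \rho_K^2 \geq \pi c_r^{-2} h_K^2$) correctly fills that gap.
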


\begin{proof}
  The results follow \cref{eq:standard_scaling_arguments}.
\end{proof}
We end this section by stating a trace inequality and two inverse trace
inequalities.
\begin{lemma}
  \label{lem:trace_ineq}
  Let $X_h(\mathcal{K}) \subset W_h(\mathcal{K})$ be a finite dimensional subspace such that the trace map
  $\gamma_{\mathcal{F}_{\mathcal{K}}}: X_h(\mathcal{K}) \mapsto
  M_h(\mathcal{F})$ defined by
  $\gamma_{\mathcal{F}_{\mathcal{K}}}(v_h) =
  v_h|_{\mathcal{F}_{\mathcal{K}}}$, for a face
  $\mathcal{F}_{\mathcal{K}} \subset \mathcal{Q}_{\mathcal{K}}$ is
  injective, then
  \begin{equation}
    \norm[0]{v_h}_{\mathcal{K}}^2 \leq C h_K \norm[0]{v_h}_{\mathcal{F}_{\mathcal{K}}}^2, \quad \forall v_h \in X_h(\mathcal{K}).
  \end{equation}
\end{lemma}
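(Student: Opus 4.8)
The plan is to transport the inequality from the fixed reference prism $\widehat{\mathcal{K}}$ through the intermediate prism $\check{\mathcal{K}}$ to the physical prism $\mathcal{K}$, exploiting the anisotropic scaling identities of \cref{lem:identities_Khat_to_checkK,lem:identities_checkK_to_K}. The decisive point is that the temporal dilation factor cancels between the volume norm and the lateral-face norm, so no power of $\Delta t$ survives, while the spatial map contributes exactly one power of $h_K$. This routing through $\check{\mathcal{K}}$, rather than a direct trace estimate on the anisotropic prism $\mathcal{K}$, is what delivers a constant depending on $h_K$ alone.

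First I would establish the inequality on the reference prism. Let $\mathcal{F}_{\mathcal{K}} \subset \mathcal{Q}_{\mathcal{K}}$ correspond to one of the lateral faces $\partial\widehat{\mathcal{K}}_j$, $j \in \{1,2,3\}$, and let $\widehat{v}_h = v_h \circ G_{\mathcal{K}} \circ F_{\mathcal{K}}$. Because the composition $G_{\mathcal{K}} \circ F_{\mathcal{K}}$ is affine, bijective, and block-diagonal in the temporal and spatial variables, it preserves the tensor-product polynomial structure, so the pullback of $X_h(\mathcal{K})$ is a fixed finite-dimensional subspace $\widehat{X}_h$ on $\widehat{\mathcal{K}}$. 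Since the trace map is injective on $X_h(\mathcal{K})$ and injectivity survives the affine pullback, the quantity $\widehat{v}_h \mapsto \norm[0]{\widehat{v}_h}_{\partial\widehat{\mathcal{K}}_j}$ is a genuine norm, not merely a seminorm, on $\widehat{X}_h$. Equivalence of norms on a fixed finite-dimensional space then yields a constant $\widehat{C}$, depending only on $p$ and the reference geometry (hence on neither $h_K$ nor $\Delta t$), with
\[
  \norm[0]{\widehat{v}_h}_{\widehat{\mathcal{K}}}^2 \leq \widehat{C}\,\norm[0]{\widehat{v}_h}_{\partial\widehat{\mathcal{K}}_j}^2 .
\]

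Next I would transfer this estimate to $\check{\mathcal{K}}$ via \cref{lem:identities_Khat_to_checkK} with $\alpha = 0$. Setting $\alpha_0 = 0$ in \cref{eq:scaling_element} gives $\norm[0]{\check{v}_h}_{\check{\mathcal{K}}}^2 = \tfrac{\Delta t}{2}\norm[0]{\widehat{v}_h}_{\widehat{\mathcal{K}}}^2$, and in \cref{eq:scaling_Ki} gives $\norm[0]{\check{v}_h}_{\partial\check{\mathcal{K}}_j}^2 = \tfrac{\Delta t}{2}\norm[0]{\widehat{v}_h}_{\partial\widehat{\mathcal{K}}_j}^2$, where $\check{v}_h = v_h \circ G_{\mathcal{K}}$. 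As both sides carry the identical factor $\tfrac{\Delta t}{2}$, it cancels, and the reference inequality passes verbatim to $\check{\mathcal{K}}$ with the very same constant,
\[
  \norm[0]{\check{v}_h}_{\check{\mathcal{K}}}^2 \leq \widehat{C}\,\norm[0]{\check{v}_h}_{\mathcal{F}_{\check{\mathcal{K}}}}^2 .
\]

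Finally I would invoke \cref{lem:identities_checkK_to_K}: \cref{eq:scaling_element_K} bounds $\norm[0]{v_h}_{\mathcal{K}}^2 \leq C h_K^2 \norm[0]{\check{v}_h}_{\check{\mathcal{K}}}^2$, while \cref{eq:scaling_Qk} bounds $\norm[0]{\check{v}_h}_{\mathcal{F}_{\check{\mathcal{K}}}}^2 \leq C h_K^{-1} \norm[0]{v_h}_{\mathcal{F}_{\mathcal{K}}}^2$. Chaining the three estimates gives
\[
  \norm[0]{v_h}_{\mathcal{K}}^2 \leq C h_K^2\, \widehat{C}\, \norm[0]{\check{v}_h}_{\mathcal{F}_{\check{\mathcal{K}}}}^2 \leq C h_K \norm[0]{v_h}_{\mathcal{F}_{\mathcal{K}}}^2 ,
\]
which is the desired inequality. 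The main obstacle is conceptual rather than computational: one must factor the map precisely as $\mathcal{K} \to \check{\mathcal{K}} \to \widehat{\mathcal{K}}$ so that the temporal factors cancel exactly and only the spatial scaling produces the single power $h_K$. A secondary point to handle carefully is the injectivity hypothesis, which is exactly what upgrades the face quantity from a seminorm to a norm on $\widehat{X}_h$ and thereby makes the norm-equivalence argument applicable.
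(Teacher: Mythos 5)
Your proof is correct and follows essentially the same route as the paper's: pass to the intermediate prism $\check{\mathcal{K}}$ via \cref{eq:scaling_element_K} and \cref{eq:scaling_Qk}, and use injectivity of the trace map to get a finite-dimensional norm-equivalence bound in between. The only difference is that you make explicit the pullback to the fixed reference prism $\widehat{\mathcal{K}}$ and the cancellation of the $\Delta t/2$ factors from \cref{eq:scaling_element} and \cref{eq:scaling_Ki}, a step the paper leaves implicit when asserting the norm equivalence on $\check{\mathcal{K}}$ with a constant independent of $\Delta t$.
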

\begin{proof}
  By \cref{eq:scaling_element_K}, we obtain
  \begin{equation}
    \label{eq:vhhk2vcheck}
    \norm[0]{v_h}_{\mathcal{K}}^2
    \leq C h_K^2 \norm[0]{\check{v}_h}_{\check{\mathcal{K}}}^2.
  \end{equation}
  Since $v_h$ is a polynomial and $\gamma_{\mathcal{F}_{\mathcal{K}}}$
  is injective, we have
  $\norm[0]{\check{v}_h}_{\check{\mathcal{K}}}^2 \leq C
  \norm[0]{\check{v}_h}_{\mathcal{F}_{\check{\mathcal{K}}}}^2$. Combining
  this with \cref{eq:vhhk2vcheck}, the result follows after using
  \cref{eq:scaling_Qk}.
\end{proof}

\begin{lemma}
  \label{lem:inv_trace_ineq}
  Let $\mathcal{K} = K\times I_n$ a space-time element in
  $\mathcal{T}^n$,
  $\mathcal{F} \subset \partial\mathcal{E}_{\mathcal{S}}^n$ a face on
  the free-surface boundary and $\partial\mathcal{F}_0$ the two edges
  of the face $\mathcal{F}$ that are on the time levels. For
  $v_h \in W_h(\mathcal{K})$ and $\lambda_h \in M_h(\mathcal{F})$, the
  following inverse trace inequalities hold
  \begin{subequations}
  	\begin{align}
	  	\label{eq:inv_trace_ineq_F0}
	  	\norm[0]{\lambda_h}_{\partial \mathcal{F}_0}^2 & \leq C \Delta t^{-1} \norm[0]{\lambda_h}_{\mathcal{F}}^2,
	  	\\
  		\label{eq:inv_trace_ineq_K0}
  		\norm[0]{v_h}_{\partial \mathcal{K}_0}^2 & \leq C \Delta t^{-1} \norm[0]{v_h}_{\mathcal{K}}^2,
  		\\
  		\label{eq:inv_trace_ineq_QK}
  		\norm[0]{v_h}_{\mathcal{Q}_{\mathcal{K}}}^2 & \leq C h_K^{-1} \norm[0]{v_h}_{\mathcal{K}}^2.
  	\end{align}
  \end{subequations}
\end{lemma}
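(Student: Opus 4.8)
The plan is to prove all three bounds by the same three-step strategy: first map the quantities involved to the fixed reference prism $\widehat{\mathcal{K}}$ (and, for \eqref{eq:inv_trace_ineq_F0}, to the corresponding reference free-surface face $\widehat{\mathcal{F}}$); then invoke a trace bound on the reference element of the form $\norm[0]{\widehat{v}_h}_{\partial\widehat{\mathcal{K}}}^2 \leq C\norm[0]{\widehat{v}_h}_{\widehat{\mathcal{K}}}^2$, which holds simply because the face $L^2$-norm is a continuous seminorm on the finite-dimensional polynomial space $W_h(\widehat{\mathcal{K}})$ and is therefore dominated by the full $L^2$-norm with a constant depending only on $p$; and finally map back to $\mathcal{K}$ using the anisotropic scaling identities of \cref{lem:identities_Khat_to_checkK} and \cref{lem:identities_checkK_to_K}, writing $\check{v}_h = v_h\circ G_{\mathcal{K}}$ and $\widehat{v}_h = \check{v}_h \circ F_{\mathcal{K}}$ as there. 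The crucial point is that these identities separate the temporal factor $\Delta t$ (introduced by $F_{\mathcal{K}}$) from the spatial factor $h_K$ (introduced by $G_{\mathcal{K}}$), and in each of the three estimates exactly one of the two factors cancels, leaving the asserted power of $\Delta t^{-1}$ or $h_K^{-1}$.

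For \eqref{eq:inv_trace_ineq_K0} I would start on the temporal faces $\partial\mathcal{K}_0 = K_j^n \cup K_j^{n+1}$, on which $G_{\mathcal{K}}$ acts only through the spatial map $B_K$, so the surface measure scales by $\envert{\det B_K} \leq C h_K^2$ via \eqref{eq:scaling_spatialK}; applying \eqref{eq:scaling_K0} with $\alpha=0$ (where the temporal factor is trivial) transfers the norm to $\partial\widehat{\mathcal{K}}_0$, the reference trace bound controls it by $\norm[0]{\widehat{v}_h}_{\widehat{\mathcal{K}}}^2$, and \eqref{eq:scaling_element} with $\alpha=0$ together with \eqref{eq:inv_scaling_element_K} returns us to $\mathcal{K}$ at the cost of a factor $(2/\Delta t)\,h_K^{-2}$. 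The spatial powers $h_K^2$ and $h_K^{-2}$ cancel, and the claimed $\Delta t^{-1}$ survives.

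Estimate \eqref{eq:inv_trace_ineq_QK} is the mirror image. The lateral faces $\mathcal{Q}_{\mathcal{K}}$ carry the product of a spatial edge with the time interval, so the scaling between $\mathcal{Q}_{\mathcal{K}}$ and $\mathcal{Q}_{\check{\mathcal{K}}}$ produces a factor $h_K$ (the reverse of \eqref{eq:scaling_Qk}, which is equally available from \eqref{eq:inv_scaling_spatialQK} because $G_{\mathcal{K}}$ leaves time untouched), while \eqref{eq:scaling_Ki} with $\alpha=0$ contributes the temporal factor $\Delta t/2$. After the reference trace bound and \eqref{eq:scaling_element}/\eqref{eq:inv_scaling_element_K}, the two $\Delta t$ contributions cancel and $h_K^{-1}$ remains. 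Inequality \eqref{eq:inv_trace_ineq_F0} is handled identically but one dimension lower: the free-surface face $\mathcal{F} = e \times I_n$ and its temporal edges $\partial\mathcal{F}_0$ are the natural face-level analogues of $\mathcal{K}$ and $\partial\mathcal{K}_0$, so the full spatial edge $e$ contributes a common factor $h_K$ to both $\norm[0]{\lambda_h}_{\partial\mathcal{F}_0}$ and $\norm[0]{\lambda_h}_{\mathcal{F}}$, which cancels and again leaves $\Delta t^{-1}$.

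The repeated changes of variables and the power counting are routine; the real work is the bookkeeping, namely consistently tracking which of $F_{\mathcal{K}}$ and $G_{\mathcal{K}}$ contributes each factor. The main obstacle I anticipate is that \eqref{eq:inv_trace_ineq_F0} is not a direct corollary of \cref{lem:identities_Khat_to_checkK} and \cref{lem:identities_checkK_to_K}, since $\mathcal{F}$ and $\partial\mathcal{F}_0$ are not among the element faces treated there; one must re-derive the face-level scaling identities for $Q_p(\mathcal{F})$, which is immediate from the same change-of-variables computation restricted to the face, and then apply the reference trace bound on $Q_p(\widehat{\mathcal{F}})$. Since $F_{\mathcal{K}}$ and $G_{\mathcal{K}}$ are affine they preserve all the polynomial spaces, so the mapped functions remain admissible and the reference constants depend only on $p$.
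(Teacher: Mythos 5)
Your proposal is correct and follows essentially the same route as the paper: the paper's own proof of \cref{eq:inv_trace_ineq_K0} and \cref{eq:inv_trace_ineq_QK} is literally ``\cref{lem:identities_Khat_to_checkK} plus standard scaling arguments in space,'' which is exactly what you carry out, and your power counting (the $h_K^{2}$/$h_K^{-2}$ cancellation leaving $\Delta t^{-1}$ for the temporal faces, and the $\Delta t$ cancellation leaving $h_K^{-1}$ for the lateral faces) is right, with the reference-element trace bound correctly justified by the fact that any seminorm on a finite-dimensional polynomial space is dominated by any norm. The only point of divergence is \cref{eq:inv_trace_ineq_F0}, which the paper dispatches by citing an external result of Georgoulis for quadrilateral faces \cite[Corollary 3.49]{Georgoulis:2003}, whereas you correctly anticipate that the element-level scaling lemmas do not directly apply and re-derive the face-level identities yourself; this yields a self-contained proof of the same estimate by the same technique, so it is an acceptable (arguably preferable) substitute rather than a genuinely different method.
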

\begin{proof}
  Since the face $\mathcal{F}$ is a quadrilateral, the proof for
  \cref{eq:inv_trace_ineq_F0} can be found, e.g., in \cite[Corollary
  3.49]{Georgoulis:2003}.  \Cref{eq:inv_trace_ineq_K0} and
  \cref{eq:inv_trace_ineq_QK} can be obtained by the results in
  \Cref{lem:identities_Khat_to_checkK} and standard scaling arguments
  in space.
\end{proof}

%------------------------------------------------------------------------------
\section{Error analysis}
\label{sec:erroranalysis}

In this section we present an \emph{a priori} error analysis for the
space-time HDG method \cref{eq:HDGProblem}. For this we require the
following spaces:
\begin{equation}
  \widetilde{W}_h(\mathcal{K})
  := P_{p-1}(K) \otimes P_p(I_n),
  \qquad
  \widetilde{\boldsymbol{V}}_h(\mathcal{K})
  := \sbr[1]{P_{p-1}(K) \otimes P_p(I_n)}^2.  
\end{equation}
We require also the $f_n$-weighted $L^2$-norm defined on a domain
$D$. For any $v \in L^2(D)$ this norm is defined as
$\norm[0]{v}_{f_n,D}^2 := \del{f_nv,v}_{D}$ while for
$\boldsymbol{q} \in \sbr[0]{L^2(D)}^2$ it is defined as
$\norm[0]{\boldsymbol{q}}_{f_n,D}^2 :=
\del{f_n\boldsymbol{q},\boldsymbol{q}}_{D}$.
	
%------------------------------------------------------------------------------
\subsection{The projection}
\label{ss:projection}

The projection $\Pi_h$ onto $\boldsymbol{V}_h \times W_h$ used here is
based on the projection defined in \cite{Cockburn:2010}, but tailored
to the spaces used in this work. The projected function is denoted by
$\Pi_h\del[0]{\boldsymbol{q},v}$ or
$\del[0]{\boldsymbol{\Pi_V q}, \Pi_W v}$, and is defined by requiring
that the following equations are satisfied on each space-time element
$\mathcal{K} \in \mathcal{T}^n$:
\begin{subequations}
  \begin{align}
    %\nonumber
    \del[0]{\boldsymbol{\Pi_V q}, \boldsymbol{s}_h f_n}_{\mathcal{K}}
    &= \del[0]{\boldsymbol{q}, \boldsymbol{s}_h f_n}_{\mathcal{K}}
    && \forall \boldsymbol{s}_h \in \widetilde{\boldsymbol{V}}_h(\mathcal{K}),
    \\
    %\nonumber
    \del[0]{\Pi_W v, z_h f_n}_{\mathcal{K}}
    &= \del[0]{v, z_h f_n}_{\mathcal{K}}
    && \forall z_h \in \widetilde{W}_h(\mathcal{K}),
    \\
    %\nonumber
    \langle \boldsymbol{\Pi_V q}\cdot\boldsymbol{n} - \tau \Pi_Wv, \sigma_h f_n \rangle_{\mathcal{F}}
    &= \langle \boldsymbol{q}\cdot\boldsymbol{n} - \tau v, \sigma_hf_n \rangle_{\mathcal{F}}
    && \forall \sigma_h \in M_h(\mathcal{F}),\ \mathcal{F} \subset \mathcal{Q}_{\mathcal{K}}.
  \end{align}
  \label{eq:def_projection}
\end{subequations}

Notice that $\Pi_h$ is well defined for functions $\boldsymbol{q}$ and
$v$ such that their traces are in
$L^2(\mathcal{Q}_{\mathcal{K}})$. Therefore, the domain of $\Pi_h$ is in
$\sbr[1]{H^1(\mathcal{T}^n)}^2 \times H^1(\mathcal{T}^n)$, where
$H^1(\mathcal{T}^n) := \prod_{\mathcal{K} \in
  \mathcal{T}^n}H^1(\mathcal{K})$.

In order to show existence and uniqueness of the projection and its approximation properties, it will be useful to define the following spaces:
\begin{subequations}
	\begin{align}
	W_h^{\perp}(\mathcal{K})
	& := \cbr[1]{w \in W_h(\mathcal{K}) : \del[1]{w, \widetilde{w} f_n}_{\mathcal{K}} = 0, \forall \, \widetilde{w} \in \widetilde{W}_h(\mathcal{K})},
	\\
	\boldsymbol{V}^{\perp}_h(\mathcal{K})
	& := \cbr[1]{\boldsymbol{v} \in \boldsymbol{V}_h(\mathcal{K}) : \del[1]{\boldsymbol{v},\widetilde{\boldsymbol{v}} f_n}_{\mathcal{K}} = 0, \forall \, \widetilde{\boldsymbol{v}} \in \widetilde{\boldsymbol{V}}_h(\mathcal{K})}.
	\end{align}
\end{subequations}

The following lemma will be useful when showing the existence and
uniqueness of the projection and its approximation properties.
\begin{lemma}
  \label{lem:scaling_identities}
  For any space-time element $\mathcal{K}$, the following is satisfied
  for any face
  $\mathcal{F}_{\mathcal{K}} \in \mathcal{Q}_{\mathcal{K}}$:
  \begin{subequations}
    \begin{align}
      \label{eq:w_norm}
      w_h \in W_h^{\perp}(\mathcal{K}) \text{ and } w_h \vert_{\mathcal{F}_{\mathcal{K}}} = 0,
      & \quad \text{ implies } w_h = 0 \text{ on } \mathcal{K},
      \\
      \label{eq:v_norm}      
      \boldsymbol{v}_h \in \boldsymbol{V}_h^{\perp}(\mathcal{K})
      \text{ and } \boldsymbol{v}_h \cdot \boldsymbol{n}\vert_{\mathcal{Q}_{\mathcal{K}}\backslash \mathcal{F}_{\mathcal{K}}} = 0
      & \quad \text{ implies } \boldsymbol{v}_h = 0 \text{ on } \mathcal{K}.
    \end{align}
    \label{eq:w_v_norms}
  \end{subequations}
	%	\begin{subequations}
	%   \begin{align}
	%     \gamma_{\mathcal{F}} : W_h^{\perp}(\mathcal{K}) \longmapsto M_h(\mathcal{F}) & \quad \text{ defined by } \quad \gamma_{\mathcal{F}}(w) = w \vert_{\mathcal{F}},
	%     \\
	%     \varphi_{\mathcal{F}} : \boldsymbol{V}_h^{\perp}(\mathcal{K}) \longmapsto M_h(\mathcal{F}) & \quad \text{ defined by } \quad \varphi_{\mathcal{F}}(\boldsymbol{v}) = \boldsymbol{v} \cdot \boldsymbol{n}_{\mathcal{F}}
	%   \end{align}
	% \end{subequations}
	% 
	Moreover, the following estimates are satisfied
	\begin{subequations}
		\begin{align}
		\label{eq:wv_fn_estimates_a}
		\norm[0]{w_h}_{f_n,\mathcal{K}} \leq C h_K^{1/2} \norm[0]{w_h}_{f_n,\mathcal{F}_{\mathcal{K}}}
		& \quad \forall w_h \in W_h^{\perp}(\mathcal{K}),
		\\
		\label{eq:wv_fn_estimates_b}
		\norm[0]{\boldsymbol{v}_h}_{f_n,\mathcal{K}} \leq C h_K^{1/2} \norm[0]{\boldsymbol{v}_h \cdot \boldsymbol{n}}_{f_n,\mathcal{Q}_{\mathcal{K}}}
		& \quad \forall \boldsymbol{v}_h \in \boldsymbol{V}_h^{\perp}(\mathcal{K}).
		\end{align}
		\label{eq:wv_fn_estimates}
	\end{subequations}
\end{lemma}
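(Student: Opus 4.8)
The plan is to reduce every assertion to a purely spatial statement at each fixed time, exploiting that the weight $f_n$ depends only on $t$ and that the $f_n$-weighted $L^2$-inner product is positive definite on $P_p(I_n)$. Concretely, testing the defining orthogonality of $W_h^{\perp}(\mathcal{K})$ (resp.\ $\boldsymbol{V}_h^{\perp}(\mathcal{K})$) against separable functions $\psi(\boldsymbol{x})\chi(t)$ with $\psi \in P_{p-1}(K)$ (resp.\ $[P_{p-1}(K)]^2$) and $\chi \in P_p(I_n)$, and noting that $t \mapsto (w_h(\cdot,t),\psi)_K$ lies in $P_p(I_n)$ and is $f_n$-orthogonal to all of $P_p(I_n)$, I would show that $w_h \in W_h^{\perp}(\mathcal{K})$ if and only if $w_h(\cdot,t)$ is $L^2(K)$-orthogonal to $P_{p-1}(K)$ for every $t \in I_n$, and analogously for $\boldsymbol{V}_h^{\perp}(\mathcal{K})$. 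This strips the weight away and lets the spatial triangle $K$ carry the whole argument, with $t$ a passive parameter.

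For the injectivity \eqref{eq:w_norm} I would argue directly in space-time: since $w_h$ vanishes on $\mathcal{F}_{\mathcal{K}} = e\times I_n$, the one-signed linear function $\ell$ vanishing on the spatial edge $e$ divides $w_h$, so $w_h = \ell\,g$ with $g \in \widetilde{W}_h(\mathcal{K})$; testing the orthogonality against $g$ gives $\int_{\mathcal{K}}\ell g^2 f_n\,\mathrm{d}\mathcal{K} = 0$, and $\ell f_n$ being one-signed forces $g=0$. The vector statement \eqref{eq:v_norm} is the crux. After the slice reduction, at each $t$ I have $\boldsymbol{w} := \boldsymbol{v}_h(\cdot,t) \in [P_p(K)]^2$ with $\boldsymbol{w} \perp_{L^2(K)} [P_{p-1}(K)]^2$ and $\boldsymbol{w}\cdot\boldsymbol{n} = 0$ on the two edges $e_1,e_2$ composing $\mathcal{Q}_{\mathcal{K}}\setminus\mathcal{F}_{\mathcal{K}}$. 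Integrating $(\boldsymbol{w},\nabla\psi)_K = 0$ by parts for $\psi \in P_p(K)$ (legitimate since $\nabla\psi \in [P_{p-1}(K)]^2$) and choosing $\psi = \lambda_3\eta$ (vanishing on the third edge $e_3$) yields $\int_K \lambda_3(\nabla\cdot\boldsymbol{w})\eta = 0$ for all $\eta \in P_{p-1}(K)$, whence $\nabla\cdot\boldsymbol{w} = 0$; the same identity with general $\psi$ then forces $\boldsymbol{w}\cdot\boldsymbol{n} = 0$ on $e_3$ as well. Thus $\boldsymbol{w}$ is divergence-free with vanishing normal trace on all of $\partial K$, so on the simply connected $K$ it is a rotated gradient $\boldsymbol{w} = \nabla^{\perp}\chi := (\partial_{x_2}\chi, -\partial_{x_1}\chi)$ with $\chi \in P_{p+1}(K)$ constant on $\partial K$, i.e.\ after normalizing $\chi = \lambda_1\lambda_2\lambda_3\,\theta$ with $\theta \in P_{p-2}(K)$. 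Finally, testing the orthogonality against $\nabla^{\perp}\xi \in [P_{p-1}(K)]^2$ for $\xi \in P_p(K)$ gives $(\nabla\chi,\nabla\xi)_K = 0$; integrating by parts and using that $\Delta:P_p(K)\to P_{p-2}(K)$ is onto yields $\chi \perp_{L^2(K)} P_{p-2}(K)$, so $\int_K \lambda_1\lambda_2\lambda_3\,\theta^2 = 0$ and hence $\theta = 0$, $\boldsymbol{w} = 0$. Since this holds for every $t$, $\boldsymbol{v}_h \equiv 0$.

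For the estimates \eqref{eq:wv_fn_estimates} I would first note that the injectivity just established makes $w_h \mapsto w_h|_{e}$ a norm on the finite-dimensional spatial orthogonal complement of $P_{p-1}(K)$ in $P_p(K)$, and $\boldsymbol{w}\mapsto \boldsymbol{w}\cdot\boldsymbol{n}|_{\partial K}$ a norm on the corresponding vector complement. On the reference triangle $\widehat{K}$, equivalence of norms on a finite-dimensional space gives $\|\widehat{w}\|_{\widehat{K}} \le C\|\widehat{w}\|_{\hat e}$ and $\|\widehat{\boldsymbol{w}}\|_{\widehat{K}} \le C\|\widehat{\boldsymbol{w}}\cdot\boldsymbol{n}\|_{\partial\widehat{K}}$, with $C$ depending only on $p$, for the pullbacks through $H_K$; transferring back to $K$ with the standard spatial scalings \eqref{eq:standard_scaling_arguments} (using $|\det B_K|\sim h_K^2$ and the edge scaling, comparability of normal traces up to shape-regular constants) produces the per-slice bounds $\|w_h(\cdot,t)\|_K^2 \le C h_K\|w_h(\cdot,t)\|_e^2$ and $\|\boldsymbol{v}_h(\cdot,t)\|_K^2 \le C h_K\|\boldsymbol{v}_h(\cdot,t)\cdot\boldsymbol{n}\|_{\partial K}^2$, with $C$ independent of $t$, $h_K$, and $\Delta t$. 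Multiplying each by $f_n(t) \ge 0$ and integrating over $I_n$, which reproduces exactly the weighted norms $\|\cdot\|_{f_n,\mathcal{K}}$, $\|\cdot\|_{f_n,\mathcal{F}_{\mathcal{K}}}$ and $\|\cdot\|_{f_n,\mathcal{Q}_{\mathcal{K}}}$, and taking square roots gives \eqref{eq:wv_fn_estimates_a} and \eqref{eq:wv_fn_estimates_b}.

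The hard part is the vector injectivity \eqref{eq:v_norm}: the scalar case closes by a one-line factorization, whereas the vector case genuinely needs the integration-by-parts step to upgrade the two prescribed normal traces to divergence-free plus a third vanishing normal trace, followed by the stream-function representation and the surjectivity of the Laplacian. A secondary point to get right is $\Delta t$-independence of the constants; the time-slicing device handles this automatically, since every quantitative estimate is proved on the spatial triangle with $t$ frozen and the positive weight $f_n$ is only integrated in afterwards, so its oscillation over $I_n$ never enters the constant.
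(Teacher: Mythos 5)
Your proposal is correct, but it takes a genuinely different and considerably heavier route than the paper, and your own assessment of where the difficulty lies is inverted. For the injectivity statement \cref{eq:v_norm}, the paper does not need any divergence-free/stream-function machinery: since the lateral faces of the prism are planar, each face normal $\boldsymbol{n}_{\mathcal{F}}$ is a \emph{constant} vector, so $p_h := \boldsymbol{v}_h\cdot\boldsymbol{n}_{\mathcal{F}}$ is a scalar function lying in $W_h^{\perp}(\mathcal{K})$ (test $\boldsymbol{v}_h$ against $\widetilde{w}\,\boldsymbol{n}_{\mathcal{F}}\in\widetilde{\boldsymbol{V}}_h(\mathcal{K})$) which vanishes on $\mathcal{F}$; the scalar result \cref{eq:w_norm} then gives $p_h\equiv 0$, and since the two normals of $\mathcal{Q}_{\mathcal{K}}\setminus\mathcal{F}_{\mathcal{K}}$ span $\mathbb{R}^2$, $\boldsymbol{v}_h=0$. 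Your integration-by-parts argument (div-free field, vanishing normal trace, bubble factorization, surjectivity of $\Delta:P_p\to P_{p-2}$) is valid --- I checked each step, including the slice reduction, which is a nice observation that cleanly strips the weight $f_n$ --- but it is not ``genuinely needed.'' The same constant-vector trick is also how the paper proves the vector estimate \cref{eq:wv_fn_estimates_b}: write $\boldsymbol{v}_h=\boldsymbol{v}_1(\boldsymbol{v}_h\cdot\boldsymbol{n}_{\mathcal{F}_1})+\boldsymbol{v}_2(\boldsymbol{v}_h\cdot\boldsymbol{n}_{\mathcal{F}_2})$ with the dual basis $\boldsymbol{v}_1,\boldsymbol{v}_2$ (bounded by shape regularity) and apply the scalar estimate \cref{eq:wv_fn_estimates_a} to each term; the paper in turn obtains \cref{eq:wv_fn_estimates_a} from its space-time trace inequality (\cref{lem:trace_ineq}) plus uniform bounds on $f_n$, whereas you slice in time, which handles the weight somewhat more elegantly.

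The one step in your argument that is genuinely under-justified is the transfer of the \emph{vector} per-slice bound $\|\boldsymbol{v}_h(\cdot,t)\|_{K}^2\le Ch_K\|\boldsymbol{v}_h(\cdot,t)\cdot\boldsymbol{n}\|_{\partial K}^2$ from the reference triangle to $K$, which you dismiss as ``comparability of normal traces up to shape-regular constants.'' Affine pullback does not commute with taking normal components: if $\widehat{\boldsymbol{w}}=\boldsymbol{w}\circ H_K$ componentwise, then $\widehat{\boldsymbol{w}}\cdot\widehat{\boldsymbol{n}}$ is (up to normalization) $(B_K\boldsymbol{w})\cdot\boldsymbol{n}\circ H_K$, not $\boldsymbol{w}\cdot\boldsymbol{n}\circ H_K$, so the reference-element norm equivalence in terms of $\widehat{\boldsymbol{n}}$ does not directly control the physical quantity $\|\boldsymbol{w}\cdot\boldsymbol{n}\|_{\partial K}$. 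This is fixable --- either by a compactness/uniformity argument over the family of shape-regular triangles of unit diameter followed by pure dilation (which does preserve normals), or, most simply, by replacing this whole step with the paper's dual-basis decomposition, which reduces the vector estimate to the scalar one and never transforms a normal. As written, though, this is the one place where your proof would not compile into a complete argument without additional work.
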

\begin{proof}
	We first show \cref{eq:w_norm}. Take
	$w_h \in W_h^{\perp}(\mathcal{K})$ such that
	$w_h|_{\mathcal{F}_{\mathcal{K}}} = 0$ and let $L$ be a nonzero linear
	function that vanishes on $\mathcal{F}_{\mathcal{K}}$. Then, $w_h$ can
	be written as $w_h = L\widetilde{p}$, where
	$\widetilde{p} \in \widetilde{W}_h(\mathcal{K})$ \cite[Lemma
	3.1.10]{BrennerScott:book}. Since
	$w_h \in W^{\perp}_h(\mathcal{K})$, we have that
	$\del[1]{L\widetilde{p}, f_n\widetilde{p}}_{\mathcal{K}}= 0$. Since $L$
	cannot be zero on $\mathcal{K}$, we conclude that $\widetilde{p}$
	must be zero and therefore, $w_h$ is zero on $\mathcal{K}$.
	
	We next show \cref{eq:v_norm}. Let
	$p_h = \boldsymbol{v}_h \cdot \boldsymbol{n}_{\mathcal{F}}$, for any
	face $\mathcal{F}$ different than
	$\mathcal{F}_{\mathcal{K}}$. Notice that $p_h \in W_h^{\perp}(\mathcal{K})$
	and $p_h |_{\mathcal{F}} = 0$. Using a similar argument as above, we
	can conclude that $p_h = 0$ on $\mathcal{K}$. Therefore,
	$\boldsymbol{v}_h \cdot \boldsymbol{n}_{\mathcal{F}} = 0$ on
	$\mathcal{K}$. Since the set
	$\cbr[1]{\boldsymbol{n}_{\mathcal{F}} : \mathcal{F} \in
		\mathcal{Q}_{\mathcal{K}} \backslash \mathcal{F}_{\mathcal{K}}}$
	is a basis of $\mathbb{R}^2$, we conclude that $\boldsymbol{v}_h$
	must be zero on $\mathcal{K}$.
	
	To show \cref{eq:wv_fn_estimates_a}, we use the scaling identities
	from \cref{ss:scaling_identities}. Since
	$\norm[0]{\cdot}_{f_n,\mathcal{K}}$ is a weighted norm and
	$w_h f_n$, for $w_h \in W_h^{\perp}(\mathcal{K})$, is not a broken
	polynomial, as required in the proof of \Cref{lem:trace_ineq}, we
	cannot use \Cref{lem:trace_ineq} directly. However, since $f_n$
	is uniformly bounded, there exists a constant $C_{f_n} > 0$ such
	that $\sup_t f_n(t) \leq C_{f_n}$ for all $n$. Therefore,
	\begin{equation}
	\norm[0]{w_h}_{f_n,\mathcal{K}}^2 \leq C_{f_n} \norm[0]{w_h}_{\mathcal{K}}^2.
	\end{equation}
	Notice that \cref{eq:w_norm} implies that the trace map
	$\gamma_{\mathcal{F}_{\mathcal{K}}}: W_h^{\perp}(\mathcal{K})
	\mapsto M_h(\mathcal{F}_{\mathcal{K}})$ defined by
	$\gamma_{\mathcal{F}_{\mathcal{K}}}(w_h) = w_h
	|_{\mathcal{F}_{\mathcal{K}}}$ is injective. By
	\Cref{lem:trace_ineq} we then obtain
	\begin{equation}
	\norm[0]{w_h}_{f_n,\mathcal{K}}^2 \leq C h_K \norm[0]{w_h}_{\mathcal{F}_{\mathcal{K}}}^2.
	\end{equation}
	\Cref{eq:wv_fn_estimates_a} now follows by equivalence of norms on
	finite-dimensional spaces.
	
	Finally, we show \cref{eq:wv_fn_estimates_b}. Let
	$\mathcal{F}_1, \mathcal{F}_2 \in \mathcal{Q}_{\mathcal{K}}$ with
	$\mathcal{F}_1 \neq \mathcal{F}_{\mathcal{K}}$ and
	$\mathcal{F}_2 \neq \mathcal{F}_{\mathcal{K}}$. Notice that
	$\cbr[0]{\boldsymbol{n}_{\mathcal{F}_1},
		\boldsymbol{n}_{\mathcal{F}_2}}$ is a basis of $\mathbb{R}^2$,
	therefore, we can write $\boldsymbol{v}_h$ as
	$\boldsymbol{v}_h = \boldsymbol{v}_1 \boldsymbol{v}_h\cdot
	\boldsymbol{n}_{\mathcal{F}_1} + \boldsymbol{v}_2
	\boldsymbol{v}_h\cdot \boldsymbol{n}_{\mathcal{F}_2}$, where
	$\boldsymbol{v}_1, \boldsymbol{v}_2 \in \mathbb{R}^2$ are constant
	vectors. Thus,
	\begin{equation}
	\begin{split}
	\norm[1]{\boldsymbol{v}_h}_{f_n, \mathcal{K}} & = \norm[1]{\boldsymbol{v}_1 \boldsymbol{v}_h\cdot \boldsymbol{n}_{\mathcal{F}_1} + \boldsymbol{v}_2 \boldsymbol{v}_h\cdot \boldsymbol{n}_{\mathcal{F}_2}}_{f_n, \mathcal{K}}
	\\
	& \leq \norm[1]{\boldsymbol{v}_1}_{f_n, \mathcal{K}} \norm[1]{\boldsymbol{v}_h\cdot \boldsymbol{n}_{\mathcal{F}_1}}_{f_n, \mathcal{K}} + \norm[1]{\boldsymbol{v}_2}_{f_n, \mathcal{K}}\norm[1]{\boldsymbol{v}_h\cdot \boldsymbol{n}_{\mathcal{F}_2}}_{f_n, \mathcal{K}}
	\\
	& \leq C\del[1]{\, \norm[1]{\boldsymbol{v}_h\cdot \boldsymbol{n}_{\mathcal{F}_1}}_{f_n, \mathcal{K}} + \norm[1]{\boldsymbol{v}_h\cdot \boldsymbol{n}_{\mathcal{F}_2}}_{f_n, \mathcal{K}}}.
	\end{split}
	\end{equation}
	Applying \cref{eq:wv_fn_estimates_a} to the scalar functions $\boldsymbol{v}_h \cdot \boldsymbol{n}_{\mathcal{F}_1}$ and $\boldsymbol{v}_h \cdot \boldsymbol{n}_{\mathcal{F}_2}$, we have
	\begin{equation}
	\begin{split}
	\norm[1]{\boldsymbol{v}_h}_{f_n, \mathcal{K}} & \leq Ch_K^{1/2}\del[1]{\norm[1]{\boldsymbol{v}_h\cdot \boldsymbol{n}_{\mathcal{F}_1}}_{f_n, \mathcal{F}_1} + \norm[1]{\boldsymbol{v}_h\cdot \boldsymbol{n}_{\mathcal{F}_2}}_{f_n, \mathcal{F}_2}}
	\\
	& \leq Ch_K^{1/2} \norm[0]{\boldsymbol{v}_h \cdot \boldsymbol{n}}_{f_n,\mathcal{Q}_{\mathcal{K}}},
	\end{split}
	\end{equation}
	proving the result.
\end{proof}

We next prove existence and uniqueness of $\Pi_h$.

\begin{lemma}
  \label{lem:projectionexists}
  The projection $\Pi_h$ defined by \cref{eq:def_projection}
  exists and is unique.
\end{lemma}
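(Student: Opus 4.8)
The plan is to exploit that \cref{eq:def_projection} is a \emph{square} linear system on each space-time element $\mathcal{K}$, so that existence and uniqueness both follow once the homogeneous system has only the trivial solution. First I would verify the dimension count. The unknowns $\del{\boldsymbol{\Pi_V q},\Pi_W v}$ live in $\boldsymbol{V}_h(\mathcal{K})\times W_h(\mathcal{K})$, whose dimension is $3\dim P_p(K)\dim P_p(I_n) = \tfrac{3}{2}(p+1)^2(p+2)$. The test spaces $\widetilde{\boldsymbol{V}}_h(\mathcal{K})$ and $\widetilde{W}_h(\mathcal{K})$, together with the three lateral faces $\mathcal{F}\subset\mathcal{Q}_{\mathcal{K}}$, each carrying $M_h(\mathcal{F}) = Q_p(\mathcal{F})$ of dimension $(p+1)^2$, supply $3\dim P_{p-1}(K)\dim P_p(I_n) + 3(p+1)^2 = \tfrac{3}{2}(p+1)^2(p+2)$ equations. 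Since these agree, it suffices to prove injectivity.

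For injectivity I set the right-hand sides of \cref{eq:def_projection} to zero. The first two equations then state exactly that $\boldsymbol{\Pi_V q}\in\boldsymbol{V}_h^{\perp}(\mathcal{K})$ and $\Pi_W v\in W_h^{\perp}(\mathcal{K})$. In the third equation the integrand $\boldsymbol{\Pi_V q}\cdot\boldsymbol{n} - \tau\Pi_W v$ restricted to a lateral face lies in $Q_p(\mathcal{F}) = M_h(\mathcal{F})$ (the spatial trace of a $P_p(K)$ factor is a polynomial of degree $p$ on the edge, while the temporal factor is unchanged), so testing it against itself with the positive weight $f_n$ forces $\boldsymbol{\Pi_V q}\cdot\boldsymbol{n} = \tau\Pi_W v$ on every face of $\mathcal{Q}_{\mathcal{K}}$.

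The crux is an energy identity obtained from a judicious choice of test function. I would take $\boldsymbol{s}_h = \nabla\Pi_W v$ (spatial gradient) in the first projection equation; this is admissible since $\nabla\Pi_W v\in\sbr[1]{P_{p-1}(K)\otimes P_p(I_n)}^2 = \widetilde{\boldsymbol{V}}_h(\mathcal{K})$, giving $\del{\boldsymbol{\Pi_V q}, f_n\nabla\Pi_W v}_{\mathcal{K}} = 0$. Integrating by parts in space --- permissible because $f_n$ depends only on time and so passes through the spatial integral --- yields
\[
  0 = -\del{\nabla\cdot\boldsymbol{\Pi_V q}, f_n\Pi_W v}_{\mathcal{K}} + \langle\boldsymbol{\Pi_V q}\cdot\boldsymbol{n}, f_n\Pi_W v\rangle_{\mathcal{Q}_{\mathcal{K}}}.
\]
The volume term vanishes: $\nabla\cdot\boldsymbol{\Pi_V q}\in P_{p-1}(K)\otimes P_p(I_n) = \widetilde{W}_h(\mathcal{K})$, so it is annihilated by the second (homogeneous) projection equation. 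Substituting the face identity $\boldsymbol{\Pi_V q}\cdot\boldsymbol{n} = \tau\Pi_W v$ into what remains leaves $\tau\norm[0]{\Pi_W v}_{f_n,\mathcal{Q}_{\mathcal{K}}}^2 = 0$, whence $\Pi_W v = 0$ on every lateral face.

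To finish, I would invoke the injectivity statements of \Cref{lem:scaling_identities}. Since $\Pi_W v\in W_h^{\perp}(\mathcal{K})$ vanishes on a face, \cref{eq:w_norm} gives $\Pi_W v = 0$ on all of $\mathcal{K}$. The face identity then forces $\boldsymbol{\Pi_V q}\cdot\boldsymbol{n} = 0$ on every face of $\mathcal{Q}_{\mathcal{K}}$, in particular on the two faces other than any fixed $\mathcal{F}_{\mathcal{K}}$; as $\boldsymbol{\Pi_V q}\in\boldsymbol{V}_h^{\perp}(\mathcal{K})$, \cref{eq:v_norm} yields $\boldsymbol{\Pi_V q} = 0$. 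Together with the dimension count, this establishes existence and uniqueness. I expect the main obstacle to be the energy step: one must recognise $\nabla\Pi_W v$ as the correct test function and that the reduced spatial polynomial degrees of both $\nabla\Pi_W v$ and $\nabla\cdot\boldsymbol{\Pi_V q}$ are precisely what make the two volume contributions cancel against the lower-degree test spaces --- all while carrying the time-only weight $f_n$ through the spatial integration by parts.
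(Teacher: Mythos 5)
Your proposal is correct and follows essentially the same route as the paper's proof: a dimension count to show the system is square, then injectivity via testing the first equation with a spatial gradient, integrating by parts in space so the divergence term is annihilated by orthogonality to $\widetilde{W}_h(\mathcal{K})$, using the face equation with $\tau>0$ to conclude $\Pi_W v=0$ on $\mathcal{Q}_{\mathcal{K}}$, and finally invoking \cref{eq:w_norm} and \cref{eq:v_norm}. The only cosmetic difference is that you specialize immediately to the test function $\nabla\Pi_W v$ and extract the pointwise face identity $\boldsymbol{\Pi_V q}\cdot\boldsymbol{n}=\tau\Pi_W v$, whereas the paper works with arbitrary $w_h\in W_h^{\perp}(\mathcal{K})$ and specializes at the end; the two are equivalent.
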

\begin{proof}
  To see that $\Pi_h$ exists and is unique, we first verify that
  \cref{eq:def_projection} is a square system. First, recall that in
  two dimensions,
  \begin{equation}
    \dim P_p(K_j^n) = \tfrac{1}{2}(p+1)(p+2), \quad \dim Q_p(\mathcal{F}) = (p+1)^2.
  \end{equation}
  Thus,
  \begin{equation}
    \begin{split}
      \dim W_h(\mathcal{K}) &= \tfrac{1}{2}(p+1)^2(p+2),
      \\ %\quad
      \dim \boldsymbol{V}_h(\mathcal{K}) &= (p+1)^2(p+2),
      \\ %\quad
      \dim M_h(\mathcal{F}) &= (p+1)^2.
    \end{split}
  \end{equation}
  Moreover,
  \begin{equation}
    \begin{split}
      \dim \widetilde{W}_h(\mathcal{K}) &= \tfrac{1}{2}p(p+1)^2, \\
      \dim \widetilde{\boldsymbol{V}}_h(\mathcal{K}) &= p(p+1)^2.    
    \end{split}
  \end{equation}
  It follows that the number of unknowns in \cref{eq:def_projection}
  is
  \begin{equation}
    \dim W_h(\mathcal{K}) + \dim \boldsymbol{V}_h(\mathcal{K}) = \tfrac{3}{2}(p+1)^2(p+2).
  \end{equation}
  Since any space-time prism element $\mathcal{K}$ has only three
  faces in $\mathcal{Q}_{\mathcal{K}}$, the number of equations in
  \cref{eq:def_projection} is
  \begin{multline}
    \dim \widetilde{W}_h(\mathcal{K}) + \dim \widetilde{\boldsymbol{V}}_h(\mathcal{K}) + 3\dim M_h(\mathcal{F})
    \\
    = \tfrac{1}{2}p(p+1)^2 + p(p+1)^2 + 3(p+1)^2 = \tfrac{3}{2}(p+1)^2(p+2).
  \end{multline}
  Since the number of equations and unknowns in
  \cref{eq:def_projection} coincide, \cref{eq:def_projection} is a
  square system.

  Now, taking $\boldsymbol{q} = 0$ and $v = 0$ in
  \cref{eq:def_projection}, we see that
  \begin{subequations}
  	\begin{align}
	  	\label{eq:q_zero_rhs}
	  	\del[0]{\boldsymbol{\Pi_V q}, \boldsymbol{s}_h f_n}_{\mathcal{K}}
	  	&= 0
	  	&& \forall \boldsymbol{s}_h \in \widetilde{\boldsymbol{V}}_h(\mathcal{K}),
	  	\\
	  	\label{eq:v_zero_rhs}
	  	\del[0]{\Pi_W v, z_h f_n}_{\mathcal{K}}
	  	&= 0
	  	&& \forall z_h \in \widetilde{W}_h(\mathcal{K}),
	  	\\
	  	\label{eq:trace_zero_rhs}
	  	\langle \boldsymbol{\Pi_V q}\cdot\boldsymbol{n} - \tau \Pi_Wv, \sigma_h f_n \rangle_{\mathcal{F}}
	  	&= 0
	  	&& \forall \sigma_h \in M_h(\mathcal{F}),\ \mathcal{F} \subset \mathcal{Q}_{\mathcal{K}}.
  	\end{align}
  	\label{eq:projection_zero_rhs}
  \end{subequations}
  Let $w_h \in W_h^{\perp}(\mathcal{K})$. By \cref{eq:q_zero_rhs},
  since $\nabla w_h \in \widetilde{\boldsymbol{V}}_h(\mathcal{K})$, we
  have
  \begin{equation}
	  \del[0]{\boldsymbol{\Pi_V q}, \nabla w_h f_n}_{\mathcal{K}} = 0.
  \end{equation}
  Applying integration by parts in space,
  \begin{equation}
	 -\del[0]{\nabla \cdot \boldsymbol{\Pi_V q}, w_h f_n}_{\mathcal{K}} + \langle \boldsymbol{\Pi_V q} \cdot \boldsymbol{n}, w_h f_n \rangle_{\mathcal{Q}_{\mathcal{K}}} = 0.
  \end{equation}
  Since $\nabla \cdot \boldsymbol{\Pi_V q} \in \widetilde{W}_h(\mathcal{K})$, then $\del[1]{\nabla \cdot \boldsymbol{\Pi_V q}, w_h f_n}_{\mathcal{K}} = 0$, thus
  \begin{equation}
	  \label{eq:trace_q}
	  \langle \boldsymbol{\Pi_V q} \cdot \boldsymbol{n}, w_h f_n \rangle_{\mathcal{Q}_{\mathcal{K}}} = 0 \quad \forall w_h \in W_h^{\perp}(\mathcal{K}).
  \end{equation}
  By \cref{eq:trace_zero_rhs} and recalling that $\tau > 0$, we have
  \begin{equation}
	  \label{eq:trace_v}
	  \langle \Pi_W v, w_h f_n \rangle_{\mathcal{Q}_{\mathcal{K}}} = 0 \quad \forall w_h \in W_h^{\perp}(\mathcal{K}).
  \end{equation}
  Note that by \cref{eq:v_zero_rhs},
  $\Pi_W v \in W_h^{\perp}(\mathcal{K})$. Thus, taking $w_h = \Pi_W v$
  in \cref{eq:trace_v}, we see that $\Pi_W v = 0$ on
  $\mathcal{Q}_{\mathcal{K}}$. Then, using \cref{eq:w_norm} we
  conclude that $\Pi_W v = 0$ in $\mathcal{K}$. Taking
  $\sigma_h = \boldsymbol{\Pi_V q} \cdot \boldsymbol{n}$ in
  \cref{eq:trace_zero_rhs}, since $\Pi_W v = 0$, we see that
  $\boldsymbol{\Pi_V q}\cdot \boldsymbol{n} = 0$ on
  $\mathcal{Q}_{\mathcal{K}}$. Using \cref{eq:v_norm}, we conclude
  that $\boldsymbol{\Pi_V q} = 0$ in $\mathcal{K}$. The result
  follows.
\end{proof}

In addition to the projection $\Pi_h$, we define also $P^{f_n}_M$ as
the $f_n$-weighted $L^2$-projection onto $M_h$, so that
\begin{equation}
  \langle P^{f_n}_M v, \sigma_h f_n \rangle_{\mathcal{F}} = \langle v, \sigma_h f_n \rangle_{\mathcal{F}} \qquad
  \forall \sigma_h \in M_h(\mathcal{F}), \mathcal{F} \subset \mathcal{Q}_{\mathcal{K}}.
\end{equation}
Note that the domain of $P^{f_n}_M$ is $L^2(\Gamma_{\mathcal{Q}}^n)$.

We next show approximation properties of $\Pi_h$. For this, let
$P^{f_n}_W$, $P^{f_n}_{\widetilde{W}}$, and $\boldsymbol{P_V}^{f_n}$
denote, respectively, the $f_n$-weighted $L^2$-projections onto $W_h$,
$\widetilde{W}_h$, and $\boldsymbol{V}_h$.

\begin{theorem}[Approximation properties of the projection]
  \label{thm:proj_bounds}
  Assume that $\tau$ is uniformly bounded above and below by constants
  $C_{\tau}^{\max}$ and $C_{\tau}^{\min}$, respectively. The
  projection $\Pi_h$ satisfies the following bounds
  \begin{subequations}
    \begin{equation}
      \label{eq:estimate_v_PIWv}
      \begin{split}
	      \norm[0]{v - \Pi_W v}_{f_n,\mathcal{K}}
	      \leq &
	      \norm[0]{v - P^{f_n}_W v}_{f_n,\mathcal{K}}
	      + C C_{\tau} h_K^{1/2}\norm[0]{v - P^{f_n}_W v}_{f_n,\mathcal{Q}_{\mathcal{K}}}
	      \\
	      &+ \frac{C}{C_{\tau}^{\min}}h_K \norm[0]{\nabla\cdot \boldsymbol{q} - P_{\widetilde{W}}^{f_n}\nabla\cdot\boldsymbol{q}}_{f_n,\mathcal{K}},
      \end{split}
    \end{equation}
    \begin{equation}
      \label{eq:estimate_q_PIVq}
      \begin{split}
        \norm[0]{\boldsymbol{q} - \boldsymbol{\Pi_V q}}_{f_n,\mathcal{K}}
        \leq &
        \norm[0]{\boldsymbol{q} - \boldsymbol{P_V}^{f_n}\boldsymbol{q}}_{f_n,\mathcal{K}}
        + C h_K^{1/2} \norm[0]{\del[0]{\boldsymbol{q} - \boldsymbol{P_V}^{f_n}\boldsymbol{q}} \cdot \boldsymbol{n}}_{f_n, \mathcal{Q}_{\mathcal{K}}}
        \\
        &+ C C_{\tau}^{\max}h_K^{1/2}\norm[0]{v - P^{f_n}_W v}_{f_n, \mathcal{Q}_{\mathcal{K}}},        
      \end{split}
    \end{equation}
    \begin{equation}
      \label{eq:estimate_q_PIVq_K0}
      \begin{split}
        \norm[0]{\boldsymbol{q} - \boldsymbol{\Pi_V q}}_{f_n,\partial \mathcal{K}_0}
        \leq &
        \norm[0]{\boldsymbol{q} - \boldsymbol{P_V}^{f_n}\boldsymbol{q}}_{f_n,\partial \mathcal{K}_0}
        + C \Delta t^{-1/2} \norm[0]{\boldsymbol{q} - \boldsymbol{P_V}^{f_n}\boldsymbol{q}}_{f_n, \mathcal{K}}
        \\
        &+ C \Delta t^{-1/2} \norm[0]{\boldsymbol{q} - \boldsymbol{\Pi_V q}}_{f_n, \mathcal{K}},
      \end{split}
    \end{equation}
  \end{subequations}
  where $C_{\tau} = C_{\tau}^{\max}/C_{\tau}^{\min}$.
\end{theorem}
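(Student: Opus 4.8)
The plan is to adapt the projection-based argument of \cite{Cockburn:2010} to the weighted inner products, by comparing $\Pi_h$ with the $f_n$-weighted $L^2$-projections and exploiting the orthogonal complements $W_h^{\perp}(\mathcal{K})$ and $\boldsymbol{V}_h^{\perp}(\mathcal{K})$. I would set $\delta_v := \Pi_W v - P^{f_n}_W v$ and $\boldsymbol{\delta_q} := \boldsymbol{\Pi_V q} - \boldsymbol{P_V}^{f_n}\boldsymbol{q}$. Testing the first two equations of \cref{eq:def_projection} against $\widetilde{\boldsymbol{V}}_h(\mathcal{K})$ and $\widetilde{W}_h(\mathcal{K})$ and subtracting the defining identities of $P^{f_n}_W$ and $\boldsymbol{P_V}^{f_n}$ shows at once that $\delta_v \in W_h^{\perp}(\mathcal{K})$ and $\boldsymbol{\delta_q} \in \boldsymbol{V}_h^{\perp}(\mathcal{K})$, so that \cref{eq:wv_fn_estimates_a} and \cref{eq:wv_fn_estimates_b} apply to them. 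Writing $v - \Pi_W v = \del[0]{v - P^{f_n}_W v} - \delta_v$ and $\boldsymbol{q} - \boldsymbol{\Pi_V q} = \del[0]{\boldsymbol{q} - \boldsymbol{P_V}^{f_n}\boldsymbol{q}} - \boldsymbol{\delta_q}$, the triangle inequality reduces \cref{eq:estimate_v_PIWv} and \cref{eq:estimate_q_PIVq} to bounding $\norm[0]{\delta_v}_{f_n,\mathcal{Q}_{\mathcal{K}}}$ and $\norm[0]{\boldsymbol{\delta_q}\cdot\boldsymbol{n}}_{f_n,\mathcal{Q}_{\mathcal{K}}}$, respectively.

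The two relations I would then record are as follows. Subtracting the weighted $L^2$-projection identities from the third equation of \cref{eq:def_projection} gives, for all $\sigma_h \in M_h(\mathcal{F})$,
\begin{equation*}
\langle \boldsymbol{\delta_q}\cdot\boldsymbol{n} - \tau\delta_v, \sigma_h f_n \rangle_{\mathcal{F}} = \langle \del[0]{\boldsymbol{q} - \boldsymbol{P_V}^{f_n}\boldsymbol{q}}\cdot\boldsymbol{n} + \tau\del[0]{P^{f_n}_W v - v}, \sigma_h f_n \rangle_{\mathcal{F}}.
\end{equation*}
Moreover, since $\nabla w_h \in \widetilde{\boldsymbol{V}}_h(\mathcal{K})$ for every $w_h \in W_h(\mathcal{K})$, the first equation of \cref{eq:def_projection} with $\boldsymbol{s}_h = \nabla w_h$ followed by spatial integration by parts (the weight $f_n$ depends only on time, and the spatial normal vanishes on $\partial\mathcal{K}_0$) yields
\begin{equation*}
\langle \del[0]{\boldsymbol{\Pi_V q} - \boldsymbol{q}}\cdot\boldsymbol{n}, f_n w_h \rangle_{\mathcal{Q}_{\mathcal{K}}} = \del[1]{\nabla\cdot\del[0]{\boldsymbol{\Pi_V q} - \boldsymbol{q}}, f_n w_h}_{\mathcal{K}} \quad \forall w_h \in W_h(\mathcal{K}).
\end{equation*}

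For \cref{eq:estimate_v_PIWv} I would take $\sigma_h = w_h = \delta_v$ (admissible since the trace of $\delta_v$ on each face lies in $M_h(\mathcal{F})$), sum over $\mathcal{Q}_{\mathcal{K}}$, and substitute the integration-by-parts relation. The terms involving $\boldsymbol{q} - \boldsymbol{P_V}^{f_n}\boldsymbol{q}$ cancel, leaving $\langle \tau\delta_v, \delta_v f_n \rangle_{\mathcal{Q}_{\mathcal{K}}} = \del[1]{\nabla\cdot\del[0]{\boldsymbol{\Pi_V q} - \boldsymbol{q}}, f_n\delta_v}_{\mathcal{K}} + \langle \tau\del[0]{v - P^{f_n}_W v}, \delta_v f_n \rangle_{\mathcal{Q}_{\mathcal{K}}}$. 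Because $\nabla\cdot\boldsymbol{\Pi_V q} \in \widetilde{W}_h(\mathcal{K})$ is $f_n$-orthogonal to $\delta_v \in W_h^{\perp}(\mathcal{K})$, the volume term collapses to $-\del[1]{\nabla\cdot\boldsymbol{q} - P^{f_n}_{\widetilde{W}}\nabla\cdot\boldsymbol{q}, f_n\delta_v}_{\mathcal{K}}$. Using $\tau \geq C_{\tau}^{\min}$ on the left, Cauchy--Schwarz on the right, and \cref{eq:wv_fn_estimates_a} to trade $\norm[0]{\delta_v}_{f_n,\mathcal{K}}$ for $h_K^{1/2}\norm[0]{\delta_v}_{f_n,\mathcal{Q}_{\mathcal{K}}}$, I obtain the bound on $\norm[0]{\delta_v}_{f_n,\mathcal{Q}_{\mathcal{K}}}$ that, combined with the triangle inequality of the first paragraph, gives exactly \cref{eq:estimate_v_PIWv}.

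The key step, and the one I expect to be the main obstacle, is \cref{eq:estimate_q_PIVq}: testing the face relation with $\sigma_h = \boldsymbol{\delta_q}\cdot\boldsymbol{n}$ produces a coupling term $\langle \tau\delta_v, \boldsymbol{\delta_q}\cdot\boldsymbol{n}\, f_n \rangle_{\mathcal{Q}_{\mathcal{K}}}$, and the bound will contain no divergence contribution only if this term vanishes. I would show it does: since $\nabla\delta_v \in \widetilde{\boldsymbol{V}}_h(\mathcal{K})$, the element $\boldsymbol{\delta_q} \in \boldsymbol{V}_h^{\perp}(\mathcal{K})$ is $f_n$-orthogonal to it, and spatial integration by parts gives
\begin{equation*}
\langle \boldsymbol{\delta_q}\cdot\boldsymbol{n}, f_n\delta_v \rangle_{\mathcal{Q}_{\mathcal{K}}} = \del[1]{\nabla\cdot\boldsymbol{\delta_q}, f_n\delta_v}_{\mathcal{K}} = 0,
\end{equation*}
the last equality because $\nabla\cdot\boldsymbol{\delta_q} \in \widetilde{W}_h(\mathcal{K})$ while $\delta_v \in W_h^{\perp}(\mathcal{K})$; since $\tau$ is constant on $\mathcal{K}$ the coupling term is then zero. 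Consequently $\norm[0]{\boldsymbol{\delta_q}\cdot\boldsymbol{n}}_{f_n,\mathcal{Q}_{\mathcal{K}}}$ is controlled, via Cauchy--Schwarz, by $\norm[0]{\del[0]{\boldsymbol{q} - \boldsymbol{P_V}^{f_n}\boldsymbol{q}}\cdot\boldsymbol{n}}_{f_n,\mathcal{Q}_{\mathcal{K}}}$ and $C_{\tau}^{\max}\norm[0]{v - P^{f_n}_W v}_{f_n,\mathcal{Q}_{\mathcal{K}}}$ alone, and \cref{eq:wv_fn_estimates_b} with the triangle inequality delivers \cref{eq:estimate_q_PIVq}. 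Finally, \cref{eq:estimate_q_PIVq_K0} follows by applying the componentwise inverse trace inequality \cref{eq:inv_trace_ineq_K0} to the polynomial $\boldsymbol{\delta_q}$ (using that $f_n$ is bounded above and below on a slab to pass between weighted and unweighted norms) and bounding $\norm[0]{\boldsymbol{\delta_q}}_{f_n,\mathcal{K}} \leq \norm[0]{\boldsymbol{q} - \boldsymbol{\Pi_V q}}_{f_n,\mathcal{K}} + \norm[0]{\boldsymbol{q} - \boldsymbol{P_V}^{f_n}\boldsymbol{q}}_{f_n,\mathcal{K}}$.
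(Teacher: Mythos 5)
Your proposal is correct and follows essentially the same route as the paper: the same decomposition into $\delta_v = \Pi_W v - P^{f_n}_W v$ and $\boldsymbol{\delta_q} = \boldsymbol{\Pi_V q} - \boldsymbol{P_V}^{f_n}\boldsymbol{q}$, membership in $W_h^{\perp}(\mathcal{K})$ and $\boldsymbol{V}_h^{\perp}(\mathcal{K})$ so that \cref{eq:wv_fn_estimates_a} and \cref{eq:wv_fn_estimates_b} apply, the same spatial integration-by-parts and orthogonality arguments (in particular the vanishing of $\langle \tau\delta_v, \boldsymbol{\delta_q}\cdot\boldsymbol{n}\, f_n\rangle_{\mathcal{Q}_{\mathcal{K}}}$, which is the paper's \cref{eq:deltaq_dot_n}), and the same triangle-inequality-plus-inverse-trace argument for \cref{eq:estimate_q_PIVq_K0}. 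The only difference is bookkeeping: you record one integration-by-parts identity for $\boldsymbol{\Pi_V q} - \boldsymbol{q}$ tested against all of $W_h(\mathcal{K})$ and then specialize, whereas the paper states the two identities \cref{eq:deltaq_dot_n} and \cref{eq:Iq_dot_n} separately for $w_h \in W_h^{\perp}(\mathcal{K})$; the resulting estimates are identical.
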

\begin{proof}
  Let
  $\boldsymbol{\delta_q} := \boldsymbol{\Pi_V q} -
  \boldsymbol{P_V}^{f_n}\boldsymbol{q}$ and
  $\delta_v := \Pi_W v - P^{f_n}_W v$. Note that
  $\boldsymbol{\delta_q}$ and $\delta_v$ satisfy the following
  equations
  \begin{subequations}
    \begin{align}
      \label{eq:deltaq}
      \del[0]{\boldsymbol{\delta_q}, \boldsymbol{s}_h f_n}_{\mathcal{K}}
      &= 0
      && \forall \boldsymbol{s}_h \in \widetilde{\boldsymbol{V}}_h(\mathcal{K}),
      \\
      \label{eq:deltav}
      \del[0]{\delta_v, z_h f_n}_{\mathcal{K}}
      &= 0
      && \forall z_h \in \widetilde{W}_h(\mathcal{K}),
      \\
      \label{eq:deltaq_detav_faces}
      \langle \del[0]{\boldsymbol{\delta_q}\cdot\boldsymbol{n} - \tau \delta_v}, \sigma_h f_n \rangle_{\mathcal{F}}
      &= \langle \del[0]{\boldsymbol{I_q}\cdot\boldsymbol{n} - \tau I_v}, \sigma_h f_n \rangle_{\mathcal{F}}
      && \forall \sigma_h \in M_h(\mathcal{F}),\ \mathcal{F} \subset \mathcal{Q}_{\mathcal{K}},         
    \end{align}
    \label{eq:deltaq_deltav_def}
  \end{subequations}
  where
  $\boldsymbol{I_q} = \boldsymbol{q} -
  \boldsymbol{P_V}^{f_n}\boldsymbol{q}$ and $I_v = v - P^{f_n}_W v$.

  We first prove \cref{eq:estimate_v_PIWv}. Notice that for any
  $w_h \in W_h^{\perp}(\mathcal{K})$,
  $w_h \vert_{\mathcal{F}} \in M_h(\mathcal{F})$, for any
  $\mathcal{F} \subset \mathcal{Q}_{\mathcal{K}}$. Therefore, by
  \cref{eq:deltaq_detav_faces},
  \begin{equation}
    \label{eq:deltaq_deltav_faces_Wperp}
    \langle \del[0]{\boldsymbol{\delta_q}\cdot\boldsymbol{n} - \tau \delta_v}, w_h f_n \rangle_{\mathcal{Q}_{\mathcal{K}}}
    = \langle \del[0]{\boldsymbol{I_q}\cdot\boldsymbol{n} - \tau I_v}, w_h f_n \rangle_{\mathcal{Q}_{\mathcal{K}}}
    \quad \forall w_h \in W_h^{\perp}(\mathcal{K}).  
  \end{equation}
  Using integration by parts in space, note that
  \begin{equation}
    \langle \boldsymbol{\delta_q} \cdot \boldsymbol{n}, w_h f_n \rangle_{\mathcal{Q}_{\mathcal{K}}}
    = \del[1]{\nabla \cdot \boldsymbol{\delta_q}, w_h f_n}_{\mathcal{K}}
    + \del[1]{\boldsymbol{\delta_q}, f_n \nabla w_h}_{\mathcal{K}}.
  \end{equation}
  Since
  $\nabla \cdot \boldsymbol{\delta_q} \in
  \widetilde{W}_h(\mathcal{K})$ and
  $w_h \in W_h^{\perp}(\mathcal{K})$, then
  $\del[1]{\nabla \cdot \boldsymbol{\delta_q}, w_h f_n}_{\mathcal{K}}
  = 0$. Also, since
  $\nabla w_h \in \widetilde{\boldsymbol{V}}_h(\mathcal{K})$, by
  \cref{eq:deltaq},
  $\del[1]{\boldsymbol{\delta_q}, f_n \nabla w_h}_{\mathcal{K}} =
  0$. Thus,
  \begin{equation}
    \label{eq:deltaq_dot_n}
    \langle \boldsymbol{\delta_q} \cdot \boldsymbol{n}, w_h f_n \rangle_{\mathcal{Q}_{\mathcal{K}}} = 
    \quad \forall w_h \in W_h^{\perp}(\mathcal{K}).    
  \end{equation}
  Similarly,
  \begin{equation}
    \langle \boldsymbol{I_q} \cdot \boldsymbol{n}, w_h f_n \rangle_{\mathcal{Q}_{\mathcal{K}}}
    = \del[1]{\nabla \cdot \boldsymbol{I_q}, w_h f_n}_{\mathcal{K}}
    + \del[1]{\boldsymbol{I_q}, f_n \nabla w_h}_{\mathcal{K}}
    \quad \forall w_h \in W_h^{\perp}(\mathcal{K}).    
  \end{equation}
  By definition of $\boldsymbol{I_q}$, the second term on the right
  hand side is zero. Furthermore, note that
  \begin{equation}
    \del[1]{\nabla \cdot \boldsymbol{I_q}, w_h f_n}_{\mathcal{K}}
    = \del[1]{\nabla \cdot \boldsymbol{q}, w_h f_n}_{\mathcal{K}}
    - \del[1]{\nabla \cdot \boldsymbol{P_V}^{f_n}\boldsymbol{q}, w_h f_n}_{\mathcal{K}},
  \end{equation}
  and
  $\del[1]{\nabla \cdot \boldsymbol{P_V}^{f_n}\boldsymbol{q}, w_h f_n}_{\mathcal{K}} =
  0$ since
  $\nabla \cdot \boldsymbol{P_V}^{f_n}\boldsymbol{q} \in \widetilde{W}_h(\mathcal{K})$
  and $w_h \in W_h^{\perp}(\mathcal{K})$. Therefore,
  \begin{equation}
    \del[1]{\nabla \cdot \boldsymbol{I_q}, w_h f_n}_{\mathcal{K}}
    = \del[1]{\nabla \cdot \boldsymbol{q}, w_h f_n}_{\mathcal{K}}.
  \end{equation}
  Also, since
  $P^{f_n}_{\widetilde{W}}(\nabla \cdot \boldsymbol{q}) \in
  \widetilde{W}_h(\mathcal{K})$ and
  $(\widetilde{w}_h, w_hf_n)_{\mathcal{K}} = 0$ for all
  $\widetilde{w}_h\in \widetilde{W}_h(\mathcal{K})$, we can write
  \begin{equation}
	  \del[1]{\nabla \cdot \boldsymbol{I_q}, w_h f_n}_{\mathcal{K}}
	  = \del[1]{\del[1]{ \mathit{Id} - P^{f_n}_{\widetilde{W}}}\nabla \cdot \boldsymbol{q}, w_h f_n}_{\mathcal{K}},
  \end{equation}
  where $\mathit{Id}$ denotes the identity operator. Thus,
  \begin{equation}
    \label{eq:Iq_dot_n}
    \langle \boldsymbol{I_q} \cdot \boldsymbol{n}, w_h f_n \rangle_{\mathcal{Q}_{\mathcal{K}}}
    = \del[1]{\del[1]{\mathit{Id} - P^{f_n}_{\widetilde{W}}}\nabla \cdot \boldsymbol{q}, w_h f_n}_{\mathcal{K}}
    \quad \forall w_h \in W_h^{\perp}(\mathcal{K}).     
  \end{equation}
  Using \cref{eq:deltaq_dot_n} and \cref{eq:Iq_dot_n} in
  \cref{eq:deltaq_deltav_faces_Wperp} and rearranging terms, we obtain
  \begin{equation}
    \label{eq:delta_v_simplified}
    \langle \tau \delta_v, w_h f_n \rangle_{\mathcal{Q}_{\mathcal{K}}}
    = \langle \tau I_v, w_h f_n \rangle_{\mathcal{Q}_{\mathcal{K}}}
    + \del[1]{\del[1]{P^{f_n}_{\widetilde{W}} - \mathit{Id} }\nabla \cdot \boldsymbol{q}, w_h f_n}_{\mathcal{K}}
    \quad \forall w_h \in W_h^{\perp}(\mathcal{K}).
  \end{equation}
  By \cref{eq:deltav}, $\delta_v \in W_h^{\perp}(\mathcal{K})$. Taking
  $w_h = \delta_v$ in \cref{eq:delta_v_simplified} we obtain
  \begin{equation}
    \label{eq:w_equals_delta_v}
    \langle \tau \delta_v, \delta_v f_n \rangle_{\mathcal{Q}_{\mathcal{K}}}
    = \langle \tau I_v, \delta_v f_n \rangle_{\mathcal{Q}_{\mathcal{K}}}
    + \del[1]{\del[1]{P^{f_n}_{\widetilde{W}} - \mathit{Id}}\nabla \cdot \boldsymbol{q}, \delta_v f_n}_{\mathcal{K}}.
  \end{equation}
  Apply the Cauchy--Schwarz inequality to the right hand side of
  \cref{eq:w_equals_delta_v}:
  \begin{equation}
    \label{eq:w_equals_delta_v2}
    \langle \tau \delta_v, \delta_v f_n \rangle_{\mathcal{Q}_{\mathcal{K}}}
    \leq \norm[0]{\tau I_v}_{f_n,\mathcal{Q}_{\mathcal{K}}} \norm[0]{\delta_v}_{f_n,\mathcal{Q}_{\mathcal{K}}}
    + \norm[0]{\del[0]{\mathit{Id} - P^{f_n}_{\widetilde{W}}} \nabla \cdot \boldsymbol{q}}_{f_n,\mathcal{K}}
    \norm[0]{\delta_v}_{f_n,\mathcal{K}}.    
  \end{equation}
  Using \cref{eq:wv_fn_estimates_a} on the right hand side,
  \begin{equation}
  \label{eq:w_equals_delta_v3}
  \langle \tau \delta_v, \delta_v f_n \rangle_{\mathcal{Q}_{\mathcal{K}}}
  \leq \norm[0]{\tau I_v}_{f_n,\mathcal{Q}_{\mathcal{K}}}\norm[0]{\delta_v}_{f_n,\mathcal{Q}_{\mathcal{K}}}
  + C h_K^{1/2}\norm[0]{\del[1]{\mathit{Id} - P^{f_n}_{\widetilde{W}}}\nabla \cdot \boldsymbol{q}}_{f_n,\mathcal{K}}
  \norm[0]{\delta_v}_{f_n,\mathcal{Q}_{\mathcal{K}}}.  
  \end{equation}
  Since $\tau$ is uniformly bounded above and below by constants
  $C_{\tau}^{\max}$ and $C_{\tau}^{\min}$, respectively,
  \begin{equation}
	  \label{eq:w_equals_delta_v4}
	  C_{\tau}^{\min} \norm[0]{\delta_v}_{f_n,\mathcal{Q}_{\mathcal{K}}}^2
	  \leq C_{\tau}^{\max} \norm[0]{I_v}_{f_n,\mathcal{Q}_{\mathcal{K}}}\norm[0]{\delta_v}_{f_n,\mathcal{Q}_{\mathcal{K}}}
	  + C h_K^{1/2}\norm[0]{\del[1]{\mathit{Id} - P^{f_n}_{\widetilde{W}}}\nabla \cdot \boldsymbol{q}}_{f_n,\mathcal{K}}
	  \norm[0]{\delta_v}_{f_n,\mathcal{Q}_{\mathcal{K}}}.    
  \end{equation}
  Canceling terms and using \cref{eq:wv_fn_estimates_a} on the left
  hand side, we obtain the following bound
  \begin{equation}
	  \label{eq:w_equals_delta_v5}
	  \dfrac{h_K^{-1/2}}{C} C_{\tau}^{\min} \norm[0]{\delta_v}_{f_n,\mathcal{K}}
	  \leq C_{\tau}^{\max} \norm[0]{I_v}_{f_n,\mathcal{Q}_{\mathcal{K}}}
	  + C h_K^{1/2}\norm[0]{\del[1]{\mathit{Id} - P^{f_n}_{\widetilde{W}}}\nabla \cdot \boldsymbol{q}}_{f_n,\mathcal{K}}.
  \end{equation}
  Rearranging,
  \begin{equation}
	  \label{eq:final_delta_v_bound}
	  \norm[0]{\delta_v}_{f_n,\mathcal{K}}
	  \leq C C_{\tau}h_K^{1/2}\norm[0]{I_v}_{f_n,\mathcal{Q}_{\mathcal{K}}}
	  + \dfrac{C}{C_{\tau}^{\min}} h_K\norm[0]{\del[1]{\mathit{Id} - P^{f_n}_{\widetilde{W}}}\nabla \cdot \boldsymbol{q}}_{f_n,\mathcal{K}}.    
  \end{equation}
  The estimate \cref{eq:estimate_v_PIWv} follows by applying the
  triangle inequality.

  We next prove \cref{eq:estimate_q_PIVq}. We first find an estimate
  for $\boldsymbol{\delta_q}$. Note that by \cref{eq:deltaq},
  $\boldsymbol{\delta_q}$ belongs to
  $\boldsymbol{V}_h^{\perp}(\mathcal{K})$. Therefore, by
  \cref{eq:wv_fn_estimates_b}, we have
  \begin{equation}
  	\label{eq:deltaq_initial_estimate}
    \norm[0]{\boldsymbol{\delta_q}}_{f_n,\mathcal{K}}
    \leq
    C h_K^{1/2} \norm[0]{\boldsymbol{\delta_q} \cdot \boldsymbol{n}}_{f_n, \mathcal{Q}_{\mathcal{K}}}.
  \end{equation}
  Taking $\sigma_h = \boldsymbol{\delta_q} \cdot \boldsymbol{n}$ in
  \cref{eq:deltaq_detav_faces}, we obtain
  \begin{equation}
    \norm[0]{\boldsymbol{\delta_q} \cdot \boldsymbol{n}}_{f_n, \mathcal{Q}_{\mathcal{K}}}^2 = \langle \boldsymbol{I_q}\cdot\boldsymbol{n}, \boldsymbol{\delta_q} \cdot \boldsymbol{n} f_n \rangle_{\mathcal{Q}_{\mathcal{K}}} - \tau \langle I_v, \boldsymbol{\delta_q} \cdot \boldsymbol{n} f_n \rangle_{\mathcal{Q}_{\mathcal{K}}} + \tau \langle \delta_v, \boldsymbol{\delta_q} \cdot \boldsymbol{n} f_n \rangle_{\mathcal{Q}_{\mathcal{K}}}.
  \end{equation}
  Since $\delta_v \in W_h^{\perp}(\mathcal{K})$, by \cref{eq:deltaq_dot_n}, $\langle \delta_v, \boldsymbol{\delta_q} \cdot \boldsymbol{n} f_n \rangle_{\mathcal{Q}_{\mathcal{K}}} = 0$. Thus,
  \begin{equation}
    \norm[0]{\boldsymbol{\delta_q} \cdot \boldsymbol{n}}_{f_n, \mathcal{Q}_{\mathcal{K}}}^2 = \langle \boldsymbol{I_q}\cdot\boldsymbol{n}, \boldsymbol{\delta_q} \cdot \boldsymbol{n} f_n \rangle_{\mathcal{Q}_{\mathcal{K}}} - \tau \langle I_v, \boldsymbol{\delta_q} \cdot \boldsymbol{n} f_n \rangle_{\mathcal{Q}_{\mathcal{K}}}.
  \end{equation}
  Applying the Cauchy--Schwarz inequality and substituting into \cref{eq:deltaq_initial_estimate}, we obtain
  \begin{equation}
    \norm[0]{\boldsymbol{\delta_q}}_{f_n,\mathcal{K}}
    \leq
    C h_K^{1/2} \del[1]{\norm[0]{\boldsymbol{I_q} \cdot \boldsymbol{n}}_{f_n, \mathcal{Q}_{\mathcal{K}}}
      + C_{\tau}^{\max}\norm[0]{I_v}_{f_n, \mathcal{Q}_{\mathcal{K}}}}.
  \end{equation}
  The estimate \cref{eq:estimate_q_PIVq} follows by applying the
  triangle inequality.
  
  Finally, we show \cref{eq:estimate_q_PIVq_K0}. Note that, by the
  triangle inequality,
  \begin{equation}
    \norm[0]{\boldsymbol{q} - \boldsymbol{\Pi_V q}}_{f_n,\partial \mathcal{K}_0}
    \leq \norm[0]{\boldsymbol{q} - \boldsymbol{P_V}^{f_n} \boldsymbol{q}}_{f_n,\partial \mathcal{K}_0} + \norm[0]{\boldsymbol{P_V}^{f_n} \boldsymbol{q} - \boldsymbol{\Pi_V q}}_{f_n,\partial \mathcal{K}_0}.
  \end{equation}
  Next, we apply the inverse trace inequality in
  \cref{eq:inv_trace_ineq_K0} to the second term on the right hand side
  to obtain:
  \begin{equation}
    \norm[0]{\boldsymbol{q} - \boldsymbol{\Pi_V q}}_{f_n,\partial \mathcal{K}_0}
    \leq \norm[0]{\boldsymbol{q} - \boldsymbol{P_V}^{f_n} \boldsymbol{q}}_{f_n,\partial \mathcal{K}_0} + C\Delta t^{-1/2} \norm[0]{\boldsymbol{P_V}^{f_n} \boldsymbol{q} - \boldsymbol{\Pi_V q}}_{f_n,\mathcal{K}}.
  \end{equation}
  The result follows after adding and subtracting $\boldsymbol{q}$ to
  the second term on the right hand side and applying the triangle
  inequality.
\end{proof}

%We remark that if $W_h^{\perp}(\mathcal{K}) = \cbr[0]{0}$ and
%$\boldsymbol{V}_h^{\perp}(\mathcal{K}) = \cbr[0]{0}$, then
%$\Pi_Wv = P_W^{f_n}v$ and
%$\boldsymbol{\Pi_V}\boldsymbol{q} =
%\boldsymbol{P_V}^{f_n}\boldsymbol{q}$.

We next prove the equivalence between the standard and the weighted
$L^2$-projections.

\begin{lemma}
  \label{lem:equiv_proj}
  Let $\mathcal{K} \in \mathcal{T}^n$ and
  $\mathcal{F} \in \mathcal{F}_{\mathcal{Q}}^n$. Let
  $P_W, \boldsymbol{P_V}$ and $P_M$ denote the
  $L^2$-orthogonal projections onto $W_h$,
  $\boldsymbol{V}_h$ and $M_h$, respectively. If
  $f_n$ is uniformly bounded, then the following relations are
  satisfied
  \begin{subequations}
    \begin{equation}
      \label{eq:equivW}
      \norm[0]{v - P^{f_n}_W v}_{f_n, \mathcal{K}}
      \leq C \norm[0]{v - P_W v}_{\mathcal{K}},
    \end{equation}
    \begin{equation}
      \label{eq:equivV}
      \norm[0]{\boldsymbol{q} - \boldsymbol{P_V}^{f_n}\boldsymbol{q}}_{f_n, \mathcal{K}}
      \leq C \norm[0]{\boldsymbol{q} - \boldsymbol{P_V q}}_{\mathcal{K}},
    \end{equation}
    \begin{equation}
      \label{eq:equivM}
      \norm[0]{v - P^{f_n}_M v}_{f_n, \mathcal{F}}
      \leq C \norm[0]{v - P_M v}_{\mathcal{F}}.
    \end{equation}
  \end{subequations}
\end{lemma}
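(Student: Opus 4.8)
The plan is to exploit the fact that each of the weighted projections is, by its very definition, the orthogonal projection with respect to a weighted inner product, and is therefore the \emph{best approximation} in the corresponding weighted norm. Since $f_n(t) = e^{-\alpha(t-t_n)} > 0$, the bilinear form $\del{f_n\,\cdot\,,\,\cdot\,}_{\mathcal{K}}$ is a genuine inner product on $L^2(\mathcal{K})$ inducing exactly the norm $\norm{\cdot}_{f_n,\mathcal{K}}$, and $P^{f_n}_W v$ is its orthogonal projection onto $W_h(\mathcal{K})$. The whole argument then reduces to two short steps.

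First I would record the best-approximation property. By the definition of $P^{f_n}_W$, the error $v - P^{f_n}_W v$ is $\del{f_n\,\cdot\,,\,\cdot\,}_{\mathcal{K}}$-orthogonal to all of $W_h(\mathcal{K})$, so
\[
  \norm{v - P^{f_n}_W v}_{f_n,\mathcal{K}}
  = \min_{w_h \in W_h(\mathcal{K})} \norm{v - w_h}_{f_n,\mathcal{K}}
  \leq \norm{v - P_W v}_{f_n,\mathcal{K}},
\]
where the inequality follows by taking the admissible competitor $w_h = P_W v \in W_h(\mathcal{K})$.

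Second, I would pass from the weighted norm on the right to the unweighted one. Since $f_n$ is uniformly bounded, there is a constant $C_{f_n} > 0$ with $\sup_t f_n(t) \leq C_{f_n}$ for all $n$, whence
\[
  \norm{v - P_W v}_{f_n,\mathcal{K}}^2
  = \del{f_n(v - P_W v),\, v - P_W v}_{\mathcal{K}}
  \leq C_{f_n}\,\norm{v - P_W v}_{\mathcal{K}}^2.
\]
Combining the two displays yields \cref{eq:equivW} with $C = C_{f_n}^{1/2}$. The estimate \cref{eq:equivV} follows verbatim, replacing scalar test functions by vector ones and $W_h$ by $\boldsymbol{V}_h$, while \cref{eq:equivM} follows from the same two steps carried out on the face $\mathcal{F}$: the form $\langle f_n\,\cdot\,,\,\cdot\,\rangle_{\mathcal{F}}$ is again an inner product because $f_n > 0$, $P^{f_n}_M$ is its orthogonal projection, and the trace of $f_n$ on $\mathcal{F}$ is bounded above by the same $C_{f_n}$.

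There is no substantial obstacle in this argument; the only point worth flagging is the precise role of the two hypotheses on $f_n$. Positivity of $f_n$ is what makes $\del{f_n\,\cdot\,,\,\cdot\,}$ an inner product, so that the best-approximation characterization of the first step is valid, while the upper bound on $f_n$ is what enables the passage from the weighted to the unweighted norm in the second step. In particular no lower bound on $f_n$ is required, which is why the hypothesis in the statement is merely that $f_n$ be uniformly bounded.
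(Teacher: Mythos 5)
Your proof is correct, and it rests on exactly the same two ingredients as the paper's proof: the defining orthogonality of the weighted projection and the uniform upper bound on $f_n$. The difference is in how the first ingredient is deployed. The paper works with the intermediate quantity $P^{f_n}_W v - P_W v$: it derives the identity $(P^{f_n}_W v - P_W v, f_n w_h)_{\mathcal{K}} = (v - P_W v, f_n w_h)_{\mathcal{K}}$ for all $w_h \in W_h$, tests with $w_h = P^{f_n}_W v - P_W v$, applies Cauchy--Schwarz to get $\| P^{f_n}_W v - P_W v \|_{f_n,\mathcal{K}} \leq \| v - P_W v \|_{f_n,\mathcal{K}}$, and concludes with the triangle inequality. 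You instead invoke the best-approximation characterization of the orthogonal projection in the inner product $(f_n \cdot, \cdot)_{\mathcal{K}}$ and use $P_W v$ as a competitor, which collapses the paper's three steps into one, avoids the intermediate difference term entirely, and yields the marginally sharper constant $C_{f_n}^{1/2}$ in place of the paper's $2C_{f_n}^{1/2}$. Both routes are legitimate since, as you note, the paper's definition of $P^{f_n}_W$ (and of $P^{f_n}_M$ on faces) is precisely orthogonal projection with respect to the weighted inner product, positivity of $f_n$ making that form a genuine inner product. Your closing remark that no lower bound on $f_n$ is needed is accurate and applies equally to the paper's argument; on a single slab $f_n$ in fact satisfies $e^{-\alpha \Delta t} \leq f_n \leq 1$, but neither proof uses the lower bound.
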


\begin{proof}
  We will only show \cref{eq:equivW}. The proofs for \cref{eq:equivV}
  and \cref{eq:equivM} are analogous. Note that by definition of
  $P_W^{f_n}$, for all $w_h \in W_h$
  \begin{equation}
    \del[1]{P^{f_n}_W v - P_W v, f_n w_h}_{\mathcal{K}} = \del[1]{v - P_W v, f_n w_h}_{\mathcal{K}}.
  \end{equation}
  Let $w_h = P^{f_n}_W v - P_W v$, then,
  \begin{equation}
    \begin{split}
      \norm[1]{P^{f_n}_W v - P_W v}^2_{f_n, \mathcal{K}} & = \del[1]{v - P_W v, f_n \del[0]{P^{f_n}_W v - P_W v}}_{\mathcal{K}}
      \\
      & \leq \norm[1]{v - P_W v}_{f_n, \mathcal{K}} \norm[1]{P^{f_n}_W v - P_W v}_{f_n, \mathcal{K}}.
    \end{split}
  \end{equation}
  Thus,
  \begin{equation}
    \norm[1]{P^{f_n}_W v - P_W v}_{f_n, \mathcal{K}} \leq \norm[1]{v - P_W v}_{f_n, \mathcal{K}} \leq C \norm[1]{v - P_W v}_{\mathcal{K}},
  \end{equation}
  since $f_n$ is uniformly bounded. The result follows by the triangle
  inequality.
\end{proof}

In order to obtain an equivalence between the standard and the weighted $L^2$-projections on the boundary $\mathcal{Q}_{\mathcal{K}}$ of a space-time element $\mathcal{K}$, we require the following continuous trace inequality:
\begin{lemma}[Continuous trace inequality]
	\label{lem:cont_trace_ineq}
	Let $\mathcal{K} = K \times I_n$. For $\phi \in H^{(0,1)}(\mathcal{K})$, the following holds:
	\begin{equation}
		\norm[1]{\phi}^2_{\mathcal{Q}_{\mathcal{K}}} \leq C\del{\norm{\nabla \phi}_{\mathcal{K}} + h_K^{-1}\norm[0]{\phi}_{\mathcal{K}}}\norm[0]{\phi}_{\mathcal{K}}.
	\end{equation}
\end{lemma}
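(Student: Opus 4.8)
The plan is to reduce the three-dimensional trace estimate on the prism $\mathcal{K} = K \times I_n$ to the classical two-dimensional multiplicative trace inequality on the spatial triangle $K$, by slicing $\mathcal{K}$ along the temporal direction. The key observation is that, since the mesh does not move in time, the lateral boundary factorizes as $\mathcal{Q}_{\mathcal{K}} = \partial K \times I_n$, so that
\[
  \norm{\phi}^2_{\mathcal{Q}_{\mathcal{K}}}
  = \int_{I_n} \norm[0]{\phi(\cdot,t)}^2_{\partial K}\dif t,
\]
and likewise $\norm{\phi}^2_{\mathcal{K}} = \int_{I_n}\norm[0]{\phi(\cdot,t)}_K^2\dif t$ and $\norm{\nabla\phi}_{\mathcal{K}}^2 = \int_{I_n}\norm[0]{\nabla\phi(\cdot,t)}_K^2\dif t$, where $\nabla$ is the spatial gradient. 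Because $\phi \in H^{(0,1)}(\mathcal{K})$ requires only spatial $H^1$-regularity, for almost every $t \in I_n$ the slice $\phi(\cdot,t)$ lies in $H^1(K)$, so a pointwise-in-time spatial estimate is available.

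The first step is to establish the spatial multiplicative trace inequality: for shape-regular $K$ there is a constant $C$, depending only on $c_r$, such that
\[
  \norm{\psi}^2_{\partial K}
  \leq C\del{\norm{\nabla\psi}_K + h_K^{-1}\norm{\psi}_K}\norm{\psi}_K,
  \qquad \forall\, \psi \in H^1(K).
\]
I would prove this by the standard divergence-theorem argument: taking $\boldsymbol{x}_0$ to be the incenter of $K$ and integrating $\nabla\cdot\del{\psi^2 (\boldsymbol{x} - \boldsymbol{x}_0)}$ over $K$, one uses that $(\boldsymbol{x}-\boldsymbol{x}_0)\cdot\boldsymbol{n} = \rho_K$ on each edge, that $\abs{\boldsymbol{x} - \boldsymbol{x}_0}\leq C h_K$, the Cauchy--Schwarz inequality, and the shape-regularity bound $\rho_K \geq h_K/c_r$ from \cref{eq:shape_reg}; alternatively one may simply cite a standard reference. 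The constant is independent of $h_K$ precisely because the $h_K$-scaling is exhibited explicitly.

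The second step applies this slice-wise with $\psi = \phi(\cdot,t)$ and integrates over $I_n$:
\[
  \norm{\phi}^2_{\mathcal{Q}_{\mathcal{K}}}
  \leq C \int_{I_n}\del{\norm[0]{\nabla\phi(\cdot,t)}_K + h_K^{-1}\norm[0]{\phi(\cdot,t)}_K}\norm[0]{\phi(\cdot,t)}_K\dif t.
\]
Applying the Cauchy--Schwarz inequality in the time integral, then the elementary bound $(a+b)^2 \leq 2(a^2+b^2)$ on the first factor, gives
\[
  \norm{\phi}^2_{\mathcal{Q}_{\mathcal{K}}}
  \leq C\del{\norm{\nabla\phi}^2_{\mathcal{K}} + h_K^{-2}\norm{\phi}^2_{\mathcal{K}}}^{1/2}\norm{\phi}_{\mathcal{K}},
\]
and the inequality $(a^2+b^2)^{1/2}\leq a+b$ for $a,b\geq 0$ yields the claimed estimate.

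The routine parts are the time-slicing and the two applications of Cauchy--Schwarz; the only substantive ingredient is the spatial multiplicative trace inequality with explicit $h_K$-dependence, so the main point to get right is that its constant depends only on the shape-regularity constant $c_r$ and not on $h_K$ or $\Delta t$. Since the temporal direction enters solely through an outer integral that never couples to the spatial scaling, the anisotropic scaling identities of \cref{ss:scaling_identities} are not needed here.
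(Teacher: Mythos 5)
Your proposal is correct and is essentially the paper's intended argument: the paper's proof is only a pointer to the multiplicative trace inequality of \cite[Lemma 1.49]{Pietro:book}, and your realization of the ``analogous'' adaptation---factorizing $\mathcal{Q}_{\mathcal{K}} = \partial K \times I_n$, applying the two-dimensional divergence-theorem trace inequality on each time slice with the constant controlled by the shape-regularity bound \cref{eq:shape_reg}, and then using Cauchy--Schwarz in time---is exactly how that lemma transfers to the prism, since only the spatial gradient appears and the temporal direction enters through an outer integral. No gap; your emphasis that the constant depends only on $c_r$ and not on $h_K$ or $\Delta t$ is the right point to check.
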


\begin{proof}
  The proof is analogous to the proof of \cite[Lemma
  1.49]{Pietro:book}.
\end{proof}

We next find $L^2$ projection esimates of the different projection
operators.

\begin{theorem}[$L^2$ orthogonal projection estimates]
  \label{thm:L2proj_bounds}
  Let $\mathcal{K} = K \times I_n$ and $\mathcal{F}$ be a face on the
  free-surface boundary, i.e.,
  $\mathcal{F} \in \mathcal{F}_{\mathcal{S}}^n$. Let
  $\partial \mathcal{F}_0$ denote the two edges of the face
  $\mathcal{F}$ that are on the time levels. Assume that the spatial
  shape-regularity condition \cref{eq:shape_reg} holds and that the
  triangulation $\mathcal{T}^n $does not have any hanging
  nodes. Suppose that $(\boldsymbol{q}, v)$ are such that
  $\boldsymbol{q} |_{\mathcal{K}} \in \sbr[1]{H^{(s_t,
      s_s)}(\mathcal{K})}^2$,
  $v|_{\mathcal{K}} \in H^{(s_t,s_s)}(\mathcal{K})$, where
  $1/2 < s_t \leq p+1$ and $1 \leq s_s \leq p+1$. Then we have the following
  estimates:
  \begin{subequations}
    \begin{equation}
      \label{eq:L2_v_K}
      \norm[0]{v - P_W v}_{\mathcal{K}}
      \leq C\del[1]{h_K^{s_s} + \Delta t^{s_t}}  \norm{v}_{H^{(s_t, s_s)}(\mathcal{K})},
    \end{equation}
    \begin{equation}
      \label{eq:L2_v_QK}
      \norm[0]{v - P_W v}_{\mathcal{Q}_{\mathcal{K}}}
      \leq C \del[0]{h_K^{s_s-1/2} + h_K^{-1/2}\Delta t^{s_t}}  \norm{v}_{H^{(s_t, s_s)}(\mathcal{K})},
    \end{equation}
    \begin{equation}
      \label{eq:L2_q_K}
      \norm[0]{\boldsymbol{q} - \boldsymbol{P_V q}}_{\mathcal{K}}
      \leq C \del[0]{h_K^{s_s} + \Delta t^{s_t}} \norm{\boldsymbol{q}}_{H^{(s_t, s_s)}(\mathcal{K})},
    \end{equation}
    \begin{equation}
      \label{eq:L2_q_QK}
      \norm[0]{\del[0]{\boldsymbol{q} - \boldsymbol{P_V q}} \cdot \boldsymbol{n}}_{\mathcal{Q}_{\mathcal{K}}}
      \leq C \del[0]{h_K^{s_s-1/2} + h_K^{-1/2}\Delta t^{s_t}} \norm{\boldsymbol{q}}_{H^{(s_t, s_s)}(\mathcal{K})},
    \end{equation}
    \begin{equation}
      \label{eq:L2_w_K0}
      \norm[0]{v - P_W v}_{\partial \mathcal{K}_0}
      \leq C \del[0]{\Delta t^{-1/2} h_K^{s_s} + \Delta t^{s_t-1/2}} \norm{v}_{H^{(s_t,s_s)}(\mathcal{K})},
    \end{equation}
    \begin{equation}
      \label{eq:L2_q_K0}
      \norm[0]{\boldsymbol{q} - \boldsymbol{P_V q}}_{\partial \mathcal{K}_0}
      \leq C \del[0]{\Delta t^{-1/2} h_K^{s_s} + \Delta t^{s_t-1/2}} \norm{\boldsymbol{q}}_{H^{(s_t,s_s)}(\mathcal{K})},
    \end{equation}
    \begin{equation}
      \label{eq:L2_M}
      \norm[0]{v - P_M v}_{\mathcal{F}}
      \leq C \del[0]{h_K^{s_s} + \Delta t^{s_t}}  \norm{v}_{H^{(s_t, s_s)}(\mathcal{F})},
    \end{equation}
    \begin{equation}
      \label{eq:L2_M_F0}
      \norm[0]{v - P_M v}_{\partial \mathcal{F}_0}
      \leq C \del[0]{\Delta t^{-1/2}h_K^{s_s} + \Delta t^{s_t-1/2}} \norm{v}_{H^{(s_t, s_s)}(\mathcal{F})}.
    \end{equation}
  \end{subequations}
\end{theorem}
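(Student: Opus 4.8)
The plan is to estimate all seven quantities by reducing, via the two-stage mappings $F_{\mathcal{K}}$ and $G_{\mathcal{K}}$ of \cref{ss:space_time_mappings}, to anisotropic polynomial approximation on the reference prism, and then to restore the physical scales $h_K$ and $\Delta t$ \emph{separately} using the scaling identities of \cref{lem:identities_Khat_to_checkK,lem:identities_checkK_to_K} together with the standard spatial scaling arguments \cref{eq:standard_scaling_arguments}. The structural fact that makes the $h_K/\Delta t$ separation possible is that the local spaces are tensor products, $W_h(\mathcal{K}) = P_p(K)\otimes P_p(I_n)$ and $\boldsymbol{V}_h(\mathcal{K}) = [P_p(K)\otimes P_p(I_n)]^2$, so that the $L^2$-projection factors as $P_W = P_W^s P_W^t = P_W^t P_W^s$, where $P_W^s$ and $P_W^t$ denote the $L^2$-projections acting only on the spatial and only on the temporal variables, respectively. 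Since $P_W^s$ commutes with $\partial_t$ and $P_W^t$ commutes with the spatial gradient, and since each partial projection is an $L^2(\mathcal{K})$-contraction, I would split the error as $v - P_W v = (I - P_W^s)v + P_W^s(I - P_W^t)v$ and treat the two terms with purely spatial and purely temporal approximation theory.

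For the element bounds \cref{eq:L2_v_K} and \cref{eq:L2_q_K}, I would estimate $\norm{(I-P_W^s)v}_{\mathcal{K}}$ by applying the spatial $L^2$-projection estimate (valid under the shape-regularity \cref{eq:shape_reg}) for each fixed time and integrating over $I_n$, giving $C h_K^{s_s}\abs{v}_{H^{(0,s_s)}(\mathcal{K})}$; and $\norm{P_W^s(I-P_W^t)v}_{\mathcal{K}} \leq \norm{(I-P_W^t)v}_{\mathcal{K}}$ by the one-dimensional temporal estimate applied for each fixed point in space, giving $C\Delta t^{s_t}\abs{v}_{H^{(s_t,0)}(\mathcal{K})}$. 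Summing yields the stated bound, where the temporal estimate for $1/2 < s_t \leq p+1$ is furnished by a fractional Bramble--Hilbert argument in one dimension; the vector estimate follows componentwise.

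For the lateral-face bounds \cref{eq:L2_v_QK} and \cref{eq:L2_q_QK}, each face in $\mathcal{Q}_{\mathcal{K}}$ is the product of a spatial edge with $I_n$, so I would apply the continuous trace inequality \cref{lem:cont_trace_ineq} to $\phi = v - P_W v$, reducing matters to $\norm{v - P_W v}_{\mathcal{K}}$ (already bounded) and the spatial seminorm $\abs{v - P_W v}_{H^{(0,1)}(\mathcal{K})}$. The latter I would again control by the tensor splitting: the spatial $H^1$-seminorm of $(I-P_W^s)v$ is of order $h_K^{s_s-1}$, while for $P_W^s(I-P_W^t)v$ a spatial inverse inequality on the degree-$p$ polynomial contributes a factor $h_K^{-1}$ times $\norm{(I-P_W^t)v}_{\mathcal{K}}$ of order $\Delta t^{s_t}$; the $h_K^{1/2}$ prefactor carried by the trace inequality then produces $C(h_K^{s_s-1/2} + h_K^{-1/2}\Delta t^{s_t})$. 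The time-level bounds \cref{eq:L2_w_K0}, \cref{eq:L2_q_K0} and the edge bound \cref{eq:L2_M_F0} are the mirror image in time: since $\partial\mathcal{K}_0$ consists of copies of $K$ at the two endpoints of $I_n$, I would use a temporal trace inequality (the one-dimensional analogue of \cref{lem:cont_trace_ineq}, equivalently \cref{eq:scaling_K0} combined with a one-dimensional trace), which carries the factor $\Delta t^{-1/2}$ in place of $h_K^{-1/2}$, and combine it with the element estimate and the endpoint temporal estimate $\abs{(I-P_W^t)v(t_n)} \leq C\Delta t^{s_t-1/2}\abs{v}_{H^{s_t}(I_n)}$. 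Finally, \cref{eq:L2_M} has the same form as the element estimate, treating the free-surface face $\mathcal{F}=e\times I_n$ as a space-time rectangle with $h_e \simeq h_K$ and applying the identical tensor-product argument to $P_M$ on $Q_p(\mathcal{F})$.

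The main obstacle I anticipate is the bookkeeping for the time-level faces, where one must combine a spatial approximation estimate with a temporal trace (yielding the $\Delta t^{-1/2}h_K^{s_s}$ term) and a temporal approximation estimate with a temporal \emph{endpoint} trace (yielding the $\Delta t^{s_t-1/2}$ term); the latter requires the fractional regularity $s_t > 1/2$ so that point evaluation in time is admissible, which is precisely the role of the lower bound on $s_t$ in the hypotheses. The rest is routine once the tensor-product decomposition and the two distinct trace inequalities, spatial at scale $h_K$ and temporal at scale $\Delta t$, are in place.
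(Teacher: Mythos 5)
Your overall architecture coincides with the paper's: the paper also factors the projection through the tensor-product structure as $P_W = \pi^t\circ\pi^s$, splits $v - P_W v$ into a purely spatial and a purely temporal error using $L^2$-contractivity of the partial projections (\cref{eq:norm_split}), and restores $h_K$ and $\Delta t$ separately. Within that frame you make two genuinely different choices. For \cref{eq:L2_v_QK} the paper estimates the two pieces on $\mathcal{Q}_{\mathcal{K}}$ separately --- the spatial piece by a trace-version polynomial approximation estimate \cite[Lemma 1.59]{Pietro:book}, the temporal piece by applying \cref{lem:cont_trace_ineq} to the derivatives $\partial_{x_0}^{\alpha_t}v$ --- whereas you apply \cref{lem:cont_trace_ineq} once to the whole error $\phi = v - P_W v$ and control $\norm{\nabla \phi}_{\mathcal{K}}$ by spatial $H^1$-approximation of $(I-P_W^s)v$ plus a spatial inverse inequality on the polynomial part $P_W^s(I-P_W^t)v$; expanding $\del[1]{h_K^{s_s-1}+h_K^{-1}\Delta t^{s_t}}\del[1]{h_K^{s_s}+\Delta t^{s_t}}$ confirms your route reproduces the stated bound, and it has the merit of using only tools proved in the paper. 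Similarly, for \cref{eq:L2_M} and \cref{eq:L2_M_F0} you give a direct tensor-product argument on the rectangle $e\times I_n$, where the paper simply cites \cite[Lemma B.14]{Sudirham:2005}.

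The one step that does not go through as written is the time-level bound \cref{eq:L2_w_K0} in the low-regularity regime $1/2 < s_t < 1$, which the hypotheses explicitly allow. Your ``one-dimensional analogue of \cref{lem:cont_trace_ineq}'' is an $H^1$-in-time trace inequality; applying it to the spatial-error term $(I-P_W^s)v$ requires $\partial_t v \in L^2(\mathcal{K})$, which is unavailable when $s_t < 1$. You correctly note that point evaluation in time needs $s_t > 1/2$, but you attach this caveat only to the temporal-error endpoint estimate $\abs[0]{(I-P_W^t)v(t_n)} \leq C\Delta t^{s_t-1/2}\abs{v}_{H^{s_t}(I_n)}$ (the scaled form of \cite[Lemma 3.5]{Houston:2002}, which is also what the paper uses). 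The same fractional machinery must replace the $H^1$ trace inequality for the spatial term: the paper handles it by combining the spatial approximation estimate on $\partial\mathcal{K}_0$ with the Sobolev embedding theorem, the definition of fractional Sobolev norms, and a scaling argument, yielding $\norm[0]{v}_{H^{s_s}(\partial \mathcal{K}_0)} \leq C\Delta t^{-1/2}\norm{v}_{H^{(s_t,s_s)}(\mathcal{K})}$ for $s_t > 1/2$, hence the $\Delta t^{-1/2}h_K^{s_s}$ term. This is a repair rather than a change of strategy --- the ingredient is the fractional embedding you already invoke elsewhere --- but as stated your argument for \cref{eq:L2_w_K0}, \cref{eq:L2_q_K0}, and \cref{eq:L2_M_F0} only covers $s_t \geq 1$.
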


\begin{proof}
  First note that \cref{eq:L2_q_K} and \cref{eq:L2_q_K0} result from
  applying \cref{eq:L2_v_K} and \cref{eq:L2_w_K0}, respectively, on
  each component of $\boldsymbol{q}$, so the proof for those estimates
  is not shown. Since the face $\mathcal{F}$ is a quadrilateral, the
  proof for \cref{eq:L2_M} and \cref{eq:L2_M_F0} can be found in
  \cite[Lemma B.14]{Sudirham:2005}.
	
  Let $\pi^t$ and $\pi^s$ denote the orthogonal $L^2$-projections onto $L^2(K) \otimes P_p(I_n)$ and onto $P_p(K) \otimes L^2(I_n)$, respectively. We can define $P_W$ as
  $P_W := \pi^t \circ \pi^s$.

  We first show \cref{eq:L2_v_K}. Notice that
  \begin{multline}
    \norm{v - P_W v}^2_{\mathcal{K}} = \norm{v - \pi^t \circ \pi^s v}^2_{\mathcal{K}} 
    = \norm{v - \pi^t v + \pi^t(v - \pi^s v)}^2_{\mathcal{K}}
    \\
    \leq C\del[1]{\,\norm{v - \pi^t v}^2_{\mathcal{K}} + \norm{\pi^t(v - \pi^s v)}^2_{\mathcal{K}}}.    
  \end{multline}
  Since $\pi^t$ is bounded and $\norm{\pi^t} = 1$, then
  \begin{equation}
    \label{eq:norm_split}
    \norm{v - P_W v}^2_{\mathcal{K}} \leq C\del[1]{\, \norm{v - \pi^t v}^2_{\mathcal{K}} + \norm{v - \pi^s v}^2_{\mathcal{K}}}.
  \end{equation}
  Let us treat each term separately. For the second term on the right
  hand side of \cref{eq:norm_split}, by \cite[Lemma
  1.58]{Pietro:book}, since we assume \cref{eq:shape_reg} and no
  hanging nodes,
  \begin{multline}
    \label{eq:spat_K}
    \norm{v - \pi^s v}^2_{\mathcal{K}} = \int_{t_n}^{t_{n+1}} \int_{K}(v - \pi^s v)^2 \dif \boldsymbol{x} \dif x_0
    \leq C h_K^{2s_s}\int_{t_n}^{t_{n+1}} \abs{v}^2_{H^{s_s}(K)} \dif x_0
    \\
    \le Ch_K^{2s_s} \norm{v}^2_{H^{(0,s_s)}(\mathcal{K})},    
  \end{multline}  
  where $0 \leq s_s \leq p+1$. Similarly, for the temporal projection
  we have
  \begin{equation}
    \label{eq:temp_K}
    \norm{v - \pi^t v}^2_{\mathcal{K}} = \int_{K} \int_{t_n}^{t_{n+1}} (v - \pi^t v)^2 \dif x_0 \dif \boldsymbol{x}
    \leq C \Delta t^{2s_t} \norm{v}^2_{H^{(s_t,0)}(\mathcal{K})},
  \end{equation}
  where $0 \leq s_t \leq p+1$. \Cref{eq:L2_v_K} follows by combining
  \cref{eq:spat_K} and \cref{eq:temp_K}.
  
  Next, we show \cref{eq:L2_v_QK}. Similarly as above, we have
  \begin{equation}
    \label{eq:norm_splitQK}
    \norm{v - P_W v}^2_{\mathcal{Q}_{\mathcal{K}}} \leq
    C\del[1]{\, \norm{v - \pi^t v}^2_{\mathcal{Q}_{\mathcal{K}}} + \norm{v - \pi^s v}^2_{\mathcal{Q}_{\mathcal{K}}}}.	
  \end{equation}
  For the spatial projection, using \cite[Lemma 1.59]{Pietro:book}, we
  have
  \begin{multline}
    \label{eq:spat_QK}
    \norm{v - \pi^s v}^2_{\mathcal{Q}_{\mathcal{K}}} = \int_{t_n}^{t_{n+1}} \int_{\partial K}(v - \pi^s v)^2 \dif \boldsymbol{x} \dif x_0
    \\
    \leq C h_K^{2s_s-1}\int_{t_n}^{t_{n+1}} \abs{v}^2_{H^{s_s}(K)} \dif x_0
    \le Ch_K^{2s_s-1} \norm{v}^2_{H^{(0,s_s)}(\mathcal{K})}.    
  \end{multline}
  For the temporal projection, similarly as above,
  \begin{equation}
    \label{eq:temp_QK}
    \norm{v - \pi^t v}^2_{\mathcal{Q}_{\mathcal{K}}} = \int_{\partial K} \int_{t_n}^{t_{n+1}} (v - \pi^t v)^2 \dif x_0 \dif \boldsymbol{x}
    \leq C \Delta t^{2s_t} \norm{v}^2_{H^{(s_t,0)}(\mathcal{Q}_\mathcal{K})}.
  \end{equation}
  Combining \cref{eq:spat_QK} and \cref{eq:temp_QK}, we obtain
  \begin{equation}
  	\norm{v - P_W v}^2_{\mathcal{Q}_{\mathcal{K}}} \leq Ch_K^{2s_s-1} \norm{v}^2_{H^{(0,s_s)}(\mathcal{K})} + C \Delta t^{2s_t} \norm{v}^2_{H^{(s_t,0)}(\mathcal{Q}_\mathcal{K})}.
  \end{equation}
  Note that, by \Cref{lem:cont_trace_ineq}, since $v \in H^{(s_t,s_s)}(\mathcal{K})$ with $s_s \geq 1$, we have
  \begin{equation}
  	\norm{\partial_{x_0}^{\alpha_t} v}_{\mathcal{Q}_{\mathcal{K}}}^2 \le C \norm{\nabla \partial_{x_0}^{\alpha_t} v}_{\mathcal{K}} \norm{\partial_{x_0}^{\alpha_t} v}_{\mathcal{K}} + C h_K^{-1}\norm{ \partial_{x_0}^{\alpha_t} v}_{\mathcal{K}}^2,
  \end{equation}
  for all $0 \leq \alpha_t \leq s_t$. Thus,
  \begin{equation}
  	\norm{v}^2_{H^{(s_t,0)}(\mathcal{Q}_\mathcal{K})} \leq C \norm{v}^2_{H^{(s_t,s_s)}(\mathcal{K})} + C h_K^{-1}\norm{v}^2_{H^{(s_t,s_s)}(\mathcal{K})},
  \end{equation}
  and the result follows.
  
  To show \cref{eq:L2_q_QK}, we note the following
  \begin{equation}
    \norm[0]{\del[0]{\boldsymbol{q} - \boldsymbol{P_V q}} \cdot \boldsymbol{n}}_{\mathcal{Q}_{\mathcal{K}}} \leq \norm[0]{\boldsymbol{q} - \boldsymbol{P_V q}}_{\mathcal{Q}_{\mathcal{K}}}.
  \end{equation}
  The result follows by applying \cref{eq:L2_v_QK} component wise.
  
  Finally, we show \cref{eq:L2_w_K0}. For the spatial component of the
  projection, notice that
  \begin{equation}
    \begin{aligned}
      \norm{v - \pi^s v}^2_{\partial \mathcal{K}_0} & = \norm{v - \pi^s v}^2_{K_j^{n+1}} + \norm{v - \pi^s v}^2_{K_j^{n}}
      \\
      & \leq Ch_K^{2s_s}\del[1]{\abs{v}^2_{H^{s_s}(K_j^{n+1})} + \abs{v}^2_{H^{s_s}(K_j^{n})}}
      \\
      & \leq Ch_K^{2s_s} \norm[0]{v}^2_{H^{s_s}(\partial \mathcal{K}_0)}.
    \end{aligned}
  \end{equation}
  By the Sobolev embedding theorem, the definition of fractional
  Sobolev norms \cite[Chapter 14]{BrennerScott:book}, and a standard
  scaling argument, we have for $s_t > 1/2$,
  \begin{equation}
  \norm[0]{v}_{H^{s_s}(\partial \mathcal{K}_0)} \le C\Delta t^{-1/2}\norm{v}_{H^{(s_t,s_s)}(\mathcal{K})}.
  \end{equation}
  Thus,
  \begin{equation}
  \norm{v - \pi^s v}^2_{\partial \mathcal{K}_0} \leq C\Delta t^{-1}h_K^{2s_s} \norm{v}^2_{H^{(s_t,s_s)}(\mathcal{K})}.
  \end{equation}
  
%  Note that
%  %
%  \begin{equation}
%	  \begin{aligned}
%		  \norm[0]{v}_{H^{s_s}(\partial \mathcal{K}_0)} & = \norm[0]{v(t_n)}_{H^{s_s}(K)} + \norm[0]{v(t_{n+1})}_{H^{s_s}(K)}
%		  \\
%		  & \leq \norm[0]{v}_{L^{\infty}(I_n,H^{s_s}(K))}.
%	  \end{aligned}
%  \end{equation}
%  Moreover,
%  %
%  \begin{equation}
%	  \norm[0]{v}_{L^{\infty}(I_n)} \leq \sqrt{2}\max\{\Delta t^{-1/2}, \Delta t^{1/2}\} \norm[0]{v}_{H^1(I_n)}.
%  \end{equation}
%  %
%  Thus,
%  %
%  \begin{equation}
%	  \norm{v - \pi^s v}^2_{\partial \mathcal{K}_0} \leq C\Delta t^{-1}h_K^{2s_s} \norm{v}^2_{H^{(s_t,s_s)}(\mathcal{K})}.
%  \end{equation}
%  %
  
  For the temporal projection, we have
  \begin{equation}
    \begin{aligned}
      \norm{v - \pi^t v}^2_{\partial \mathcal{K}_0} & = \norm{v - \pi^t v}^2_{K_j^{n+1}} + \norm{v - \pi^t v}^2_{K_j^{n}}
      \\
      & = \int_{K}\del[1]{(v - \pi^t v)(t_{n+1},\boldsymbol{x})}^2 \dif \boldsymbol{x} + \int_{K}\del[1]{(v - \pi^t v)(t_{n},\boldsymbol{x})}^2 \dif \boldsymbol{x} && \text{(by def.)}
      \\
      & \leq h_K^2 \int_{\widehat{K}}\del[1]{(v - \pi^t v)(t_{n+1},\widehat{\boldsymbol{x}})}^2 \dif \widehat{\boldsymbol{x}} + h_K^2 \int_{\widehat{K}}\del[1]{(v - \pi^t v)(t_{n},\widehat{\boldsymbol{x}})}^2 \dif \widehat{\boldsymbol{x}} && \text{(by \cref{eq:scaling_spatialK})}
      \\
      & = h_K^2 \norm{\check{v} - \pi^t \check{v}}^2_{\partial \check{\mathcal{K}}_0} && \text{(by def.)}
      \\
      & = h_K^2 \norm{\widehat{v} - \widehat{\pi}^t \widehat{v}}^2_{\partial \widehat{\mathcal{K}}_0} && \text{(by \cref{eq:scaling_K0})}
      \\
      & = h_K^2 \int_{\widehat{K}}\del[1]{(\widehat{v} - \widehat{\pi}^t \widehat{v})(1,\widehat{\boldsymbol{x}})}^2 \dif \widehat{\boldsymbol{x}} + h_K^2 \int_{\widehat{K}}\del[1]{(\widehat{v} - \widehat{\pi}^t \widehat{v})(-1,\widehat{\boldsymbol{x}})}^2 \dif \widehat{\boldsymbol{x}} && \text{(by def.)}
      \\
      & \leq C h_K^2 \norm[1]{\partial_{\widehat{x}_0}^{s_t} \widehat{v}}^2_{\widehat{\mathcal{K}}} && \text{(by \cite{Houston:2002})}
      \\
      & = C h_K^2 \del{\frac{\Delta t}{2}}^{2s_t - 1} \norm[1]{\partial_{\check{x}_0}^{s_t} \check{v}}^2_{\check{\mathcal{K}}} && \text{(by \cref{eq:scaling_element})}
      \\
      & \leq C h_K^2 \del{\frac{\Delta t}{2}}^{2s_t - 1} h_K^{-2} \norm[1]{\partial_{x_0}^{s_t} v}^2_{\mathcal{K}} && \text{(by \cref{eq:scaling_spatialK})}
      \\
      & \leq C \Delta t^{2s_t - 1}\norm{v}^2_{H^{(s_t,0)}(\mathcal{K})}.
    \end{aligned}
  \end{equation}
  Note that here we have used that
  $\abs{\widehat{v} - \widehat{\pi}^t \widehat{v}(\pm 1)} \leq C
  \norm[0]{\partial_{\widehat{x}_0} ^{s_t}\widehat{v}}_{\widehat{I}}$
  which was shown in \cite[Lemma 3.5]{Houston:2002}. This concludes
  the proof.
\end{proof}

\begin{corollary}
  \label{cor:weightedL2proj_bounds}
  Suppose that $f_n$ is uniformly bounded. Then, under the same
  assumptions as in \Cref{thm:L2proj_bounds}, the following estimates
  are satisfied:
  \begin{subequations}
    \begin{equation}
      \label{eq:fnL2_v_K}
      \norm[0]{v - P^{f_n}_W v}_{f_n,\mathcal{K}}
      \leq C \del[0]{h_K^{s_s} + \Delta t^{s_t}} \norm{v}_{H^{(s_t, s_s)}(\mathcal{K})},
    \end{equation}
    \begin{equation}
      \label{eq:fnL2_v_QK}
      \norm[0]{v - P^{f_n}_W v}_{f_n,\mathcal{Q}_{\mathcal{K}}}
      \leq C \del[0]{h_K^{s_s-1/2} + h_K^{-1/2}\Delta t^{s_t}}  \norm{v}_{H^{(s_t, s_s)}(\mathcal{K})},
    \end{equation}
    \begin{equation}
      \label{eq:fnL2_q_K}
      \norm[0]{\boldsymbol{q} - \boldsymbol{P_V}^{f_n}\boldsymbol{q}}_{f_n,\mathcal{K}}
      \leq C \del[0]{h_K^{s_s} + \Delta t^{s_t}} \norm{\boldsymbol{q}}_{H^{(s_t, s_s)}(\mathcal{K})},
    \end{equation}
    \begin{equation}
      \label{eq:fnL2_q_QK}
      \norm[0]{\del[0]{\boldsymbol{q} - \boldsymbol{P_V}^{f_n}\boldsymbol{q}} \cdot \boldsymbol{n}}_{f_n,\mathcal{Q}_{\mathcal{K}}}
      \leq C \del[0]{h_K^{s_s-1/2} + h_K^{-1/2}\Delta t^{s_t}} \norm{\boldsymbol{q}}_{H^{(s_t, s_s)}(\mathcal{K})},
    \end{equation}
    \begin{equation}
      \label{eq:fnL2_w_K0}
      \norm[0]{v - P^{f_n}_W v}_{f_n, \partial \mathcal{K}_0}
      \leq C \del[0]{\Delta t^{-1/2}h_K^{s_s} + \Delta t^{s_t-1/2}} \norm{v}_{H^{(s_t,s_s)}(\mathcal{K})},
    \end{equation}
    \begin{equation}
      \label{eq:fnL2_q_K0}
      \norm[0]{\boldsymbol{q} - \boldsymbol{P_V}^{f_n} \boldsymbol{q}}_{f_n, \partial \mathcal{K}_0}
      \leq C \del[0]{\Delta t^{-1/2}h_K^{s_s} + \Delta t^{s_t-1/2}} \norm{\boldsymbol{q}}_{H^{(s_t,s_s)}(\mathcal{K})},
    \end{equation}
    \begin{equation}
      \label{eq:fnL2_M}
      \norm[0]{v - P_M^{f_n} v}_{f_n, \mathcal{F}}
      \leq C \del[0]{h_K^{s_s} + \Delta t^{s_t}} \norm{v}_{H^{(s_t, s_s)}(\mathcal{F})},
    \end{equation}
    \begin{equation}
      \label{eq:fnL2_M_F0}
      \norm[0]{v - P_M^{f_n} v}_{f_n, \partial \mathcal{F}_0}
      \leq C \del[0]{\Delta t^{-1/2}h_K^{s_s} + \Delta t^{s_t-1/2}} \norm{v}_{H^{(s_t, s_s)}(\mathcal{F})}.
    \end{equation}
  \end{subequations}
\end{corollary}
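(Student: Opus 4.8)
The plan is to reduce every weighted estimate to its unweighted counterpart in \Cref{thm:L2proj_bounds} using the projection equivalences of \Cref{lem:equiv_proj}. The three estimates posed on the full element or on an interior face, namely \eqref{eq:fnL2_v_K}, \eqref{eq:fnL2_q_K}, and \eqref{eq:fnL2_M}, are immediate: I would apply \eqref{eq:equivW}, \eqref{eq:equivV}, and \eqref{eq:equivM}, respectively, and then insert the corresponding bounds \eqref{eq:L2_v_K}, \eqref{eq:L2_q_K}, and \eqref{eq:L2_M}. No new work is needed here, since the weighted and unweighted projection errors differ only by a constant depending on the uniform bound of $f_n$.

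The remaining five estimates are posed on the boundary pieces $\mathcal{Q}_{\mathcal{K}}$, $\partial\mathcal{K}_0$, or $\partial\mathcal{F}_0$, where \Cref{lem:equiv_proj} does not apply directly because $P^{f_n}_W$, $\boldsymbol{P_V}^{f_n}$, and $P^{f_n}_M$ are defined through volume (or full-face) inner products, so their traces are not the weighted boundary projections. To handle these I would first use that $f_n$ is bounded above to replace each weighted boundary norm by the unweighted one, and then split by the triangle inequality, e.g.
\begin{equation*}
  \norm[0]{v - P^{f_n}_W v}_{\mathcal{Q}_{\mathcal{K}}}
  \leq \norm[0]{v - P_W v}_{\mathcal{Q}_{\mathcal{K}}}
  + \norm[0]{P_W v - P^{f_n}_W v}_{\mathcal{Q}_{\mathcal{K}}}.
\end{equation*}
The first term is controlled directly by the unweighted boundary estimate \eqref{eq:L2_v_QK}. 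For the second term, $P_W v - P^{f_n}_W v$ is a polynomial in $W_h(\mathcal{K})$, so I would apply the inverse trace inequality \eqref{eq:inv_trace_ineq_QK} to move it into the element, picking up a factor $h_K^{-1/2}$; on $\partial\mathcal{K}_0$ I would instead use \eqref{eq:inv_trace_ineq_K0} and on $\partial\mathcal{F}_0$ the face inequality \eqref{eq:inv_trace_ineq_F0}, each picking up a factor $\Delta t^{-1/2}$.

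It then remains to bound $\norm[0]{P_W v - P^{f_n}_W v}_{\mathcal{K}}$ (and its vector and face analogues) by the unweighted projection error. Here I would reuse the estimate established inside the proof of \Cref{lem:equiv_proj}, namely $\norm[0]{P^{f_n}_W v - P_W v}_{f_n,\mathcal{K}} \leq \norm[0]{v - P_W v}_{f_n,\mathcal{K}} \leq C\norm[0]{v - P_W v}_{\mathcal{K}}$; since $f_n(t) = e^{-\alpha(t-t_n)}$ is also bounded below by $e^{-\alpha\Delta t} > 0$ on $I_n$, the weighted and unweighted $\mathcal{K}$-norms are equivalent, so $\norm[0]{P_W v - P^{f_n}_W v}_{\mathcal{K}} \leq C\norm[0]{v - P_W v}_{\mathcal{K}}$, and \eqref{eq:L2_v_K} applies. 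Combining the two contributions reproduces exactly the rates in \eqref{eq:fnL2_v_QK}, \eqref{eq:fnL2_w_K0}, and \eqref{eq:fnL2_M_F0}, while the vector-valued versions \eqref{eq:fnL2_q_QK} and \eqref{eq:fnL2_q_K0} follow componentwise, just as \eqref{eq:L2_q_QK} and \eqref{eq:L2_q_K0} were obtained from their scalar analogues. The main obstacle is precisely this boundary reduction: matching the powers of $h_K$ and $\Delta t$ requires pairing the correct inverse trace inequality with the correct volume estimate, and one must check that the $h_K^{-1/2}$ or $\Delta t^{-1/2}$ gained from the inverse estimate combines with \eqref{eq:L2_v_K} (rate $h_K^{s_s} + \Delta t^{s_t}$) to yield the stated boundary rate rather than an inferior one.
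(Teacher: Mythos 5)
Your proposal is correct and takes essentially the same approach as the paper: the element and interior-face estimates follow from \Cref{lem:equiv_proj} together with \Cref{thm:L2proj_bounds}, and the boundary estimates from the triangle-inequality split $\norm[0]{v - P^{f_n}_W v} \leq \norm[0]{v - P_W v} + \norm[0]{P_W v - P^{f_n}_W v}$ combined with the inverse trace inequalities of \Cref{lem:inv_trace_ineq} and the volume estimates, with the vector cases handled componentwise. The only cosmetic difference is in bounding $\norm[0]{P_W v - P^{f_n}_W v}_{\mathcal{K}}$: you reuse the intermediate inequality from the proof of \Cref{lem:equiv_proj} together with the uniform lower bound $f_n \geq e^{-\alpha \Delta t}$, whereas the paper applies one further triangle inequality and invokes \cref{eq:fnL2_v_K}; both yield the same rates.
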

\begin{proof}
  \Cref{eq:fnL2_v_K}, \cref{eq:fnL2_q_K} and \cref{eq:fnL2_M} follow directly from \Cref{lem:equiv_proj} and \Cref{thm:L2proj_bounds}. Moreover, \cref{eq:fnL2_q_QK} and \cref{eq:fnL2_q_K0} follow from \cref{eq:fnL2_v_QK} and \cref{eq:fnL2_w_K0}, respectively. Therefore, we only show \cref{eq:fnL2_v_QK}, \cref{eq:fnL2_w_K0} and \cref{eq:fnL2_M_F0}.
  
  In order to prove \cref{eq:fnL2_v_QK}, notice that since $f_n$ is uniformly bounded and using the triangle inequality, we obtain
  \begin{equation}
  	\label{eq:vQK_P_minus_Pfn}
  	\norm[0]{v - P^{f_n}_W v}_{f_n,\mathcal{Q}_{\mathcal{K}}} \leq C \norm[0]{v - P_W v}_{\mathcal{Q}_{\mathcal{K}}} + C \norm[0]{P_W v - P^{f_n}_W v}_{\mathcal{Q}_{\mathcal{K}}}.
  \end{equation}
  By \cref{eq:inv_trace_ineq_QK}, we obtain
  \begin{equation}
  	\norm[0]{v - P^{f_n}_W v}_{f_n,\mathcal{Q}_{\mathcal{K}}} \leq C \norm[0]{v - P_W v}_{\mathcal{Q}_{\mathcal{K}}} + C h_K^{-1/2} \norm[0]{P_W v - P^{f_n}_W v}_{\mathcal{K}}.
  \end{equation}
  Using equivalence of norms in finite dimensional spaces, and by the
  triangle inequality, we have
  \begin{equation}
  	\label{eq:vQK_P_minus_Pfn2}
	  \norm[0]{v - P^{f_n}_W v}_{f_n,\mathcal{Q}_{\mathcal{K}}} \leq C \norm[0]{v - P_W v}_{\mathcal{Q}_{\mathcal{K}}} + C h_K^{-1/2} \norm[0]{P_W v - v}_{f_n,\mathcal{K}} + C h_K^{-1/2} \norm[0]{v - P^{f_n}_W v}_{f_n,\mathcal{K}}.
  \end{equation}
  \Cref{eq:fnL2_v_QK} follows by recalling that $f_n$ is uniformly
  bounded and using the estimates \cref{eq:L2_v_QK}, \cref{eq:L2_v_K}
  and \cref{eq:fnL2_v_K}. 
  
  In order to show \cref{eq:fnL2_w_K0}, since $f_n$ is uniformly
  bounded, we obtain by the triangle inequality
  \begin{equation}
	  \label{eq:P_minus_Pfn}
	  \norm[0]{v - P_W^{f_n} v}_{f_n,\partial \mathcal{K}_0} \leq C \norm[0]{v - P_W v}_{\partial \mathcal{K}_0} + C \norm[0]{P_W v - P_W^{f_n} v}_{\partial \mathcal{K}_0}.
  \end{equation}
  Using \cref{eq:inv_trace_ineq_K0} on the second term of the right
  hand side of \cref{eq:P_minus_Pfn},
  \begin{equation}
	  \label{eq:P_minus_Pfn2}
	 \norm[0]{v - P_W^{f_n} v}_{f_n,\partial \mathcal{K}_0} \leq C \norm[0]{v - P_W v}_{\partial \mathcal{K}_0} + C\Delta t^{-1/2}\norm[0]{P_W v - P_W^{f_n} v}_{\mathcal{K}}.
  \end{equation}
  By the triangle inequality,
  \begin{equation}
  	\label{eq:P_minus_Pfn3}
	  \norm[0]{v - P_W^{f_n} v}_{f_n,\partial \mathcal{K}_0} \leq C \norm[0]{v - P_W v}_{\partial \mathcal{K}_0} + C\Delta t^{-1/2}\norm[0]{P_W v - v}_{f_n,\mathcal{K}} + C\Delta t^{-1/2}\norm[0]{v - P_W^{f_n} v}_{f_n,\mathcal{K}}.
  \end{equation}
  Then, \cref{eq:fnL2_w_K0} is obtained by \cref{eq:L2_w_K0},
  \cref{eq:fnL2_v_K} and \cref{eq:L2_v_K}.
  
  Finally, we show \cref{eq:fnL2_M_F0}. Using that $f_n$ is uniformly
  bounded and by the triangle inequality,
  \begin{equation}
  	\norm[0]{v - P_M^{f_n} v}_{f_n,\partial \mathcal{F}_0} \leq C \norm[0]{v - P_M v}_{\partial \mathcal{F}_0} + C \norm[0]{P_M v - P_M^{f_n} v}_{\partial \mathcal{F}_0}.
  \end{equation}
  By \cref{eq:inv_trace_ineq_F0},
  \begin{equation}
  	\norm[0]{v - P_M^{f_n} v}_{f_n,\partial \mathcal{F}_0} \leq C \norm[0]{v - P_M v}_{\partial \mathcal{F}_0} + C \Delta t^{-1/2} \norm[0]{P_M v - P_M^{f_n} v}_{\mathcal{F}}.
  \end{equation}
  By the triangle inequality, we then obtain:
  \begin{equation}
  	\norm[0]{v - P_M^{f_n} v}_{f_n,\partial \mathcal{F}_0} \leq C \norm[0]{v - P_M v}_{\partial \mathcal{F}_0} + C \Delta t^{-1/2} \norm[0]{P_M v - v}_{f_n,\mathcal{F}} + C \Delta t^{-1/2} \norm[0]{v - P_M^{f_n} v}_{f_n,\mathcal{F}}.
  \end{equation}
  \Cref{eq:fnL2_M_F0} follows by the uniform boundedness of $f_n$ and
  the estimates \cref{eq:L2_M_F0}, \cref{eq:L2_M}, and
  \cref{eq:fnL2_M}.
\end{proof}

To conclude this subsection, we show the error estimates of our
projection $\Pi_h$.

\begin{lemma}
  \label{thm:Piproj_bounds}
  Assume that $\tau$ and $f_n$ are uniformly bounded. Under the
  same conditions as in \Cref{thm:L2proj_bounds}, the following
  estimates hold:
  \begin{subequations}
    \begin{equation}
      \label{eq:fnPi_q_K}
      \begin{split}
        \norm[0]{\boldsymbol{q} - \boldsymbol{\Pi_V q}}_{f_n,\mathcal{K}}
        & \leq C\del[0]{h_K^{s_s} + \Delta t^{s_t}} \norm{\boldsymbol{q}}_{H^{(s_t, s_s)}(\mathcal{K})} 
        \\
        & + C\del[0]{h_K^{s_s} + \Delta t^{s_t}}\norm{v}_{H^{(s_t, s_s)}(\mathcal{K})},
      \end{split}
    \end{equation}
    \begin{equation}
      \label{eq:fnPi_q_K0}
      \begin{split}
        \norm[0]{\boldsymbol{q} - \boldsymbol{\Pi_Vq}}_{f_n, \partial \mathcal{K}_0}
        & \leq C\del[0]{\Delta t^{-1/2}h_K^{s_s} + \Delta t^{s_t-1/2}}\norm{\boldsymbol{q}}_{H^{(s_t, s_s)}(\mathcal{K})}
        \\
        & + C\del[0]{h_K^{s_s}\Delta t^{-1/2} + \Delta t^{s_t-1/2}}\norm{v}_{H^{(s_t, s_s)}(\mathcal{K})}.
      \end{split}
    \end{equation}
  \end{subequations}
\end{lemma}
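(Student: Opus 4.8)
The plan is to obtain both estimates purely by combining \Cref{thm:proj_bounds} with the weighted $L^2$-projection bounds of \Cref{cor:weightedL2proj_bounds}; no new analytic ingredient is needed, so the only real task is bookkeeping of the powers of $h_K$ and $\Delta t$. First I would prove \cref{eq:fnPi_q_K}. I would start from the abstract bound \cref{eq:estimate_q_PIVq}, which controls $\norm[0]{\boldsymbol{q} - \boldsymbol{\Pi_V q}}_{f_n,\mathcal{K}}$ by the three weighted-projection quantities $\norm[0]{\boldsymbol{q} - \boldsymbol{P_V}^{f_n}\boldsymbol{q}}_{f_n,\mathcal{K}}$, $h_K^{1/2}\norm[0]{(\boldsymbol{q} - \boldsymbol{P_V}^{f_n}\boldsymbol{q}) \cdot \boldsymbol{n}}_{f_n, \mathcal{Q}_{\mathcal{K}}}$, and $C_{\tau}^{\max}h_K^{1/2}\norm[0]{v - P^{f_n}_W v}_{f_n, \mathcal{Q}_{\mathcal{K}}}$. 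Into these I substitute \cref{eq:fnL2_q_K}, \cref{eq:fnL2_q_QK}, and \cref{eq:fnL2_v_QK}, respectively.

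The key observation making the powers collapse to the stated clean form is that the factor $h_K^{1/2}$ in front of the two boundary terms exactly cancels the $h_K^{-1/2}$ appearing in the trace-type estimates: since $h_K^{1/2}\del[0]{h_K^{s_s-1/2} + h_K^{-1/2}\Delta t^{s_t}} = h_K^{s_s} + \Delta t^{s_t}$, each of the three contributions is bounded by $C\del[0]{h_K^{s_s} + \Delta t^{s_t}}$ times either $\norm{\boldsymbol{q}}_{H^{(s_t, s_s)}(\mathcal{K})}$ or $\norm{v}_{H^{(s_t, s_s)}(\mathcal{K})}$. Using that $\tau$, hence $C_{\tau}^{\max}$, is uniformly bounded, and collecting the $\boldsymbol{q}$ and $v$ terms separately, yields \cref{eq:fnPi_q_K} at once.

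Next I would prove \cref{eq:fnPi_q_K0} by starting from \cref{eq:estimate_q_PIVq_K0}, which bounds $\norm[0]{\boldsymbol{q} - \boldsymbol{\Pi_V q}}_{f_n,\partial \mathcal{K}_0}$ by $\norm[0]{\boldsymbol{q} - \boldsymbol{P_V}^{f_n}\boldsymbol{q}}_{f_n,\partial \mathcal{K}_0}$ plus $C\Delta t^{-1/2}$ times both $\norm[0]{\boldsymbol{q} - \boldsymbol{P_V}^{f_n}\boldsymbol{q}}_{f_n,\mathcal{K}}$ and $\norm[0]{\boldsymbol{q} - \boldsymbol{\Pi_V q}}_{f_n,\mathcal{K}}$. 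To the first term I apply \cref{eq:fnL2_q_K0}; to the second I apply \cref{eq:fnL2_q_K} (noting $\Delta t^{-1/2}\del[0]{h_K^{s_s}+\Delta t^{s_t}} = \Delta t^{-1/2}h_K^{s_s}+\Delta t^{s_t-1/2}$); and to the third term, crucially, I feed in the estimate \cref{eq:fnPi_q_K} just proved. It is precisely this last substitution that introduces the $\norm{v}_{H^{(s_t, s_s)}(\mathcal{K})}$ contribution into the $\partial\mathcal{K}_0$ bound, after multiplication by $\Delta t^{-1/2}$. Grouping the resulting powers gives \cref{eq:fnPi_q_K0}.

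I do not anticipate a genuine obstacle: the argument is a direct assembly of previously established inequalities, and the one point requiring a little care is simply tracking that the $\Delta t^{-1/2}$ factors on the time-level faces, together with the $h_K^{\pm 1/2}$ factors from the trace estimates, combine into the advertised exponents. The $v$-dependence in \cref{eq:fnPi_q_K0} is not present in the raw bound \cref{eq:estimate_q_PIVq_K0} and enters only through the recursive use of the element estimate \cref{eq:fnPi_q_K}, so I would make sure to apply \cref{eq:fnPi_q_K} rather than \cref{eq:estimate_q_PIVq} at that step.
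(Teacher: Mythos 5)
Your proposal is correct and takes essentially the same route as the paper: the paper's proof is exactly this substitution of the weighted $L^2$-projection bounds of \Cref{cor:weightedL2proj_bounds} into \Cref{thm:proj_bounds}, and your power bookkeeping ($h_K^{1/2}$ cancelling the $h_K^{-1/2}$ from the face estimates, $\Delta t^{-1/2}$ distributing onto $h_K^{s_s}+\Delta t^{s_t}$) is accurate. Your observation that the $v$-dependence in \cref{eq:fnPi_q_K0} enters only through feeding the already-proved \cref{eq:fnPi_q_K} into the term $C\Delta t^{-1/2}\norm[0]{\boldsymbol{q}-\boldsymbol{\Pi_V q}}_{f_n,\mathcal{K}}$ of \cref{eq:estimate_q_PIVq_K0} is precisely the step the paper leaves implicit.
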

\begin{proof}
  These estimates follow from substituting \cref{eq:fnL2_v_QK},
  \cref{eq:fnL2_q_K}, \cref{eq:fnL2_q_QK}, and \cref{eq:fnL2_q_K0} in
  \Cref{thm:proj_bounds}.
\end{proof}

%------------------------------------------------------------------------------
%\subsection{The error equations}
%\label{ss:errorequations}

%------------------------------------------------------------------------------
\subsection{The \emph{a priori} error estimates}
\label{ss:apriorierrorestimates}
In this section we present the main result of this paper, namely
\emph{a priori} error estimates for the space-time HDG method
\cref{eq:HDGProblem}.

In order to obtain \emph{a priori} error estimates, we first require
to obtain the error equations and a bound for the projection
errors. Define these projection errors as
\begin{equation}
  \label{eq:projerrors}
  \begin{split}
    \boldsymbol{\varepsilon^{q}}_h &= \boldsymbol{\Pi_V q} - \boldsymbol{q}_h,
    \quad
    \boldsymbol{\varepsilon^{q-}}_h = \boldsymbol{\Pi_{V^-} q} - \boldsymbol{q^-}_h,
    \quad
    \varepsilon_h^v = \Pi_W v - v_h,
    \\
    \varepsilon_h^{\lambda} &= P^{f_n}_M v - \lambda_h,
    \quad
    \varepsilon_h^{\lambda -} = P^{f_{n-1}}_{M^-} v - \lambda_h^-,    
  \end{split}
\end{equation}
where $\boldsymbol{\Pi_{V^-}}$ and $P^{f_{n-1}}_{M^-}$ denote the
projection onto the spaces $\boldsymbol{V}_h$ and $M_h$, respectively,
defined on the time slab $n-1$ for $n > 0$. We furthermore remark that
$\boldsymbol{\varepsilon_h^{q-}}$ = 0 and $\varepsilon_h^{\lambda -} = 0$
when $n = 0$.

% If $n = 0$, then
% $\boldsymbol{\Pi_{V^-}}$ and $P^{f_{n-1}}_{M^-}$ are the
% $L^2$-projections on $\boldsymbol{V}_h$ and $M_h$ evaluated at $t=0$.
% \gsj{Should we say directly that $\boldsymbol{\epsilon_h^{q-}}$ = 0 and $\epsilon_h^{\lambda -} = 0$ for $n = 0$?}

The following lemma describes the error equations.

\begin{lemma}
  \label{lem:errorequations}
  The error equations are given by
  \begin{subequations}
    \begin{equation}
      \label{eq:error_q_equation_simple}
      \begin{split}
        & \del[1]{\varepsilon_h^v, f_n \nabla \cdot \boldsymbol{r}_h }_{\mathcal{T}^n}
        - \del[1]{\boldsymbol{\varepsilon^q}_h, \boldsymbol{r}_h f'_n}_{\mathcal{T}^n}
        + \langle \boldsymbol{\varepsilon^q}_h, \boldsymbol{r}_h f_n \rangle_{\mathcal{F}_{\Omega}^n(t_{n+1})}
        \\
        &- \del[1]{\boldsymbol{\varepsilon^q}_h, f_n \partial_t \boldsymbol{r}_h}_{\mathcal{T}^n}
        - \langle \varepsilon_h^{\lambda}, \boldsymbol{r}_h\cdot \boldsymbol{n} f_n \rangle_{\mathcal{F}_{\mathcal{Q}}^n}
        \\
        =&
        \langle \boldsymbol{\varepsilon^{q-}}_h, \boldsymbol{r}_h f_n \rangle_{\mathcal{F}_{\Omega}^n(t_{n})} - \del[1]{\boldsymbol{\Pi_V q} - \boldsymbol{q}, f_n \partial_t \boldsymbol{r}_h}_{\mathcal{T}^n}
        - \del[1]{\boldsymbol{\Pi_V q} - \boldsymbol{q}, \boldsymbol{r}_h f'_n}_{\mathcal{T}^n}
        \\
        &+ \langle \boldsymbol{\Pi_V q} - \boldsymbol{q}, \boldsymbol{r}_h f_n \rangle_{\mathcal{F}_{\Omega}^n(t_{n+1})}
        - \langle \boldsymbol{\Pi_{V^-} q} - \boldsymbol{q}, \boldsymbol{r}_h f_n \rangle_{\mathcal{F}_{\Omega}^n(t_{n})},        
      \end{split}
    \end{equation}
    \begin{equation}
      \label{eq:error_v_equation_simple}
      \del[1]{\boldsymbol{\varepsilon^q}_h, f_n \nabla w_h }_{\mathcal{T}^n}
      - \langle \widehat{\boldsymbol{\varepsilon}}_h \cdot \boldsymbol{n}, w_h f_n \rangle_{\mathcal{F}_{\mathcal{Q}}^n}
      = 0,
    \end{equation}
    %
%    and
    %
    \begin{equation}
      \label{eq:error_lambda_equation_simple}
      \begin{split}
      \langle \widehat{\boldsymbol{\varepsilon}}_h
      \cdot \boldsymbol{n},& \mu_h f_n \rangle_{\mathcal{F}_{\mathcal{Q}}^n}
      - \langle \varepsilon_h^{\lambda}, f_n \partial_t \mu_h \rangle_{\mathcal{F}_{\mathcal{S}}^n}
      - \langle \varepsilon_h^{\lambda}, \mu_h f'_n \rangle_{\mathcal{F}_{\mathcal{S}}^n}
      + \llangle \varepsilon_h^{\lambda}, \mu_h f_n \rrangle_{\partial \mathcal{E}_{\mathcal{S}}^n(t_{n+1})}
      \\
      =&
      \llangle \varepsilon_h^{\lambda-}, \mu_h f_n \rrangle_{\partial \mathcal{E}_{\mathcal{S}}^n(t_{n})}
      + \llangle P^{f_n}_M v - v, \mu_h f_n \rrangle_{\partial \mathcal{E}_{\mathcal{S}}^n(t_{n+1})}
      \\
      &- \llangle P^{f_{n-1}}_{M^-} v - v, \mu_h f_n \rrangle_{\partial \mathcal{E}_{\mathcal{S}}^n(t_{n})},
      \end{split}
    \end{equation}
    \label{eq:error_equations_simple}
  \end{subequations}
  where
  $\widehat{\boldsymbol{\varepsilon}}_h =
  \boldsymbol{\varepsilon^{q}}_h - \tau \del[1]{\varepsilon_h^{v} -
    \varepsilon_h^{\lambda}}\boldsymbol{n}$.
\end{lemma}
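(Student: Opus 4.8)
The plan is to derive the error equations by combining a Galerkin-consistency argument with the defining orthogonality properties of the projections. First I would observe that the exact solution $(\boldsymbol{q}, v)$ of \cref{eq:mixedformlinfreesurface}, together with the single-valued trace $\lambda := v|_{\Gamma_{\mathcal{Q}}^n}$, satisfies the three discrete relations \cref{eq:HDG_q_equation}--\cref{eq:HDG_lambda_equation} exactly. To see this I would test $\partial_t\boldsymbol{q} - \nabla v = 0$ (\cref{eq:qDef}) with $f_n\boldsymbol{r}_h$ and integrate by parts in time and space to reproduce \cref{eq:HDG_q_equation} with $(\boldsymbol{q}, v, \lambda)$ in place of $(\boldsymbol{q}_h, v_h, \lambda_h)$; use $\nabla\cdot\boldsymbol{q}=0$ (\cref{eq:LaplaceForq}) together with $v=\lambda$ on faces to reproduce \cref{eq:HDG_v_equation}; and use the free-surface condition (\cref{eq:FS_cond}), the no-flow condition (\cref{eq:NeumBCq}), and continuity of the normal flux across interior faces to reproduce \cref{eq:HDG_lambda_equation}. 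Subtracting the discrete equations from these consistency identities yields three equations for the true errors $\boldsymbol{q}-\boldsymbol{q}_h$, $v-v_h$, and $\lambda-\lambda_h$.

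The second step is to split each true error through the projections, writing $\boldsymbol{q}-\boldsymbol{q}_h = \boldsymbol{\varepsilon^q}_h + (\boldsymbol{q}-\boldsymbol{\Pi_V q})$, $v-v_h = \varepsilon_h^v + (v-\Pi_W v)$ and $v-\lambda_h = \varepsilon_h^{\lambda} + (v - P_M^{f_n}v)$, and then to eliminate as many interpolation terms as possible using \cref{eq:def_projection}. The key observations are polynomial-degree bookkeeping facts: $\nabla\cdot\boldsymbol{r}_h \in \widetilde{W}_h(\mathcal{K})$ and $\nabla w_h \in \widetilde{\boldsymbol{V}}_h(\mathcal{K})$, so the first two equations of \cref{eq:def_projection} annihilate $\del{\Pi_W v - v, f_n\nabla\cdot\boldsymbol{r}_h}_{\mathcal{T}^n}$ and $\del{\boldsymbol{\Pi_V q}-\boldsymbol{q}, f_n\nabla w_h}_{\mathcal{T}^n}$ respectively; and the traces $\boldsymbol{r}_h\cdot\boldsymbol{n}|_{\mathcal{F}}$, $w_h|_{\mathcal{F}}$ and $\mu_h$ all lie in $M_h(\mathcal{F})$, so the third equation of \cref{eq:def_projection} together with the definition of $P_M^{f_n}$ collapses the face interpolation combination $(\boldsymbol{q}-\boldsymbol{\Pi_V q})\cdot\boldsymbol{n} + \tau(\Pi_W v - v) - \tau(P_M^{f_n}v - v)$ to zero when tested against any member of $M_h(\mathcal{F})$. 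This is exactly what converts the residual boundary flux into $\widehat{\boldsymbol{\varepsilon}}_h\cdot\boldsymbol{n}$ and produces \cref{eq:error_v_equation_simple}. For \cref{eq:error_q_equation_simple}, by contrast, the volume terms weighted by $f_n$ and $f'_n$ survive (because $\boldsymbol{r}_h \in \boldsymbol{V}_h$, not $\widetilde{\boldsymbol{V}}_h$), and the bottom time-level term $\langle\boldsymbol{q}-\boldsymbol{q}_h^-,\boldsymbol{r}_h f_n\rangle_{\mathcal{F}_{\Omega}^n(t_n)}$ splits through the previous-slab projection into $\boldsymbol{\varepsilon^{q-}}_h$ and the remainder $\boldsymbol{\Pi_{V^-}q}-\boldsymbol{q}$; all of these land on the right-hand side as the stated interpolation remainder.

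For \cref{eq:error_lambda_equation_simple} I would again subtract, substitute $\lambda-\lambda_h = \varepsilon_h^{\lambda} - (P_M^{f_n}v - v)$ on $\mathcal{F}_{\mathcal{S}}^n$, and exploit that $\partial_t\mu_h \in M_h(\mathcal{F})$ and $\mu_h f'_n = -\alpha\mu_h f_n$ with $\mu_h \in M_h(\mathcal{F})$: by the definition of $P_M^{f_n}$ both $\langle P_M^{f_n}v - v, f_n\partial_t\mu_h\rangle_{\mathcal{F}_{\mathcal{S}}^n}$ and $\langle P_M^{f_n}v - v, \mu_h f'_n\rangle_{\mathcal{F}_{\mathcal{S}}^n}$ vanish. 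The surviving interpolation contributions are the edge terms on $\partial\mathcal{E}_{\mathcal{S}}^n(t_{n+1})$ and $\partial\mathcal{E}_{\mathcal{S}}^n(t_n)$, which are traces onto the time levels and are not controlled by the face projection $P_M^{f_n}$; these become precisely the right-hand side of \cref{eq:error_lambda_equation_simple}, with $\varepsilon_h^{\lambda-}$ arising from $\lambda^- - \lambda_h^-$.

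The hard part will be the consistency of the $\lambda$-equation: one must carry out the time integration by parts of $-\langle\partial_t\lambda,\mu_h f_n\rangle_{\mathcal{F}_{\mathcal{S}}^n}$ through the product rule $\partial_t(\lambda\mu_h f_n)$ so that the free-surface boundary condition closes against the $\partial\mathcal{E}_{\mathcal{S}}^n(t_{n+1})$ and $\partial\mathcal{E}_{\mathcal{S}}^n(t_n)$ edge terms while the interior and Neumann flux contributions cancel, using single-valuedness of $\lambda$ and $\mu_h$. Throughout, the delicate bookkeeping is to track exactly which interpolation terms are annihilated by membership in $\widetilde{W}_h$, $\widetilde{\boldsymbol{V}}_h$, or $M_h(\mathcal{F})$ and which are not; the proportionality $f'_n = -\alpha f_n$ is what makes the $f'_n$-weighted surface and trace terms amenable to the $P_M^{f_n}$ orthogonality and must be invoked carefully.
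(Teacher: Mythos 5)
Your proposal is correct and follows essentially the same route as the paper: establish that the exact solution $(\boldsymbol{q},v)$ with $\lambda = v|_{\Gamma_{\mathcal{Q}}^n}$ satisfies the HDG scheme, subtract to get equations for the true errors, split those errors through $\Pi_h$ and $P_M^{f_n}$, and annihilate the interpolation terms using exactly the orthogonality facts you list ($\nabla\cdot\boldsymbol{r}_h\in\widetilde{W}_h$, $\nabla w_h\in\widetilde{\boldsymbol{V}}_h$, traces lying in $M_h(\mathcal{F})$, and $f_n'=-\alpha f_n$). The only difference is that you spell out the consistency verification (time integration by parts of the kinematic condition, cancellation of interior/Neumann fluxes), which the paper asserts by direct substitution; this is a detail-level refinement, not a different argument.
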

\begin{proof}
  Substituting the exact solution $\del[0]{\boldsymbol{q}, v}$ to
  \cref{eq:mixedformlinfreesurface} into the space-time HDG method
  \cref{eq:HDGProblem}, we find:
  \begin{subequations}
    \begin{multline}
      \label{eq:exact_q_equation}
      - \del[1]{\boldsymbol{q}, f_n\partial_t \boldsymbol{r}_h}_{\mathcal{T}^n}
      - \del[1]{\boldsymbol{q}, \boldsymbol{r}_h f'_n}_{\mathcal{T}^n}
      + \langle \boldsymbol{q}, \boldsymbol{r}_h f_n \rangle_{\mathcal{F}_{\Omega}^n(t_{n+1})}
      \\
      + \del[1]{v, f_n \nabla \cdot \boldsymbol{r}_h}_{\mathcal{T}^n}
      - \langle v, \boldsymbol{r}_h\cdot \boldsymbol{n} f_n \rangle_{\mathcal{F}_{\mathcal{Q}}^n} =
      \langle \boldsymbol{q}, \boldsymbol{r}_h f_n \rangle_{\mathcal{F}_{\Omega}^n(t_{n})},
    \end{multline}
    \begin{equation}
      \label{eq:exact_v_equation}
      -\del[1]{w_h, f_n\nabla \cdot \boldsymbol{q}}_{\mathcal{T}^n} = 0,                                        
    \end{equation}
    % 
%    and
    %
    \begin{multline}
      \label{eq:exact_lambda_equation}
      \langle \boldsymbol{q} \cdot \boldsymbol{n}, \mu_h f_n  \rangle_{\mathcal{F}_{\mathcal{Q}}^n}
      - \langle v , f_n \partial_t \mu_h \rangle_{\mathcal{F}_{\mathcal{S}}^n}
      - \langle v, \mu_h f'_n \rangle_{\mathcal{F}_{\mathcal{S}}^n}
      + \llangle v, \mu_h f_n \rrangle_{\partial \mathcal{E}_{\mathcal{S}}^n(t_{n+1})}
      \\
      =
      \llangle v, \mu_h f_n \rrangle_{\partial \mathcal{E}_{\mathcal{S}}^n(t_{n})}.
    \end{multline}
    \label{eq:exactProblem}
  \end{subequations}
  Subtracting now \eqref{eq:HDGProblem} from \eqref{eq:exactProblem},
  we obtain
  \begin{subequations}
    \begin{multline}
      \label{eq:error_q_equation}
      - \del[1]{\boldsymbol{q} - \boldsymbol{q}_h, f_n \partial_t\boldsymbol{r}_h}_{\mathcal{T}^n}
      - \del[1]{\boldsymbol{q} - \boldsymbol{q}_h, \boldsymbol{r}_h f'_n }_{\mathcal{T}^n}
      + \langle \boldsymbol{q} - \boldsymbol{q}_h, \boldsymbol{r}_h f_n \rangle_{\mathcal{F}_{\Omega}^n(t_{n+1})}
      \\
      + \del[1]{v - v_h, f_n \nabla \cdot \boldsymbol{r}_h}_{\mathcal{T}^n}
      - \langle v - \lambda_h, \boldsymbol{r}_h\cdot \boldsymbol{n} f_n \rangle_{\mathcal{F}_{\mathcal{Q}}^n} =
      \langle \boldsymbol{q} - \boldsymbol{q}_h^-, \boldsymbol{r}_h f_n \rangle_{\mathcal{F}_{\Omega}^n(t_{n})},
    \end{multline}
    \begin{equation}
      \label{eq:error_v_equation}
      -\del[1]{w_h f_n, \nabla\cdot(\boldsymbol{q} - \boldsymbol{q}_h)}_{\mathcal{T}^n} 
      - \langle \tau\del{v_h - \lambda_h}, w_h f_n \rangle_{\mathcal{F}_{\mathcal{Q}}^n} = 0,                                       
    \end{equation}
    % 
%    and
    \begin{multline}
      \label{eq:error_lambda_equation}
      \langle \del[1]{\boldsymbol{q} - \boldsymbol{q}_h}\cdot \boldsymbol{n}
      + \tau\del[1]{v_h - \lambda_h}, \mu_h f_n \rangle_{\mathcal{F}_{\mathcal{Q}}^n}
      - \langle v - \lambda_h, f_n \partial_t \mu_h \rangle_{\mathcal{F}_{\mathcal{S}}^n}
      \\
      - \langle v - \lambda_h, \mu_h f'_n \rangle_{\mathcal{F}_{\mathcal{S}}^n}
      + \llangle v - \lambda_h, \mu_h f_n \rrangle_{\partial \mathcal{E}_{\mathcal{S}}^n(t_{n+1})} =
      \llangle v - \lambda_h^-, \mu_h f_n \rrangle_{\partial \mathcal{E}_{\mathcal{S}}^n(t_{n})}.
    \end{multline}
    \label{eq:error_equations}
  \end{subequations}

  We next split the numerical errors as
  $\boldsymbol{q} - \boldsymbol{q}_h = \boldsymbol{q} -
  \boldsymbol{\Pi_V q} + \boldsymbol{\varepsilon^q}_h$,
  $\boldsymbol{q} - \boldsymbol{q}_h^- = \boldsymbol{q} -
  \boldsymbol{\Pi_{V^-} q} + \boldsymbol{\varepsilon^{q-}}_h$,
  $v - v_h = v - \Pi_W v + \varepsilon_h^{v}$,
  $v - \lambda_h = v - P^{f_n}_M v + \varepsilon_h^{\lambda}$ and
  $v - \lambda_h^- = v - P^{f_{n-1}}_{M^-} v + \varepsilon_h^{\lambda
    -}$. Note also that
  \begin{equation}
    \begin{aligned}
      \boldsymbol{q} - \widehat{\boldsymbol{q}}_h
      & = \boldsymbol{q} - \boldsymbol{q}_h - \tau \del[1]{\lambda_h - v_h}\boldsymbol{n}
      = \boldsymbol{q} - \boldsymbol{q}_h - \tau \del[1]{v - v_h - v + \lambda_h}\boldsymbol{n}
      \\
      & = 
      \boldsymbol{\varepsilon^q}_h - \tau \del[1]{\varepsilon_h^{v} - \varepsilon_h^{\lambda}}\boldsymbol{n}
      + \boldsymbol{q} - \boldsymbol{\Pi_V q} - \tau \del[1]{v - \Pi_W v - \del[1]{v - P^{f_n}_M v}}\boldsymbol{n} 
      \\
      & = 
      \widehat{\boldsymbol{\varepsilon}}_h + \boldsymbol{q} - \boldsymbol{\Pi_V q}
      - \tau \del[1]{v - \Pi_W v - \del[1]{v - P^{f_n}_M v}}\boldsymbol{n}.
    \end{aligned}
  \end{equation}
  We will write \cref{eq:error_equations} in terms of the projection
  and approximation errors.

  Consider first \cref{eq:error_q_equation}:
  \begin{multline}
    \label{eq:error_q_equation2}
    - \del[1]{\boldsymbol{\varepsilon^q}_h, f_n \partial_t \boldsymbol{r}_h}_{\mathcal{T}^n}
    - \del[1]{\boldsymbol{\varepsilon^q}_h, \boldsymbol{r}_h f'_n}_{\mathcal{T}^n}
    + \langle \boldsymbol{\varepsilon^q}_h, \boldsymbol{r}_h f_n \rangle_{\mathcal{F}_{\Omega}^n(t_{n+1})}
    + \del[1]{\varepsilon_h^v, f_n \nabla \cdot \boldsymbol{r}_h }_{\mathcal{T}^n}
    \\
    - \langle \varepsilon_h^{\lambda}, \boldsymbol{r}_h\cdot \boldsymbol{n} f_n \rangle_{\mathcal{F}_{\mathcal{Q}}^n}
    \\
    = - \del[1]{\boldsymbol{\Pi_V q} - \boldsymbol{q}, f_n \partial_t \boldsymbol{r}_h}_{\mathcal{T}^n}
    - \del[1]{\boldsymbol{\Pi_V q} - \boldsymbol{q}, \boldsymbol{r}_h f'_n}_{\mathcal{T}^n}
    + \langle \boldsymbol{\Pi_V q} - \boldsymbol{q}, \boldsymbol{r}_h f_n \rangle_{\mathcal{F}_{\Omega}^n(t_{n+1})}
    \\
    + \del[1]{\Pi_W v - v, f_n \nabla \cdot \boldsymbol{r}_h }_{\mathcal{T}^n}
    - \langle P^{f_n}_M v - v, \boldsymbol{r}_h\cdot \boldsymbol{n} f_n  \rangle_{\mathcal{F}_{\mathcal{Q}}^n}
    \\
    + \langle \boldsymbol{\varepsilon^{q-}}_h, \boldsymbol{r}_h f_n \rangle_{\mathcal{F}_{\Omega}^n(t_{n})}
    - \langle \boldsymbol{\Pi_{V^-} q} - \boldsymbol{q}, \boldsymbol{r}_h f_n \rangle_{\mathcal{F}_{\Omega}^n(t_{n})},
  \end{multline}
  which simplifies to
  \begin{multline}
    \label{eq:error_q_equation_simple1}
    - \del[1]{\boldsymbol{\varepsilon^q}_h, f_n \partial_t \boldsymbol{r}_h}_{\mathcal{T}^n}
    - \del[1]{\boldsymbol{\varepsilon^q}_h, \boldsymbol{r}_h f'_n}_{\mathcal{T}^n}
    + \langle \boldsymbol{\varepsilon^q}_h, \boldsymbol{r}_h f_n \rangle_{\mathcal{F}_{\Omega}^n(t_{n+1})}
    \\
    + \del[1]{\varepsilon_h^v, f_n \nabla \cdot \boldsymbol{r}_h }_{\mathcal{T}^n}
    - \langle \varepsilon_h^{\lambda}, \boldsymbol{r}_h\cdot \boldsymbol{n} f_n \rangle_{\mathcal{F}_{\mathcal{Q}}^n}
    = - \del[1]{\boldsymbol{\Pi_V q} - \boldsymbol{q}, f_n \partial_t \boldsymbol{r}_h}_{\mathcal{T}^n}
    \\ 
    - \del[1]{\boldsymbol{\Pi_V q} - \boldsymbol{q}, \boldsymbol{r}_h f'_n}_{\mathcal{T}^n}
    + \langle \boldsymbol{\Pi_V q} - \boldsymbol{q}, \boldsymbol{r}_h f_n \rangle_{\mathcal{F}_{\Omega}^n(t_{n+1})}
    \\
    + \langle \boldsymbol{\varepsilon^{q-}}_h, \boldsymbol{r}_h f_n \rangle_{\mathcal{F}_{\Omega}^n(t_{n})}
    - \langle \boldsymbol{\Pi_{V^-} q} - \boldsymbol{q}, \boldsymbol{r}_h f_n \rangle_{\mathcal{F}_{\Omega}^n(t_{n})},
  \end{multline}
  using the properties of the projections $\Pi_W$ and
  $P^{f_n}_M$. This proves \cref{eq:error_q_equation_simple}.

  We consider next \cref{eq:error_v_equation}. Integrating
  \cref{eq:error_v_equation} by parts in space,
  \begin{equation}
    \del[1]{\boldsymbol{q} - \boldsymbol{q}_h, f_n \nabla w_h }_{\mathcal{T}^n} - 
    \langle \del[1]{\boldsymbol{q} - \boldsymbol{q}_h} 
    \cdot \boldsymbol{n} - \tau\del[1]{\lambda_h - v_h}, w_h f_n \rangle_{\mathcal{F}_{\mathcal{Q}}^n} = 0.
  \end{equation}
  We next write this equation in terms of the projection and
  approximation errors:
  \begin{equation}
    \label{eq:error_v_equation2}
    \begin{split}
      \del[1]{\boldsymbol{\varepsilon^q}_h, f_n \nabla w_h }_{\mathcal{T}^n} 
      - \langle & \widehat{\boldsymbol{\varepsilon}}_h \cdot \boldsymbol{n}, w_h f_n \rangle_{\mathcal{F}_{\mathcal{Q}}^n}
      \\
      =& 
      \del[1]{\boldsymbol{\Pi_V q} - \boldsymbol{q}, f_n \nabla w_h }_{\mathcal{T}^n}
      - \langle \tau \del[1]{P^{f_n}_M v - v}, w_h f_n \rangle_{\mathcal{F}_{\mathcal{Q}}^n}
      \\
      &- \langle \del{\boldsymbol{\Pi_V q} - \boldsymbol{q}}\cdot \boldsymbol{n}- \tau \del[1]{\Pi_W v - v},
      w_h f_n \rangle_{\mathcal{F}_{\mathcal{Q}}^n}.
    \end{split}
  \end{equation}
  Using the properties of the projections $\Pi_h$ and $P^{f_n}_M$, we
  obtain
  \begin{equation}
    \label{eq:error_v_equation_simple1}
    \del[1]{\boldsymbol{\varepsilon^q}_h, f_n \nabla w_h }_{\mathcal{T}^n}
    - \langle \widehat{\boldsymbol{\varepsilon}}_h \cdot \boldsymbol{n}, w_h f_n \rangle_{\mathcal{F}_{\mathcal{Q}}^n}
    = 0,
  \end{equation}
  proving \cref{eq:error_v_equation_simple}.

  Finally, we write \cref{eq:error_lambda_equation} in terms of the
  projection and approximation errors:
  \begin{equation}
    \label{eq:error_lambda_equation2}
    \begin{split}
      \langle \widehat{\boldsymbol{\varepsilon}}_h
      \cdot \boldsymbol{n},& \mu_h f_n \rangle_{\mathcal{F}_{\mathcal{Q}}^n}
      - \langle \varepsilon_h^{\lambda}, f_n \partial_t \mu_h \rangle_{\mathcal{F}_{\mathcal{S}}^n}
      - \langle \varepsilon_h^{\lambda}, \mu_h f'_n \rangle_{\mathcal{F}_{\mathcal{S}}^n}
      + \llangle \varepsilon_h^{\lambda}, \mu_h f_n \rrangle_{\partial \mathcal{E}_{\mathcal{S}}^n(t_{n+1})}
      \\
      =& 
      \langle \del{\boldsymbol{\Pi_V q} - \boldsymbol{q}}\cdot \boldsymbol{n} - \tau \del[1]{\Pi_W v - v},
      \mu_h f_n \rangle_{\mathcal{F}_{\mathcal{Q}}^n}
      + \langle \tau \del[1]{P^{f_n}_M v - v}, \mu_h f_n \rangle_{\mathcal{F}_{\mathcal{Q}}^n}
      \\
      &- \langle P^{f_n}_M v - v, f_n \partial_t \mu_h \rangle_{\mathcal{F}_{\mathcal{S}}^n}
      - \langle P^{f_n}_M v - v, \mu_h f'_n \rangle_{\mathcal{F}_{\mathcal{S}}^n}
      + \llangle P^{f_n}_M v - v, \mu_h f_n \rrangle_{\partial \mathcal{E}_{\mathcal{S}}^n(t_{n+1})}
      \\
      &+ \llangle \varepsilon_h^{\lambda-}, \mu_h f_n \rrangle_{\partial \mathcal{E}_{\mathcal{S}}^n(t_{n})}
      - \llangle P^{f_{n-1}}_{M^-} v - v, \mu_h f_n \rrangle_{\partial \mathcal{E}_{\mathcal{S}}^n(t_{n})}.      
    \end{split}
  \end{equation}
  Using the properties of the projections $\Pi_h$ and $P^{f_n}_M$, we
  obtain
  \begin{equation}
    \label{eq:error_lambda_equation_simple1}
    \begin{split}
    \langle \widehat{\boldsymbol{\varepsilon}}_h
    \cdot \boldsymbol{n}, &\mu_h f_n \rangle_{\mathcal{F}_{\mathcal{Q}}^n}
    - \langle \varepsilon_h^{\lambda}, f_n \partial_t \mu_h \rangle_{\mathcal{F}_{\mathcal{S}}^n}
    - \langle \varepsilon_h^{\lambda}, \mu_h f'_n \rangle_{\mathcal{F}_{\mathcal{S}}^n}
    + \llangle \varepsilon_h^{\lambda}, \mu_h f_n \rrangle_{\partial \mathcal{E}_{\mathcal{S}}^n(t_{n+1})}
    \\
    =&
     \llangle \varepsilon_h^{\lambda-}, \mu_h f_n \rrangle_{\partial \mathcal{E}_{\mathcal{S}}^n(t_{n})}
     + \llangle P^{f_n}_M v - v, \mu_h f_n \rrangle_{\partial \mathcal{E}_{\mathcal{S}}^n(t_{n+1})}
     \\
    &- \llangle P^{f_{n-1}}_{M^-} v - v, \mu_h f_n \rrangle_{\partial \mathcal{E}_{\mathcal{S}}^n(t_{n})},      
    \end{split}
  \end{equation}
  proving \cref{eq:error_lambda_equation_simple}.
\end{proof}

Next, we prove a bound for the projection errors.

\begin{lemma}
  \label{lem:bounds_epsilons}
  The following bound holds for the projection errors:
  \begin{equation}
    \begin{split}
	    \tfrac{\alpha}{2} \norm[0]{\boldsymbol{\varepsilon^q}_h}_{f_n, \mathcal{T}^n}^2
	    &+ \tfrac{\alpha}{2} \norm[0]{\varepsilon^{\lambda}_h}_{f_n, \mathcal{F}_{\mathcal{S}}^n}^2
	    +  e^{-\alpha \Delta t}\norm[0]{\boldsymbol{\varepsilon^{q}}_h}^2_{\mathcal{F}_{\Omega}^n(t_{n+1})}
	    +  e^{-\alpha \Delta t}\norm[0]{\varepsilon^{\lambda}_h}^2_{\partial\mathcal{E}_{\mathcal{S}}^n(t_{n+1})}
	    \\      
	    \leq
	    &  \norm[0]{\boldsymbol{\varepsilon^{q-}}_h}^2_{\mathcal{F}_{\Omega}^n(t_n)}
	    +  \norm[0]{\varepsilon^{\lambda-}_h}^2_{\partial\mathcal{E}_{\mathcal{S}}^n(t_n)}
	    + C \Delta t^{-2}\norm[0]{\boldsymbol{q} - \boldsymbol{\Pi_V q}}^2_{f_n, \mathcal{T}^n}
	    \\
	    &+ C \norm[0]{\boldsymbol{\Pi_V q} - \boldsymbol{q}}^2_{f_n, \mathcal{T}^n}
	    + C \Delta t^{-1}\norm[0]{\boldsymbol{\Pi_V q} - \boldsymbol{q}}^2_{f_n,\mathcal{F}_{\Omega}^n(t_{n+1})}
	    \\
	    &+ C \Delta t^{-1}\norm[0]{\boldsymbol{\Pi_{V^-} q} - \boldsymbol{q}}^2_{f_n,\mathcal{F}_{\Omega}^n(t_n)}
	    + C \Delta t^{-1} \norm[0]{P^{f_n}_M v - v}^2_{f_n, \partial \mathcal{E}_{\mathcal{S}}^n(t_{n+1})}
	    \\
	    &+ C \Delta t^{-1} \norm[0]{P^{f_{n-1}}_{M^-} v - v}^2_{f_n, \partial \mathcal{E}_{\mathcal{S}}^n(t_n)}.
    \end{split}
  \end{equation}
\end{lemma}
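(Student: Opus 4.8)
The plan is to follow the same energy-argument strategy used in the well-posedness proof (Theorem 3.1), but now applied to the error equations of Lemma 4.11. First I would choose the test functions $\boldsymbol{r}_h = \boldsymbol{\varepsilon^q}_h$ in \eqref{eq:error_q_equation_simple}, $w_h = \varepsilon_h^v$ in \eqref{eq:error_v_equation_simple}, and $\mu_h = \varepsilon_h^{\lambda}$ in \eqref{eq:error_lambda_equation_simple}, and then add the three resulting identities. As in the passage from \eqref{eq:test_equals_trial} to \eqref{eq:test_equals_trial2}, the coupling volume and facet terms on $\mathcal{F}_{\mathcal{Q}}^n$ will cancel, and the integration-by-parts-in-time identities \eqref{eq:firsttermlhs} and \eqref{eq:fifthtermlhs} (applied now to $\boldsymbol{\varepsilon^q}_h$ and $\varepsilon_h^\lambda$) will convert the skew time derivatives into boundary contributions at $t_n$ and $t_{n+1}$ plus an interior term weighted by $f'_n = -\alpha f_n$.

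The key structural point is that collecting these terms on the left-hand side produces exactly the coercive quantities appearing in the statement: the $-\tfrac12(\lvert\boldsymbol{\varepsilon^q}_h\rvert^2, f'_n)_{\mathcal{T}^n}$ and $-\tfrac12\langle(\varepsilon_h^\lambda)^2, f'_n\rangle_{\mathcal{F}_{\mathcal{S}}^n}$ terms give $\tfrac{\alpha}{2}\norm{\boldsymbol{\varepsilon^q}_h}_{f_n,\mathcal{T}^n}^2$ and $\tfrac{\alpha}{2}\norm{\varepsilon_h^\lambda}_{f_n,\mathcal{F}_{\mathcal{S}}^n}^2$ since $f'_n = -\alpha f_n$, while the outflow boundary terms at $t_{n+1}$ give the $e^{-\alpha\Delta t}$-weighted norms (using $f_n(t_{n+1}) = e^{-\alpha\Delta t}$). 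The inflow boundary terms at $t_n$, together with the data from the right-hand sides of the error equations, are moved to the right and become the first two terms $\norm{\boldsymbol{\varepsilon^{q-}}_h}^2_{\mathcal{F}_{\Omega}^n(t_n)}$ and $\norm{\varepsilon_h^{\lambda-}}^2_{\partial\mathcal{E}_{\mathcal{S}}^n(t_n)}$ after a Cauchy--Schwarz/Young step, and the $\tau$-stabilization term $\langle\tau(\varepsilon_h^v-\varepsilon_h^\lambda)^2,f_n\rangle_{\mathcal{F}_{\mathcal{Q}}^n}$ is nonnegative and simply dropped.

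Next I would bound the remaining right-hand-side contributions, namely the projection-approximation terms involving $\boldsymbol{\Pi_V q}-\boldsymbol{q}$, $\boldsymbol{\Pi_{V^-}q}-\boldsymbol{q}$, $P^{f_n}_M v - v$ and $P^{f_{n-1}}_{M^-}v - v$. These are handled by Cauchy--Schwarz followed by Young's inequality, absorbing the factors multiplying $\boldsymbol{\varepsilon^q}_h$, $\varepsilon_h^\lambda$, and their trace/time-slice versions into the coercive left-hand side. The main obstacle, and the source of the negative powers of $\Delta t$, is the term $-(\boldsymbol{\Pi_V q}-\boldsymbol{q}, f_n\partial_t\boldsymbol{r}_h)_{\mathcal{T}^n}$: here $\partial_t\boldsymbol{r}_h = \partial_t\boldsymbol{\varepsilon^q}_h$ is a time derivative of a discrete function, so I would control it using the inverse inequality in time (the temporal analogue of \eqref{eq:inv_trace_ineq_K0}, giving $\norm{\partial_t\boldsymbol{\varepsilon^q}_h}_{\mathcal{K}}\le C\Delta t^{-1}\norm{\boldsymbol{\varepsilon^q}_h}_{\mathcal{K}}$), which is precisely what forces the $C\Delta t^{-2}\norm{\boldsymbol{q}-\boldsymbol{\Pi_V q}}^2_{f_n,\mathcal{T}^n}$ factor. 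Similarly the boundary terms at $t_n$ and $t_{n+1}$ against $\boldsymbol{\varepsilon^q}_h$ and $\varepsilon_h^\lambda$ require the inverse trace inequalities \eqref{eq:inv_trace_ineq_K0} and \eqref{eq:inv_trace_ineq_F0} to trade a time-slice norm for a $\Delta t^{-1/2}$ times a volume norm before Young's inequality, producing the $C\Delta t^{-1}$ prefactors on those projection-error terms. Tracking these inverse-inequality powers carefully, and ensuring each absorbed coercive term has a small enough constant, is the delicate bookkeeping at the heart of the proof.
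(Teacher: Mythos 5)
Your proposal follows essentially the same route as the paper's proof: test the error equations with the errors themselves, integrate by parts in time so that $f_n' = -\alpha f_n$ yields the $\alpha$-coercive volume terms and $f_n(t_{n+1})=e^{-\alpha\Delta t}$ yields the outflow terms, drop the nonnegative $\tau$-stabilization term, control $\partial_t\boldsymbol{\varepsilon^q}_h$ by the temporal inverse inequality (source of $\Delta t^{-2}$) and the time-slice traces by the inverse trace inequalities (source of $\Delta t^{-1}$), then absorb via Young's inequality with constants proportional to $\alpha$. The only cosmetic difference is that the paper handles the inflow data terms at $t_n$ with an exact completing-the-square identity rather than your Cauchy--Schwarz/Young step, which yields the same bound.
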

\begin{proof}
  Take $\boldsymbol{r}_h = \boldsymbol{\varepsilon^q}_h$ in
  \cref{eq:error_q_equation_simple}, $w_h = \varepsilon^v_h$ in
  \cref{eq:error_v_equation_simple} and
  $\mu_h = \varepsilon^{\lambda}_h$ in
  \cref{eq:error_lambda_equation_simple}. Adding the resulting
  equations we obtain
  \begin{equation}
    \begin{split}
    &- \del[1]{\boldsymbol{\varepsilon^q}_h, f_n \partial_t \boldsymbol{\varepsilon^q}_h}_{\mathcal{T}^n}
    - \del[1]{f'_n\boldsymbol{\varepsilon^q}_h, \boldsymbol{\varepsilon^q}_h}_{\mathcal{T}^n}
    + \langle f_n \boldsymbol{\varepsilon^q}_h, \boldsymbol{\varepsilon^q}_h \rangle_{\mathcal{F}_{\Omega}^n(t_{n+1})}
    \\
    &+ \del[1]{\varepsilon_h^v, f_n \nabla \cdot \boldsymbol{\varepsilon^q}_h }_{\mathcal{T}^n}
    - \langle \varepsilon_h^{\lambda}, \boldsymbol{\varepsilon^q}_h\cdot \boldsymbol{n} f_n \rangle_{\mathcal{F}_{\mathcal{Q}}^n}
    + \del[1]{\boldsymbol{\varepsilon^q}_h, f_n \nabla \varepsilon^v_h }_{\mathcal{T}^n}
    - \langle \widehat{\boldsymbol{\varepsilon}}_h \cdot \boldsymbol{n}, \varepsilon^v_h f_n \rangle_{\mathcal{F}_{\mathcal{Q}}^n}
    \\
    &+ \langle \widehat{\boldsymbol{\varepsilon}}_h
    \cdot \boldsymbol{n}, \varepsilon^{\lambda}_h f_n \rangle_{\mathcal{F}_{\mathcal{Q}}^n}
    - \langle \varepsilon_h^{\lambda}, f_n \partial_t \varepsilon^{\lambda}_h \rangle_{\mathcal{F}_{\mathcal{S}}^n}
    - \langle f'_n \varepsilon_h^{\lambda}, \varepsilon^{\lambda}_h \rangle_{\mathcal{F}_{\mathcal{S}}^n}
    + \llangle f_n \varepsilon_h^{\lambda}, \varepsilon^{\lambda}_h \rrangle_{\partial \mathcal{E}_{\mathcal{S}}^n(t_{n+1})}
    \\
    =&
    \langle \boldsymbol{\varepsilon^{q-}}_h, \boldsymbol{\varepsilon^q}_h f_n \rangle_{\mathcal{F}_{\Omega}^n(t_{n})}
    + \llangle \varepsilon_h^{\lambda-}, \varepsilon^{\lambda}_h f_n \rrangle_{\partial \mathcal{E}_{\mathcal{S}}^n(t_{n})}
    \\
    &- \del[1]{\boldsymbol{\Pi_V q} - \boldsymbol{q}, f_n \partial_t \boldsymbol{\varepsilon^q}_h}_{\mathcal{T}^n}
    - \del[1]{\boldsymbol{\Pi_V q} - \boldsymbol{q}, \boldsymbol{\varepsilon^q}_h f'_n}_{\mathcal{T}^n}
    \\
    &+ \langle \boldsymbol{\Pi_V q} - \boldsymbol{q}, \boldsymbol{\varepsilon^q}_h f_n \rangle_{\mathcal{F}_{\Omega}^n(t_{n+1})}
    - \langle \boldsymbol{\Pi_{V^-} q} - \boldsymbol{q}, \boldsymbol{\varepsilon^q}_h f_n \rangle_{\mathcal{F}_{\Omega}^n(t_{n})}
    \\
    &+ \llangle P^{f_n}_M v - v, \varepsilon^{\lambda}_h f_n \rrangle_{\partial \mathcal{E}_{\mathcal{S}}^n(t_{n+1})}   
    - \llangle P^{f_{n-1}}_{M^-} v - v, \varepsilon^{\lambda}_h f_n \rrangle_{\partial \mathcal{E}_{\mathcal{S}}^n(t_{n})}.      
    \end{split}
  \end{equation}
  Applying integration by parts with respect to time on the first and
  ninth terms, integration by parts with respect to space on the sixth
  term, and expanding out some terms:
  \begin{equation}
    \label{eq:test_equals_trial_error}
    \begin{split}
      &- \tfrac{1}{2}\del[1]{f'_n \boldsymbol{\varepsilon^q}_h, \boldsymbol{\varepsilon^q}_h}_{\mathcal{T}^n}
      + \tfrac{1}{2}\langle f_n\boldsymbol{\varepsilon^q}_h, \boldsymbol{\varepsilon^q}_h \rangle_{\mathcal{F}_{\Omega}^n(t_{n+1})} 
      + \tfrac{1}{2}\langle f_n\boldsymbol{\varepsilon^q}_h, \boldsymbol{\varepsilon^q}_h \rangle_{\mathcal{F}_{\Omega}^n(t_{n})} 
      \\
      &+ \langle \tau \del[1]{\varepsilon_h^v - \varepsilon^{\lambda}_h}, f_n \del[1]{\varepsilon_h^v - \varepsilon^{\lambda}_h} \rangle_{\mathcal{F}_{\mathcal{Q}}^n}
      - \tfrac{1}{2}\langle f'_n\varepsilon_h^{\lambda}, \varepsilon_h^{\lambda} \rangle_{\mathcal{F}_{\mathcal{S}}^n}
      + \tfrac{1}{2}\llangle f_n\varepsilon_h^{\lambda}, \varepsilon_h^{\lambda} \rrangle_{\partial \mathcal{E}_{\mathcal{S}}^n(t_{n+1})}
      \\
      &+ \tfrac{1}{2}\llangle f_n\varepsilon_h^{\lambda}, \varepsilon_h^{\lambda} \rrangle_{\partial \mathcal{E}_{\mathcal{S}}^n(t_{n})}
      \\
      =&
      \langle \boldsymbol{\varepsilon^{q-}}_h, \boldsymbol{\varepsilon^q}_h f_n \rangle_{\mathcal{F}_{\Omega}^n(t_{n})}
      + \llangle \varepsilon_h^{\lambda-}, \varepsilon^{\lambda}_h f_n \rrangle_{\partial \mathcal{E}_{\mathcal{S}}^n(t_{n})}
      - \del[1]{\boldsymbol{\Pi_V q} - \boldsymbol{q}, f_n \partial_t \boldsymbol{\varepsilon^q}_h}_{\mathcal{T}^n}
      \\
      &- \del[1]{\boldsymbol{\Pi_V q} - \boldsymbol{q}, \boldsymbol{\varepsilon^q}_h f'_n}_{\mathcal{T}^n}
      + \langle \boldsymbol{\Pi_V q} - \boldsymbol{q}, \boldsymbol{\varepsilon^q}_h f_n \rangle_{\mathcal{F}_{\Omega}^n(t_{n+1})}
      - \langle \boldsymbol{\Pi_{V^-} q} - \boldsymbol{q}, \boldsymbol{\varepsilon^q}_h f_n \rangle_{\mathcal{F}_{\Omega}^n(t_{n})}
      \\
      &+ \llangle P^{f_n}_M v - v, \varepsilon^{\lambda}_h f_n \rrangle_{\partial \mathcal{E}_{\mathcal{S}}^n(t_{n+1})}   
      - \llangle P^{f_{n-1}}_{M^-} v - v, \varepsilon^{\lambda}_h f_n \rrangle_{\partial \mathcal{E}_{\mathcal{S}}^n(t_{n})}.      
    \end{split}
  \end{equation}
  Moving the first two terms on the right hand side to the left hand
  side, we obtain
  \begin{equation}
  \label{eq:test_equals_trial_error2}
	  \begin{split}
		  &- \tfrac{1}{2}\del[1]{f'_n \boldsymbol{\varepsilon^q}_h, \boldsymbol{\varepsilon^q}_h}_{\mathcal{T}^n}
		  + \tfrac{1}{2}\langle f_n\boldsymbol{\varepsilon^q}_h, \boldsymbol{\varepsilon^q}_h \rangle_{\mathcal{F}_{\Omega}^n(t_{n+1})} 
		  + \tfrac{1}{2}\langle f_n\boldsymbol{\varepsilon^q}_h, \boldsymbol{\varepsilon^q}_h \rangle_{\mathcal{F}_{\Omega}^n(t_{n})} - \langle \boldsymbol{\varepsilon^{q-}}_h, \boldsymbol{\varepsilon^q}_h f_n \rangle_{\mathcal{F}_{\Omega}^n(t_{n})}
		  \\
		  &+ \langle \tau \del[1]{\varepsilon_h^v - \varepsilon^{\lambda}_h}, f_n \del[1]{\varepsilon_h^v - \varepsilon^{\lambda}_h} \rangle_{\mathcal{F}_{\mathcal{Q}}^n}
		  - \tfrac{1}{2}\langle f'_n\varepsilon_h^{\lambda}, \varepsilon_h^{\lambda} \rangle_{\mathcal{F}_{\mathcal{S}}^n}
		  + \tfrac{1}{2}\llangle f_n\varepsilon_h^{\lambda}, \varepsilon_h^{\lambda} \rrangle_{\partial \mathcal{E}_{\mathcal{S}}^n(t_{n+1})}
		  \\
		  &+ \tfrac{1}{2}\llangle f_n\varepsilon_h^{\lambda}, \varepsilon_h^{\lambda} \rrangle_{\partial \mathcal{E}_{\mathcal{S}}^n(t_{n})} - \llangle \varepsilon_h^{\lambda-}, \varepsilon^{\lambda}_h f_n \rrangle_{\partial \mathcal{E}_{\mathcal{S}}^n(t_{n})}
		  \\
		  =&
		  - \del[1]{\boldsymbol{\Pi_V q} - \boldsymbol{q}, f_n \partial_t \boldsymbol{\varepsilon^q}_h}_{\mathcal{T}^n}
		  \\
		  &- \del[1]{\boldsymbol{\Pi_V q} - \boldsymbol{q}, \boldsymbol{\varepsilon^q}_h f'_n}_{\mathcal{T}^n}
		  + \langle \boldsymbol{\Pi_V q} - \boldsymbol{q}, \boldsymbol{\varepsilon^q}_h f_n \rangle_{\mathcal{F}_{\Omega}^n(t_{n+1})}
		  - \langle \boldsymbol{\Pi_{V^-} q} - \boldsymbol{q}, \boldsymbol{\varepsilon^q}_h f_n \rangle_{\mathcal{F}_{\Omega}^n(t_{n})}
		  \\
		  &+ \llangle P^{f_n}_M v - v, \varepsilon^{\lambda}_h f_n \rrangle_{\partial \mathcal{E}_{\mathcal{S}}^n(t_{n+1})}   
		  - \llangle P^{f_{n-1}}_{M^-} v - v, \varepsilon^{\lambda}_h f_n \rrangle_{\partial \mathcal{E}_{\mathcal{S}}^n(t_{n})}.      
	  \end{split}
  \end{equation}
  Notice that
  \begin{equation}
	  \begin{split}
		  &\tfrac{1}{2}\langle f_n\boldsymbol{\varepsilon^q}_h, \boldsymbol{\varepsilon^q}_h \rangle_{\mathcal{F}_{\Omega}^n(t_{n})} - \langle \boldsymbol{\varepsilon^{q-}}_h, \boldsymbol{\varepsilon^q}_h f_n \rangle_{\mathcal{F}_{\Omega}^n(t_{n})} 
		  = \tfrac{1}{2}\langle \boldsymbol{\varepsilon^q}_h - \boldsymbol{\varepsilon^{q-}}_h, \boldsymbol{\varepsilon^q}_h f_n \rangle_{\mathcal{F}_{\Omega}^n(t_{n})} - \tfrac{1}{2} \langle \boldsymbol{\varepsilon^{q-}}_h, \boldsymbol{\varepsilon^q}_h f_n \rangle_{\mathcal{F}_{\Omega}^n(t_{n})}
		  \\
		  & = \tfrac{1}{2}\langle \boldsymbol{\varepsilon^q}_h - \boldsymbol{\varepsilon^{q-}}_h, \del[0]{\boldsymbol{\varepsilon^q}_h - \boldsymbol{\varepsilon^{q-}}_h} f_n \rangle_{\mathcal{F}_{\Omega}^n(t_{n})} - \tfrac{1}{2} \langle \boldsymbol{\varepsilon^{q-}}_h, \boldsymbol{\varepsilon^{q-}}_h f_n \rangle_{\mathcal{F}_{\Omega}^n(t_{n})}.
	  \end{split}
  \end{equation}
  Similarly,
  \begin{equation}
	  \begin{split}
		  &\tfrac{1}{2}\llangle f_n\varepsilon_h^{\lambda}, \varepsilon_h^{\lambda} \rrangle_{\partial \mathcal{E}_{\mathcal{S}}^n(t_{n})} - \llangle \varepsilon_h^{\lambda-}, \varepsilon^{\lambda}_h f_n \rrangle_{\partial \mathcal{E}_{\mathcal{S}}^n(t_{n})} 
		  \\
		  &= \tfrac{1}{2}\llangle \varepsilon_h^{\lambda} - \varepsilon_h^{\lambda-}, \del[0]{\varepsilon_h^{\lambda} - \varepsilon_h^{\lambda-}}f_n \rrangle_{\partial \mathcal{E}_{\mathcal{S}}^n(t_{n})} - \tfrac{1}{2}\llangle \varepsilon_h^{\lambda-}, \varepsilon^{\lambda-}_h f_n \rrangle_{\partial \mathcal{E}_{\mathcal{S}}^n(t_{n})}.
	  \end{split}
  \end{equation}
  Substituting these expressions in \cref{eq:test_equals_trial_error2} and rearranging terms, we obtain
  \begin{equation}
	  \label{eq:test_equals_trial_error3}
	  \begin{split}
		  &- \tfrac{1}{2}\del[1]{f'_n \boldsymbol{\varepsilon^q}_h, \boldsymbol{\varepsilon^q}_h}_{\mathcal{T}^n}
		  + \tfrac{1}{2}\langle f_n\boldsymbol{\varepsilon^q}_h, \boldsymbol{\varepsilon^q}_h \rangle_{\mathcal{F}_{\Omega}^n(t_{n+1})} 
		  + \tfrac{1}{2}\langle \boldsymbol{\varepsilon^q}_h - \boldsymbol{\varepsilon^{q-}}_h, \del[0]{\boldsymbol{\varepsilon^q}_h - \boldsymbol{\varepsilon^{q-}}_h} f_n \rangle_{\mathcal{F}_{\Omega}^n(t_{n})}
		  \\
		  &+ \langle \tau \del[1]{\varepsilon_h^v - \varepsilon^{\lambda}_h}, f_n \del[1]{\varepsilon_h^v - \varepsilon^{\lambda}_h} \rangle_{\mathcal{F}_{\mathcal{Q}}^n}
		  - \tfrac{1}{2}\langle f'_n\varepsilon_h^{\lambda}, \varepsilon_h^{\lambda} \rangle_{\mathcal{F}_{\mathcal{S}}^n}
		  + \tfrac{1}{2}\llangle f_n\varepsilon_h^{\lambda}, \varepsilon_h^{\lambda} \rrangle_{\partial \mathcal{E}_{\mathcal{S}}^n(t_{n+1})}
		  \\
		  &+ \tfrac{1}{2}\llangle \varepsilon_h^{\lambda} - \varepsilon_h^{\lambda-}, \del[0]{\varepsilon_h^{\lambda} - \varepsilon_h^{\lambda-}}f_n \rrangle_{\partial \mathcal{E}_{\mathcal{S}}^n(t_{n})}
		  \\
		  =&
                  \tfrac{1}{2} \langle \boldsymbol{\varepsilon^{q-}}_h, \boldsymbol{\varepsilon^{q-}}_h f_n \rangle_{\mathcal{F}_{\Omega}^n(t_{n})}                  
                  + \tfrac{1}{2}\llangle \varepsilon_h^{\lambda-}, \varepsilon^{\lambda-}_h f_n \rrangle_{\partial \mathcal{E}_{\mathcal{S}}^n(t_{n})}
		  \\
		  &- \del[1]{\boldsymbol{\Pi_V q} - \boldsymbol{q}, f_n \partial_t \boldsymbol{\varepsilon^q}_h}_{\mathcal{T}^n}
		  \\
		  &- \del[1]{\boldsymbol{\Pi_V q} - \boldsymbol{q}, \boldsymbol{\varepsilon^q}_h f'_n}_{\mathcal{T}^n}
		  + \langle \boldsymbol{\Pi_V q} - \boldsymbol{q}, \boldsymbol{\varepsilon^q}_h f_n \rangle_{\mathcal{F}_{\Omega}^n(t_{n+1})}
		  - \langle \boldsymbol{\Pi_{V^-} q} - \boldsymbol{q}, \boldsymbol{\varepsilon^q}_h f_n \rangle_{\mathcal{F}_{\Omega}^n(t_{n})}
		  \\
		  &+ \llangle P^{f_n}_M v - v, \varepsilon^{\lambda}_h f_n \rrangle_{\partial \mathcal{E}_{\mathcal{S}}^n(t_{n+1})}   
		  - \llangle P^{f_{n-1}}_{M^-} v - v, \varepsilon^{\lambda}_h f_n \rrangle_{\partial \mathcal{E}_{\mathcal{S}}^n(t_{n})}.      
	  \end{split}
  \end{equation}
  Recall that $f_n = e^{-\alpha(t-t_n)}$, where $\alpha > 0$ is
  constant, and so $f'_n = -\alpha f_n$. Moreover, $f_n(t_n) = 1$ and
  $f_n(t_{n+1}) = e^{-\alpha \Delta t}$. With these definitions and by
  the Cauchy--Schwarz inequality applied to the right hand side of
  \cref{eq:test_equals_trial_error3}, we obtain
  \begin{equation}
    \label{eq:norms_error}
    \begin{split}
      &\tfrac{\alpha}{2} \norm[0]{\boldsymbol{\varepsilon^q}_h}_{f_n, \mathcal{T}^n}^2
      + \tfrac{\alpha}{2} \norm[0]{\varepsilon^{\lambda}_h}_{f_n, \mathcal{F}_{\mathcal{S}}^n}^2
      + \tfrac{e^{-\alpha \Delta t}}{2} \norm[0]{\boldsymbol{\varepsilon^{q}}_h}^2_{\mathcal{F}_{\Omega}^n(t_{n+1})}
      + \tfrac{e^{-\alpha \Delta t}}{2} \norm[0]{\varepsilon^{\lambda}_h}^2_{\partial\mathcal{E}_{\mathcal{S}}^n(t_{n+1})}
      \\
      \leq
      & \tfrac{1}{2}\norm[0]{\boldsymbol{\varepsilon^{q-}}_h}^2_{\mathcal{F}_{\Omega}^n(t_n)}
      + \tfrac{1}{2}\norm[0]{\varepsilon^{\lambda-}_h}^2_{\partial\mathcal{E}_{\mathcal{S}}^n(t_n)}
      \\
      &+ \norm[0]{\boldsymbol{q} - \boldsymbol{\Pi_V q}}_{f_n, \mathcal{T}^n} \norm[0]{\partial_t \boldsymbol{\varepsilon^q}_h}_{f_n, \mathcal{T}^n}
      + \alpha \norm[0]{\boldsymbol{\Pi_V q} - \boldsymbol{q}}_{f_n, \mathcal{T}^n} \norm[0]{\boldsymbol{\varepsilon^q}_h}_{f_n, \mathcal{T}^n}
      \\
      &+ \norm[0]{\boldsymbol{\Pi_V q} - \boldsymbol{q}}_{f_n,\mathcal{F}_{\Omega}^n(t_{n+1})} \norm[0]{\boldsymbol{\varepsilon^q}_h}_{f_n,\mathcal{F}_{\Omega}^n(t_{n+1})}
      + \norm[0]{\boldsymbol{\Pi_{V^-} q} - \boldsymbol{q}}_{f_n,\mathcal{F}_{\Omega}^n(t_n)} \norm[0]{\boldsymbol{\varepsilon^q}_h}_{f_n,\mathcal{F}_{\Omega}^n(t_n)}
      \\
      &
      + \norm[0]{P^{f_n}_M v - v}_{f_n, \partial \mathcal{E}_{\mathcal{S}}^n(t_{n+1})} \norm[0]{\varepsilon_h^{\lambda}}_{f_n, \partial \mathcal{E}_{\mathcal{S}}^n(t_{n+1})}
      + \norm[0]{P^{f_{n-1}}_{M^-} v - v}_{f_n, \partial \mathcal{E}_{\mathcal{S}}^n(t_n)} \norm[0]{\varepsilon_h^{\lambda}}_{f_n, \partial \mathcal{E}_{\mathcal{S}}^n(t_n)}.
    \end{split}
  \end{equation}
  Note that, by \cref{eq:inv_trace_ineq_K0} and
  \cref{eq:inv_trace_ineq_F0},
  \begin{equation}
  	  \label{eq:byLemma44}
	  \begin{split}
	  \norm[0]{\boldsymbol{\varepsilon^q}_h}_{f_n,\mathcal{F}_{\Omega}^n(t_n)}
	  &\leq C \Delta t^{-1/2}\norm[0]{\boldsymbol{\varepsilon^q}_h}_{f_n,\mathcal{T}^n},
	  \\
	  \norm[0]{\varepsilon_h^{\lambda}}_{f_n, \partial \mathcal{E}_{\mathcal{S}}^n(t_n)}
	  &\leq C \Delta t^{-1/2}\norm[0]{\varepsilon_h^{\lambda}}_{f_n, \mathcal{F}_{\mathcal{S}}^n}.
	  \end{split}
  \end{equation}
  Furthermore, combining a standard inverse inequality and using
  equivalence of the norms $\norm{\cdot}_{\mathcal{K}}$ and
  $\norm{\cdot}_{f_n,\mathcal{K}}$ we also have
  \begin{equation}
    \label{eq:partialtepsinvineq}
    \norm[0]{\partial_t \boldsymbol{\varepsilon^q}_h}_{f_n, \mathcal{T}^n}
    \leq C \Delta t^{-1} \norm[0]{\boldsymbol{\varepsilon^q}_h}_{f_n, \mathcal{T}^n}.
  \end{equation}
  Using \cref{eq:byLemma44} and \cref{eq:partialtepsinvineq} for the
  right hand side of \cref{eq:norms_error} and multiplying by 2,
  \begin{equation}
    \label{eq:norms_error2}
    \begin{split}
      &\alpha \norm[0]{\boldsymbol{\varepsilon^q}_h}_{f_n, \mathcal{T}^n}^2
      + \alpha \norm[0]{\varepsilon^{\lambda}_h}_{f_n, \mathcal{F}_{\mathcal{S}}^n}^2
      +  e^{-\alpha \Delta t}\norm[0]{\boldsymbol{\varepsilon^{q}}_h}^2_{\mathcal{F}_{\Omega}^n(t_{n+1})}
      +  e^{-\alpha \Delta t}\norm[0]{\varepsilon^{\lambda}_h}^2_{\partial\mathcal{E}_{\mathcal{S}}^n(t_{n+1})}
      \\
      \leq
      & \norm[0]{\boldsymbol{\varepsilon^{q-}}_h}^2_{\mathcal{F}_{\Omega}^n(t_n)}
      + \norm[0]{\varepsilon^{\lambda-}_h}^2_{\partial\mathcal{E}_{\mathcal{S}}^n(t_n)}
      \\
      &+ C \Delta t^{-1}\norm[0]{\boldsymbol{q} - \boldsymbol{\Pi_V q}}_{f_n, \mathcal{T}^n} \norm[0]{\boldsymbol{\varepsilon^q}_h}_{f_n, \mathcal{T}^n}      
      + 2\alpha \norm[0]{\boldsymbol{\Pi_V q} - \boldsymbol{q}}_{f_n, \mathcal{T}^n} \norm[0]{\boldsymbol{\varepsilon^q}_h}_{f_n, \mathcal{T}^n}
      \\
      &+ C\Delta t^{-1/2}\norm[0]{\boldsymbol{\Pi_V q} - \boldsymbol{q}}_{f_n,\mathcal{F}_{\Omega}^n(t_{n+1})} \norm[0]{\boldsymbol{\varepsilon^q}_h}_{f_n, \mathcal{T}^n}
      \\ 
      &+ C\Delta t^{-1/2}\norm[0]{\boldsymbol{\Pi_{V^-} q} - \boldsymbol{q}}_{f_n,\mathcal{F}_{\Omega}^n(t_n)} \norm[0]{\boldsymbol{\varepsilon^q}_h}_{f_n, \mathcal{T}^n}
      \\
      &+ C \Delta t^{-1/2} \norm[0]{P^{f_n}_M v - v}_{f_n, \partial \mathcal{E}_{\mathcal{S}}^n(t_{n+1})} \norm[0]{\varepsilon^{\lambda}_h}_{f_n, \mathcal{F}_{\mathcal{S}}^n}
      \\
      &+ C \Delta t^{-1/2} \norm[0]{P^{f_{n-1}}_{M^-} v - v}_{f_n, \partial \mathcal{E}_{\mathcal{S}}^n(t_n)} \norm[0]{\varepsilon^{\lambda}_h}_{f_n, \mathcal{F}_{\mathcal{S}}^n}.
    \end{split}
  \end{equation}
  Applying Young's inequality to the right hand side of
  \cref{eq:norms_error2}, we obtain
  \begin{equation}
    \label{eq:norms_error3}
    \begin{split}
      & \alpha \norm[0]{\boldsymbol{\varepsilon^q}_h}_{f_n, \mathcal{T}^n}^2
      + \alpha \norm[0]{\varepsilon^{\lambda}_h}_{f_n, \mathcal{F}_{\mathcal{S}}^n}^2
      +  e^{-\alpha \Delta t} \norm[0]{\boldsymbol{\varepsilon^{q}}_h}^2_{\mathcal{F}_{\Omega}^n(t_{n+1})}
      +  e^{-\alpha \Delta t} \norm[0]{\varepsilon^{\lambda}_h}^2_{\partial\mathcal{E}_{\mathcal{S}}^n(t_{n+1})}
      \\
      \leq & 
      \norm[0]{\boldsymbol{\varepsilon^{q-}}_h}^2_{\mathcal{F}_{\Omega}^n(t_n)}
      + \norm[0]{\varepsilon^{\lambda-}_h}^2_{\partial\mathcal{E}_{\mathcal{S}}^n(t_n)}
      + \frac{C}{4\delta_1} \Delta t^{-2}\norm[0]{\boldsymbol{q} - \boldsymbol{\Pi_V q}}^2_{f_n, \mathcal{T}^n}
      + \delta_1 \norm[0]{\boldsymbol{\varepsilon^q}_h}^2_{f_n, \mathcal{T}^n}
      \\
      &+ \frac{C}{4\delta_1} \norm[0]{\boldsymbol{\Pi_V q} - \boldsymbol{q}}^2_{f_n, \mathcal{T}^n}
      + \delta_1 \norm[0]{\boldsymbol{\varepsilon^q}_h}^2_{f_n, \mathcal{T}^n}
      + \frac{C}{4\delta_1} \Delta t^{-1}\norm[0]{\boldsymbol{\Pi_V q} - \boldsymbol{q}}^2_{f_n,\mathcal{F}_{\Omega}^n(t_{n+1})}
      +  \delta_1 \norm[0]{\boldsymbol{\varepsilon^q}_h}^2_{f_n, \mathcal{T}^n}
      \\
      &+ \frac{C}{4\delta_1} \Delta t^{-1}\norm[0]{\boldsymbol{\Pi_{V^-} q} - \boldsymbol{q}}^2_{f_n,\mathcal{F}_{\Omega}^n(t_n)}
      +  \delta_1 \norm[0]{\boldsymbol{\varepsilon^q}_h}^2_{f_n, \mathcal{T}^n}
      \\
      &
      + \frac{C}{4\delta_2} \Delta t^{-1} \norm[0]{P^{f_n}_M v - v}^2_{f_n, \partial \mathcal{E}_{\mathcal{S}}^n(t_{n+1})}
      +  \delta_2 \norm[0]{\varepsilon^{\lambda}_h}^2_{f_n, \mathcal{F}_{\mathcal{S}}^n}
      \\
      &+ \frac{C}{4\delta_2} \Delta t^{-1} \norm[0]{P^{f_{n-1}}_{M^-} v - v}^2_{f_n, \partial \mathcal{E}_{\mathcal{S}}^n(t_n)}
      + \delta_2 \norm[0]{\varepsilon^{\lambda}_h}^2_{f_n, \mathcal{F}_{\mathcal{S}}^n},
    \end{split}
  \end{equation}
  where $\delta_1, \delta_2 > 0$ are free to choose
  constants. Collecting terms,
  \begin{equation}
    \label{eq:norms_error4}
    \begin{split}
      &(\alpha - 4\delta_1) \norm[0]{\boldsymbol{\varepsilon^q}_h}_{f_n, \mathcal{T}^n}^2
      + (\alpha - 2\delta_2) \norm[0]{\varepsilon^{\lambda}_h}_{f_n, \mathcal{F}_{\mathcal{S}}^n}^2
      +  e^{-\alpha \Delta t} \norm[0]{\boldsymbol{\varepsilon^{q}}_h}^2_{\mathcal{F}_{\Omega}^n(t_{n+1})}
      +  e^{-\alpha \Delta t} \norm[0]{\varepsilon^{\lambda}_h}^2_{\partial\mathcal{E}_{\mathcal{S}}^n(t_{n+1})}
      \\      
      \leq
      &  \norm[0]{\boldsymbol{\varepsilon^{q-}}_h}^2_{\mathcal{F}_{\Omega}^n(t_n)}
      +  \norm[0]{\varepsilon^{\lambda-}_h}^2_{\partial\mathcal{E}_{\mathcal{S}}^n(t_n)}
      + C \Delta t^{-2}\norm[0]{\boldsymbol{q} - \boldsymbol{\Pi_V q}}^2_{f_n, \mathcal{T}^n}
      \\
      &+ C \norm[0]{\boldsymbol{\Pi_V q} - \boldsymbol{q}}^2_{f_n, \mathcal{T}^n}
      + C \Delta t^{-1}\norm[0]{\boldsymbol{\Pi_V q} - \boldsymbol{q}}^2_{f_n,\mathcal{F}_{\Omega}^n(t_{n+1})}
      \\
      &+ C \Delta t^{-1}\norm[0]{\boldsymbol{\Pi_{V^-} q} - \boldsymbol{q}}^2_{f_n,\mathcal{F}_{\Omega}^n(t_n)}
      + C \Delta t^{-1} \norm[0]{P^{f_n}_M v - v}^2_{f_n, \partial \mathcal{E}_{\mathcal{S}}^n(t_{n+1})}
      \\
      &+ C \Delta t^{-1} \norm[0]{P^{f_{n-1}}_{M^-} v - v}^2_{f_n, \partial \mathcal{E}_{\mathcal{S}}^n(t_n)}.
    \end{split}
  \end{equation}
  The result follows  by choosing $\delta_1 = \alpha / 8$ and $\delta_2 = \alpha/4$.
\end{proof}

\begin{remark}
	If $(\boldsymbol{q}, v) \in \sbr[0]{H^{(s_t,
			s_s)}(\mathcal{E}^n)}^2
	\times H^{(s_t,s_s)}(\mathcal{E}^n)$, with $1 < s_t \leq p+1$ and $1 \leq s_s \leq p+1$, then combining \Cref{lem:bounds_epsilons}, \Cref{cor:weightedL2proj_bounds} and \Cref{thm:Piproj_bounds} gives the following leading order terms
	\begin{equation}
		\begin{split}
			\tfrac{\alpha}{2} \norm[0]{\boldsymbol{\varepsilon^q}_h}^2_{f_n, \mathcal{T}^n}
			&+ \tfrac{\alpha}{2} \norm[0]{\varepsilon^{\lambda}_h}^2_{f_n, \mathcal{F}_{\mathcal{S}}^n}
			+  e^{-\alpha \Delta t}\norm[0]{\boldsymbol{\varepsilon^{q}}_h}^2_{\mathcal{F}_{\Omega}^n(t_{n+1})}
			+  e^{-\alpha \Delta t}\norm[0]{\varepsilon^{\lambda}_h}^2_{\partial\mathcal{E}_{\mathcal{S}}^n(t_{n+1})}
			\\      
			\leq
			&  \norm[0]{\boldsymbol{\varepsilon^{q-}}_h}^2_{\mathcal{F}_{\Omega}^n(t_n)}
			+  \norm[0]{\varepsilon^{\lambda-}_h}^2_{\partial\mathcal{E}_{\mathcal{S}}^n(t_n)}
			\\
			& + C\del[1]{\Delta t^{-2}h^{2s_s} + \Delta t^{2s_t-2}} \norm{\boldsymbol{q}}^2_{H^{(s_t, s_s)}(\mathcal{E}^n)}
			\\
			& + C\del[1]{\Delta t^{-2}h^{2s_s} + \Delta t^{2s_t-2}} \norm{v}^2_{H^{(s_t, s_s)}(\mathcal{E}^n)} 
			\\
			& + C\del[1]{\Delta t^{-2}h^{2s_s} + \Delta t^{2s_t-2}}\norm{v}^2_{H^{(s_t, s_s)}(\partial\mathcal{E}_{\mathcal{S}}^n)},
		\end{split}
	\end{equation}
	where $h = \max_{\mathcal{K}} h_K$.
%	%
%	\begin{equation}
%		\begin{split}
%			C \norm[0]{\boldsymbol{\varepsilon^q}_h}_{f_n, \mathcal{T}^n}
%			&+ C \norm[0]{\varepsilon^{\lambda}_h}_{f_n, \mathcal{F}_{\mathcal{S}}^n}
%			+  e^{-\alpha \Delta t}\norm[0]{\boldsymbol{\varepsilon^{q}}_h}_{\mathcal{F}_{\Omega}^n(t_{n+1})}
%			+  e^{-\alpha \Delta t}\norm[0]{\varepsilon^{\lambda}_h}_{\partial\mathcal{E}_{\mathcal{S}}^n(t_{n+1})}
%			\\      
%			\leq
%			&  \norm[0]{\boldsymbol{\varepsilon^{q-}}_h}_{\mathcal{F}_{\Omega}^n(t_n)}
%			+  \norm[0]{\varepsilon^{\lambda-}_h}_{\partial\mathcal{E}_{\mathcal{S}}^n(t_n)}
%			\\
%			&+ C\del[1]{\Delta t^{-1}h_K^{s_s} + \Delta t^{s_t-1} + h_K^{1/2}\Delta t^{s_t-1} + h_K^{s_s} + \Delta t^{s_t} + h_K^{1/2}\Delta t^{s_t} + \Delta t^{-1/2}h_K^{s_s}}
%		\end{split}
%	\end{equation}
%	%
\end{remark}

To prove the \emph{a priori} error estimates we will use the
following lemma:

\begin{lemma}
	\label{lem:sum_error_bounds}
	Let $A_n, B_n, D_n \geq 0$ for all $n = 0,1, \cdots N-1$, and $\alpha > 0$. Moreover, assume that there exists a constant $C \geq 0$ such that $C\Delta t B_n \leq A_n$ for all $n$. If $B_0 = 0$, $(N-1)\Delta t = T$ and
	\begin{equation}
		\label{eq:eq_An_Bn}
		A_n + e^{-\alpha \Delta t}B_n \leq B_{n-1} + D_n,
	\end{equation}
	then,
	\begin{equation}
		A_n \leq C \sum_{i=1}^{n}D_i,
	\end{equation}
	where $C > 0$ depends on $\alpha$ and $T$.
	
%	%
%	\begin{equation}
%		\begin{split}
%			A_n &= C \norm[0]{\boldsymbol{\varepsilon^q}_h}^2_{f_{n},
%			\mathcal{T}^{n}} + C \norm[0]{\varepsilon^{\lambda}_h}^2_{f_{n},
%			\mathcal{F}_{\mathcal{S}}^{n}},
%			\\
%			B_{n} &=
%			\norm[0]{\boldsymbol{\varepsilon^{q}}_h}^2_{\mathcal{F}_{\Omega}^n(t_{n+1})}
%			+
%			\norm[0]{\varepsilon^{\lambda}_h}^2_{\partial\mathcal{E}_{\mathcal{S}}^n(t_{n+1})},
%		\end{split}
%	\end{equation}
%	%
%	and $D_n \leq 0$ for $n = 0, 1, \cdots, N-1$. If
%	%
%	\begin{equation}
%		\label{eq:eq_An_Bn}
%		A_n + e^{-\alpha \Delta t}B_n \leq B_{n-1} + D_n,
%	\end{equation}
%	%
%	then,
%	%
%	\begin{equation}
%		A_n \leq C\sum_{i=1}^{n}D_i,
%	\end{equation}
%	%
%	where $C$ depends on $\alpha$ and the final time $T$.
\end{lemma}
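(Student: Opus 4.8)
The plan is to read \eqref{eq:eq_An_Bn} as a one-step discrete Gr\"onwall inequality for the sequence $B_n$ and to iterate it in closed form, exploiting the identity $(N-1)\Delta t = T$ so that the exponential factor accumulated over all time slabs stays bounded independently of $\Delta t$. The nonnegativity of $A_n$, $B_n$, $D_n$ is what makes each term easy to discard when convenient, and I would use this repeatedly.

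First I would drop the nonnegative term $A_n$ from the left-hand side of \eqref{eq:eq_An_Bn}, which yields $e^{-\alpha\Delta t}B_n \le B_{n-1} + D_n$, i.e.\ the one-step estimate $B_n \le e^{\alpha\Delta t}\del{B_{n-1} + D_n}$. Starting from $B_0 = 0$ and iterating this bound, a short induction on $n$ gives the closed form
\begin{equation*}
  B_n \le \sum_{i=1}^{n} e^{(n-i+1)\alpha\Delta t} D_i .
\end{equation*}
Since $1 \le i \le n \le N-1$, every exponent satisfies $(n-i+1)\alpha\Delta t \le (N-1)\alpha\Delta t = \alpha T$, so each factor is bounded by $e^{\alpha T}$ and therefore $B_n \le e^{\alpha T}\sum_{i=1}^{n} D_i$. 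Finally, dropping instead the nonnegative term $e^{-\alpha\Delta t}B_n$ from \eqref{eq:eq_An_Bn} gives $A_n \le B_{n-1} + D_n$; substituting the bound just obtained for $B_{n-1}$ and using $e^{\alpha T} \ge 1$ yields $A_n \le e^{\alpha T}\sum_{i=1}^{n} D_i$, which is the claim with $C = e^{\alpha T}$ depending only on $\alpha$ and $T$.

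The step I expect to require the most care is the uniform control of the accumulated exponential: a naive iteration produces one factor $e^{\alpha\Delta t}$ per slab, and the number of slabs $N-1 = T/\Delta t$ blows up as $\Delta t \to 0$, so the argument would be useless without the observation that the total exponent telescopes to exactly $\alpha T$. This is precisely where the hypotheses $(N-1)\Delta t = T$ and $\alpha$ constant enter. I would also note that the remaining hypothesis $C\Delta t B_n \le A_n$ --- which in the application of this lemma follows from the inverse trace inequality \cref{eq:inv_trace_ineq_K0} --- is not actually needed for the direct iteration above; it would instead be the ingredient enabling an alternative telescoping-sum proof (summing \eqref{eq:eq_An_Bn} over $n$ and controlling the resulting $\del{1-e^{-\alpha\Delta t}}\sum B_n$ term, whose coefficient is $O(\Delta t)$, by $\sum A_n$). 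Since the closed-form iteration is cleaner and self-contained, I would present that and merely remark on the telescoping alternative.
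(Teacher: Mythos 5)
Your proof is correct, and at its core it is the same discrete Gr\"onwall iteration the paper uses: obtain a one-step recursion for $B_n$, iterate from $B_0=0$, control the accumulated exponential factor via $(N-1)\Delta t = T$, and finish with $A_n \le B_{n-1} + D_n$. The difference lies in how the recursion coefficient is formed. The paper keeps $A_n$ on the left-hand side by invoking the auxiliary hypothesis $C\Delta t\, B_n \le A_n$, arriving at $\bigl(C\Delta t + e^{-\alpha\Delta t}\bigr)B_n \le B_{n-1} + D_n$; it then sets $\gamma = C\Delta t + e^{-\alpha\Delta t}$, iterates to get $B_n \le \sum_{i=1}^{n}\gamma^{i-n-1}D_i$, and needs a two-case analysis ($\gamma \ge 1$ versus $\gamma < 1$) before bounding $\gamma^{-n} \le e^{\alpha n\Delta t} \le e^{\alpha T}$. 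You instead discard $A_n \ge 0$ outright, iterate with the factor $e^{\alpha\Delta t}$, and bound every exponent $(n-i+1)\alpha\Delta t \le \alpha T$ directly. Your route is cleaner: it avoids the case split entirely and makes explicit that the hypothesis $C\Delta t\, B_n \le A_n$ is redundant for this lemma --- a genuine observation, since as stated (with $C \ge 0$ allowed) that hypothesis is vacuous anyway, being satisfied by $C=0$; in the paper it only serves to enlarge $\gamma$ and hence tighten a bound that is already sufficient without it. Both arguments deliver the same constant, essentially $C = e^{\alpha T}$, depending only on $\alpha$ and $T$.
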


\begin{proof}
	Since $C \Delta t B_n \leq A_n$, by \cref{eq:eq_An_Bn}, we have
	\begin{equation}
	\label{eq:eq_An_Bn2}
	\del[1]{C\Delta t + e^{-\alpha \Delta t}}B_n \leq B_{n-1} + D_n.
	\end{equation}
	%
%	By \cref{lem:inv_trace_ineq}, we have
%	%
%	\begin{equation}
%		C \Delta t B_n \leq \norm[0]{\boldsymbol{\varepsilon^{q}}_h}^2_{\mathcal{T}^n(t_{n+1})}
%		+
%		\norm[0]{\varepsilon^{\lambda}_h}^2_{\mathcal{F}_{\mathcal{S}}^n} \leq A_n,
%	\end{equation}
%	%
%	where the last inequality is by equivalence of norms in finite dimensional spaces. Then, by \cref{eq:eq_An_Bn}, we have
%	%
%	\begin{equation}
%		\label{eq:eq_An_Bn2}
%		\del[1]{C\Delta t + e^{-\alpha \Delta t}}B_n \leq B_{n-1} + D_n.
%	\end{equation}
%	%
	Let $\gamma = C\Delta t + e^{-\alpha \Delta t}$. By induction, we have the following
	\begin{equation}
		\label{eq:induction_Bn}
		\begin{split}
			B_n & \leq \gamma^{-1} B_{n-1} + \gamma^{-1}D_n
			\\
			& \leq \gamma^{-2} B_{n-2} + \gamma^{-2}D_{n-1} + \gamma^{-1}D_n
			\\
			& \leq \gamma^{-3} B_{n-3} + \gamma^{-3} D_{n-2} + \gamma^{-2}D_{n-1} + \gamma^{-1}D_n
			\\
			& \,\,\,\vdots
			\\
			& \leq \sum_{i = 1}^{n}\gamma^{i-n-1}D_i,
		\end{split}
	\end{equation}
	where we used that $B_0 = 0$. Note that $-n \leq i-n-1 \leq
        -1$, $i \in \cbr{1,\hdots,n}$. This implies that if
        $\gamma \geq 1$ then $\gamma^{i-n-1} \leq \gamma^{-1} \leq 1$
        while if $\gamma < 1$ then $\gamma^{i-n-1} \leq
        \gamma^{-n}$. We next bound
        $\gamma^{-n}$. Note that
	\begin{equation}
          e^{-\alpha \Delta t} \leq C_1 \Delta t + e^{-\alpha \Delta t} = \gamma.
	\end{equation}
	Since $n \leq N-1$, we have that
        $\gamma^{-n} \leq e^{\alpha n \Delta t} \leq e^{\alpha T} \leq
        C_2$. Therefore $\gamma^{i-n-1} \le \max\cbr{1, C_2} = C$.
        
        We may therefore bound $B_n$ in \cref{eq:induction_Bn} as:
	\begin{equation}
          \label{eq:B_n_bound}
          B_n \leq C \sum_{i=1}^{n}D_i.
	\end{equation}
	In order to obtain the final result, recall that, by
        \cref{eq:eq_An_Bn}, we have the following:
	\begin{equation}
          A_n \leq B_{n-1} + D_n.
	\end{equation}
	Using \cref{eq:B_n_bound} we obtain
	\begin{equation}
          A_n \leq C \sum_{i = 1}^{n-1}D_i + D_n
          \leq C \sum_{i=1}^{n}D_i,
	\end{equation}
	which completes the proof.
\end{proof}

The following theorem gives the final error bounds.

\begin{theorem}
  \label{thm:final_error_bounds}
  Let $h = \max_{\mathcal{K} \in \mathcal{T}^n} h_K$. Assume that the
  spatial shape-regularity condition \cref{eq:shape_reg} holds and
  that the triangulation $\mathcal{T}^n $does not have any hanging
  nodes. Suppose that $(\boldsymbol{q}, v)$ solves
  \cref{eq:mixedformlinfreesurface}, with
  $\boldsymbol{q} \in \sbr[0]{H^{(s_t,s_s)}(\mathcal{E}^n)}^2$ and
  $v \in H^{(s_t,s_s)}(\mathcal{E}^n)$, with $1/2 < s_t \leq p+1$ and
  $1 \leq s_s \leq p+1$. Then we have the following estimates:
  \begin{equation}
	  \begin{split}
		  \sum_{n=0}^{N-1} &\norm[1]{\boldsymbol{q} - \boldsymbol{q}_h}_{f_n, \mathcal{T}^n} + \sum_{n=0}^{N-1} \norm[1]{v - \lambda_h}_{f_n, \mathcal{F}_{\mathcal{S}}^n}
		  \\
		  & \le
		  C\del[1]{\Delta t^{-1}h^{s_s} + \Delta t^{s_t-1}} \norm{\boldsymbol{q}}_{H^{(s_t, s_s)}(\mathcal{E}^n)}
		  \\
		  & + C\del[1]{\Delta t^{-1}h^{s_s} + \Delta t^{s_t-1}} \norm{v}_{H^{(s_t, s_s)}(\mathcal{E}^n)} 
		  \\
		  & + C\del[1]{\Delta t^{-1}h^{s_s} + \Delta t^{s_t-1}}\norm{v}_{H^{(s_t, s_s)}(\partial\mathcal{E}_{\mathcal{S}}^n)},
	  \end{split}
  \end{equation}
  where $C > 0$ depends on $\alpha$ and the final time $T$.
%  % 
%  \begin{equation}
%    \begin{split}
%      \sum_{n=0}^{N-1} \norm[1]{\boldsymbol{q} - \boldsymbol{q}_h}&_{f_n, \mathcal{T}^n} + \sum_{n=0}^{N-1} \norm[1]{v - \lambda_h}_{f_n, \mathcal{F}_{\mathcal{S}}^n} 
%      \\
%      \le & 
%      C (h^{s_s} + \Delta t^{s_t} + h^{1/2}\Delta t^{s_t} +
%      	\Delta t^{-2}h_K^{s_s} + \Delta t^{s_t-2} + h_K^{1/2}\Delta t^{s_t-2} 
%      	\\
%      	&+ \Delta t^{-1}h_K^{s_s} + \Delta t^{s_t-1} + h_K^{1/2}\Delta t^{s_t-1} + \Delta t^{-3/2}h_K^{s_s})
%    \end{split}
%  \end{equation}
\end{theorem}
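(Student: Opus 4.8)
The plan is to reduce the theorem to a discrete Gronwall-type recursion over the space-time slabs and then invoke \Cref{lem:sum_error_bounds}. I would start from the per-slab energy estimate already assembled in \Cref{lem:bounds_epsilons} and insert into its right-hand side the approximation estimates of \Cref{thm:Piproj_bounds} and \Cref{cor:weightedL2proj_bounds}; this is exactly the computation recorded in the Remark following \Cref{lem:bounds_epsilons}. It produces, on each slab $\mathcal{E}^n$, an inequality of the form $A_n + e^{-\alpha\Delta t}B_n \le B_{n-1} + D_n$, where I set $A_n = \tfrac{\alpha}{2}\|\boldsymbol{\varepsilon^q}_h\|_{f_n,\mathcal{T}^n}^2 + \tfrac{\alpha}{2}\|\varepsilon_h^{\lambda}\|_{f_n,\mathcal{F}_{\mathcal{S}}^n}^2$, let $B_n = \|\boldsymbol{\varepsilon^q}_h\|^2_{\mathcal{F}_{\Omega}^n(t_{n+1})} + \|\varepsilon_h^{\lambda}\|^2_{\partial\mathcal{E}_{\mathcal{S}}^n(t_{n+1})}$ denote the outgoing time-interface energy, and collect all approximation-error contributions (each carrying the factor $\Delta t^{-2}h^{2s_s}+\Delta t^{2s_t-2}$ against the relevant $H^{(s_t,s_s)}$ data) into $D_n$.

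The key structural observations are that the incoming term $B_{n-1}$ of slab $n$ is precisely the outgoing term of slab $n-1$, since $\boldsymbol{\varepsilon^{q-}}_h$ and $\varepsilon_h^{\lambda-}$ at $t_n$ coincide with the traces of $\boldsymbol{\varepsilon^q}_h$ and $\varepsilon_h^{\lambda}$ carried over from the previous slab, and that $B_0=0$ because the initial errors vanish. To apply \Cref{lem:sum_error_bounds} I must verify its hypothesis $C\Delta t\,B_n \le A_n$. This is where the inverse trace inequalities \cref{eq:inv_trace_ineq_K0} and \cref{eq:inv_trace_ineq_F0} of \Cref{lem:inv_trace_ineq} enter: they give $\|\boldsymbol{\varepsilon^q}_h\|^2_{\mathcal{F}_{\Omega}^n(t_{n+1})}\le C\Delta t^{-1}\|\boldsymbol{\varepsilon^q}_h\|^2_{\mathcal{T}^n}$ and the analogous bound for $\varepsilon_h^{\lambda}$ on the free surface, which, after using the uniform equivalence of $\|\cdot\|$ and $\|\cdot\|_{f_n}$ on each slab, rearrange exactly into $C\Delta t\,B_n\le A_n$. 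With every hypothesis in place, \Cref{lem:sum_error_bounds} yields $A_n \le C\sum_{i=1}^n D_i$ with $C$ depending only on $\alpha$ and $T$.

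From this bound on $A_n$ I would recover the stated estimate in three steps. First, summing the $D_i$ over the slabs collapses the slabwise data into the full $H^{(s_t,s_s)}$ norms on the right-hand side, so the accumulated factor is $\Delta t^{-2}h^{2s_s}+\Delta t^{2s_t-2}$. Second, taking square roots and using $\sqrt{a^2+b^2}\le a+b$ converts this into the advertised order $\Delta t^{-1}h^{s_s}+\Delta t^{s_t-1}$ and controls $\|\boldsymbol{\varepsilon^q}_h\|_{f_n,\mathcal{T}^n}$ and $\|\varepsilon_h^{\lambda}\|_{f_n,\mathcal{F}_{\mathcal{S}}^n}$. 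Third, the triangle inequalities $\|\boldsymbol{q}-\boldsymbol{q}_h\|_{f_n,\mathcal{T}^n}\le\|\boldsymbol{q}-\boldsymbol{\Pi_V q}\|_{f_n,\mathcal{T}^n}+\|\boldsymbol{\varepsilon^q}_h\|_{f_n,\mathcal{T}^n}$ and $\|v-\lambda_h\|_{f_n,\mathcal{F}_{\mathcal{S}}^n}\le\|v-P^{f_n}_M v\|_{f_n,\mathcal{F}_{\mathcal{S}}^n}+\|\varepsilon_h^{\lambda}\|_{f_n,\mathcal{F}_{\mathcal{S}}^n}$ reduce the genuine errors to projection errors already controlled, at the same order, by \Cref{thm:Piproj_bounds} and by \cref{eq:fnL2_M} of \Cref{cor:weightedL2proj_bounds}; collecting these contributions over the slabs gives the claimed bound.

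I expect the main obstacle to be the bookkeeping that makes \Cref{lem:sum_error_bounds} applicable: matching the outgoing interface energy of slab $n-1$ with the incoming data of slab $n$ so that the recursion genuinely telescopes, and, above all, establishing the hypothesis $C\Delta t\,B_n\le A_n$. The latter is the crux, since it is precisely the inverse-trace control of a temporal-face trace by the slab volume, with the sharp $\Delta t^{-1}$ weight, that lets the dissipation generated by the weight $f_n$ absorb the interface energy and close the induction; lining up these $\Delta t$ powers is what ultimately produces the explicit, separated dependence on $\Delta t$ and $h$ in the final estimate.
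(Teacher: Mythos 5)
Your proposal follows essentially the same route as the paper's own proof: the same choice of $A_n$, $B_n$, $D_n$ (with $D_n$ taken from the remark following \Cref{lem:bounds_epsilons}), the same verification of the hypothesis $C\Delta t\, B_n \le A_n$ via \Cref{lem:inv_trace_ineq} and the equivalence of the weighted and unweighted norms, the same application of \Cref{lem:sum_error_bounds}, and the same concluding triangle-inequality step using \Cref{thm:Piproj_bounds} and \Cref{cor:weightedL2proj_bounds}. This is correct and complete at the same level of detail as the paper.
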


\begin{proof}
	Let
	\begin{equation}
		\begin{split}
			A_n &= \tfrac{\alpha}{2} 
				\norm[0]{\boldsymbol{\varepsilon^q}_h}^2_{f_{n},
				\mathcal{T}^{n}} + \tfrac{\alpha}{2} \norm[0]{\varepsilon^{\lambda}_h}^2_{f_{n},
				\mathcal{F}_{\mathcal{S}}^{n}},
			\\
			B_{n} &=
			\norm[0]{\boldsymbol{\varepsilon^{q}}_h}^2_{\mathcal{F}_{\Omega}^n(t_{n+1})}
			+
			\norm[0]{\varepsilon^{\lambda}_h}^2_{\partial\mathcal{E}_{\mathcal{S}}^n(t_{n+1})}.
		\end{split}
	\end{equation}
	Note that
	\begin{equation}
		B_{n-1} =
		\norm[0]{\boldsymbol{\varepsilon^{q}}_h}^2_{\mathcal{F}_{\Omega}^{n-1}(t_{n})}
		+
		\norm[0]{\varepsilon^{\lambda}_h}^2_{\partial\mathcal{E}_{\mathcal{S}}^{n-1}(t_{n})} = \norm[0]{\boldsymbol{\varepsilon^{q-}}_h}^2_{\mathcal{F}_{\Omega}^n(t_{n})}
		+
		\norm[0]{\varepsilon^{\lambda-}_h}^2_{\partial\mathcal{E}_{\mathcal{S}}^n(t_{n})}.
	\end{equation}
	By \cref{eq:inv_trace_ineq_K0} and the equivalence of the norms
	$\norm{\cdot}_{\mathcal{K}}$ and $\norm{\cdot}_{f_n,\mathcal{K}}$, we have the following:
	\begin{equation}
		C \Delta t B_n \leq \norm[0]{\boldsymbol{\varepsilon^{q}}_h}^2_{\mathcal{T}^n}
		+
		\norm[0]{\varepsilon^{\lambda}_h}^2_{\mathcal{F}_{\mathcal{S}}^n} \leq C A_n.
	\end{equation}
	Moreover, let $D_n$ be defined as follows:
	\begin{equation}
		\begin{split}
			D_n = & C\del[1]{\Delta t^{-2}h^{2s_s} + \Delta t^{2s_t-2}} \norm{\boldsymbol{q}}^2_{H^{(s_t, s_s)}(\mathcal{E}^n)}
			\\
			& + C\del[1]{\Delta t^{-2}h^{2s_s} + \Delta t^{2s_t-2}} \norm{v}^2_{H^{(s_t, s_s)}(\mathcal{E}^n)} 
			\\
			& + C\del[1]{\Delta t^{-2}h^{2s_s} + \Delta t^{2s_t-2}}\norm{v}^2_{H^{(s_t, s_s)}(\partial\mathcal{E}_{\mathcal{S}}^n)}.
		\end{split}
	\end{equation}
	With these definitions, \Cref{lem:sum_error_bounds} gives the following bound for the projection errors:
	\begin{equation}
		\begin{split}
			\tfrac{\alpha}{2} & \norm[0]{\boldsymbol{\varepsilon^q}_h}^2_{f_n, \mathcal{T}^n}
			+ \tfrac{\alpha}{2} \norm[0]{\varepsilon^{\lambda}_h}^2_{f_n, \mathcal{F}_{\mathcal{S}}^n}
			\\
			& \leq C \del[1]{\Delta t^{-2}h^{2s_s} + \Delta t^{2s_t-2}} \sum_{k=0}^{n}\norm{\boldsymbol{q}}^2_{H^{(s_t, s_s)}(\mathcal{E}^k)}
			\\
			& + C \del[1]{\Delta t^{-2}h^{2s_s} + \Delta t^{2s_t-2}} \sum_{k=0}^{n} \norm{v}^2_{H^{(s_t, s_s)}(\mathcal{E}^k)} 
			\\
			& + C \del[1]{\Delta t^{-2}h^{2s_s} + \Delta t^{2s_t-2}} \sum_{k=0}^{n} \norm{v}^2_{H^{(s_t, s_s)}(\partial\mathcal{E}_{\mathcal{S}}^k)}.
		\end{split}
	\end{equation}
	Summing over all time slabs, we obtain:
	\begin{equation}
		\label{eq:final_bound_proj_errors}
		\begin{split}
			\tfrac{\alpha}{2}& \sum_{n=0}^{N-1}\norm[0]{\boldsymbol{\varepsilon^q}_h}^2_{f_n, \mathcal{T}^n}
			+ \tfrac{\alpha}{2} \sum_{n=0}^{N-1}\norm[0]{\varepsilon^{\lambda}_h}^2_{f_n, \mathcal{F}_{\mathcal{S}}^n}
			\\
			&\leq C(N-1)\del[1]{\Delta t^{-2}h^{2s_s} + \Delta t^{2s_t-2}} \norm{\boldsymbol{q}}^2_{H^{(s_t, s_s)}(\mathcal{E})}
			\\
			& + C (N-1) \del[1]{\Delta t^{-2}h^{2s_s} + \Delta t^{2s_t-2}}  \norm{v}^2_{H^{(s_t, s_s)}(\mathcal{E})} 
			\\
			& + C (N-1) \del[1]{\Delta t^{-2}h^{2s_s} + \Delta t^{2s_t-2}} \norm{v}^2_{H^{(s_t, s_s)}(\partial\mathcal{E}_{\mathcal{S}})}.
		\end{split}
	\end{equation}
  By the triangle inequality note that
  \begin{equation}
    \begin{split}
      \sum_{n=0}^{N-1} \norm{\boldsymbol{q} - \boldsymbol{q}_h}_{f_n, \mathcal{T}^n}
      &\le \sum_{n=0}^{N-1} \norm{\boldsymbol{q} -\boldsymbol{\Pi _V q}}_{f_n, \mathcal{T}^n} + \sum_{n=0}^{N-1} \norm[0]{\boldsymbol{\varepsilon^q}_h}_{f_n, \mathcal{T}^n},
      \\
      \sum_{n=0}^{N-1} \norm{v - \lambda_h}_{f_n, \mathcal{F}_{\mathcal{S}}^n}
      &\le \sum_{n=0}^{N-1} \norm[0]{v -P_M^{f_n} v}_{f_n, \mathcal{F}_{\mathcal{S}}^n} + \sum_{n=0}^{N-1} \norm[0]{\varepsilon_h^{\lambda}}_{f_n, \mathcal{F}_{\mathcal{S}}^n}.      
    \end{split}
  \end{equation}
  The result follows by \Cref{thm:Piproj_bounds} and
  \cref{eq:final_bound_proj_errors}.
\end{proof}

%Note that the error bounds obtained in \cref{thm:final_error_bounds}
%depend on the errors of the previous time slab, i.e.,
%$\norm[0]{\boldsymbol{\varepsilon^{q-}}_h}_{f_n,\mathcal{F}_{\Omega}^n(t_n)}$
%and
%$\norm[0]{\varepsilon^{\lambda-}_h}_{f_n,\partial\mathcal{E}_{\mathcal{S}}^n(t_n)}$. For
%the first time slab these errors are the $f_n$ weighted
%$L^2$-projection bounds obtained in
%\cref{cor:weightedL2proj_bounds}. For the following time slabs, the
%errors can be obtained by induction.

\begin{remark}
  \label{rem:leadingorderterms}
  Assuming
  $\boldsymbol{q} \in \sbr[0]{H^{(p+1, p+1)}(\mathcal{E})}^2$ and
  $v \in H^{(p+1, p+1)}(\mathcal{E})$, the error estimates in
  \Cref{thm:final_error_bounds} give the following leading order
  terms:
  \begin{equation}
    \sum_{n=0}^{N-1} \norm[0]{\boldsymbol{q} - \boldsymbol{q}_h}_{f_n, \mathcal{T}^n}
    + \sum_{n=0}^{N-1} \norm[0]{v - \lambda_h}_{f_n, \mathcal{F}_{\mathcal{S}}^n}
    \leq
    C \del[1]{\Delta t^{-1}h^{p+1} + \Delta t^{p}}.
  \end{equation}
  %
%  This result requires $p>1$ to show that the space-time HDG method
%  \cref{eq:HDGProblem} converges. We remark that this result is not
%  tight. Indeed, numerical examples in \cref{sec:numerical_results}
%  show convergence also for the case $p=1$.
\end{remark}

%------------------------------------------------------------------------------
\section{Numerical results}
\label{sec:numerical_results}

In this section we verify the theoretical results of the previous
sections. The space-time HDG method for the linear free-surface
problem \cref{eq:mixedformlinfreesurface} is implemented using the
modular finite-element method (MFEM) library \cite{mfem-library}. The
linear systems of algebraic equations are solved by the direct solver
MUMPS \cite{ MUMPS:2, MUMPS:1} through PETSc \cite{petsc-user-ref,
  petsc-efficient}.

To obtain the space-time mesh, we first triangulate the spatial domain
$\Omega$. We then extrude the spatial triangles in the time direction
to obtain space-time prisms.

In all our simulations we take $\tau=5$ and $\alpha=0.1$.

%------------------------------------------------------------------------------
\subsection{Linear waves in an unbounded domain}
\label{ss:linearwavesunboundedD}

We consider the time harmonic linear free-surface waves example in an
unbounded domain from \cite[Section 8]{Vegt:2005}. We consider the
domain $\Omega = [-1,\,1] \times [-1,\,0]$ and apply periodic boundary
conditions at $x_1 = -1$ and $x_1 = 1$. The analytical solution to
this problem is given by
\begin{subequations}
  \begin{align}
    \phi(\boldsymbol{x}, t) &= \phi_0\cosh(k(x_2 + 1))\cos(\omega t - k x_1),
    \\
    \zeta(x_1, t) &= -\partial_t\phi(x_1, 0, t) = \phi_0 \omega \cosh(k) 
                    \sin(\omega t - k x_1),
  \end{align}
\end{subequations}
where $\phi_0$ denotes the amplitude of the velocity potential, $k$ is
the wave number which is related to the wavelength $\lambda_w$ by
$k=2\pi/\lambda_w$, and $\omega$ is the frequency of the oscillations
which satisfies the dispersion relation $\omega^2 = k\tanh(k)$.

We take $\lambda_w = 1$ and $\phi_0$ such that the maximum amplitude
of the wave height is 0.05. In \cref{tab:ConvRates_space_prisms},
\cref{tab:ConvRates_time_prisms}, and \cref{tab:ConvRates_spacetime_prisms} we show the
approximation errors and convergence rates for the velocity
$\boldsymbol{q}_h$ on the entire space-time domain $\mathcal{E}$ and
for the free-surface height $\lambda_h$ on the entire free-surface
boundary $\partial \mathcal{E}_{\mathcal{S}}$. We test convergence in
space, in time, and in space-time separately.

We first test convergence in space. To ensure the spatial error
dominates over the temporal error we take a small time step
$\Delta t = 10^{-5}$ when $p=1$ and $\Delta t = 10^{-4}$ when
$p=2$. We compute the error after 200 (when $p=1$) or 20 (when $p=2$)
time steps. As observed in \cref{tab:ConvRates_space_prisms}, the
error is of order $\mathcal{O}(h^{p+1})$.

We next consider convergence in time. For this we compute up to a
final time $T=1$. To ensure that the temporal error dominates over the
spatial error we use a mesh consisting of 73728 elements when $p=1$
and 36864 elements when $p=2$. We observe in
\cref{tab:ConvRates_time_prisms} that the error is of order
$\mathcal{O}(\Delta t^{p+1})$.

We note that the rates of convergence in space and time separately are
better than predicted from \cref{rem:leadingorderterms}. We now
consider convergence in space-time in which we refine the spatial mesh
and time step simultaneously. We compute the solution up to a final
time of $T = 1$. The initial time step is $\Delta t = 0.25$ and the
initial mesh has 18 elements. We observe in
\cref{tab:ConvRates_spacetime_prisms} that the error is of order
$\mathcal{O}(\Delta t^p + h^p)$, as expected from our analysis, see
\cref{rem:leadingorderterms}.

Finally, we consider a case where $h$ is fixed so that the spatial
mesh consists of 1152 triangles and the number of global
degrees-of-freedom is 13920. We take $p = 1$ and solve the problem up
to a final time $T = 1$. From \cref{rem:leadingorderterms}, we see
that if $h$ is fixed, eventually the dominant term in the error will
be of order $\mathcal{O}(\Delta t^{-1}h^{p+1})$, which results in
divergence of the solution. This effect can be observed in
\cref{tab:ConvRates_divergence} where the errors start to increase
after three levels of refinement in time. Unlike standard time
stepping methods, for space-time methods $\Delta t$ has to be chosen
carefully depending on its relation to the spatial mesh size $h$.

\begin{center}
  \begin{table}[tbp]
    \centering
    \small
    \caption{Spatial rates of convergence for linear waves in an
      unbounded domain, see \cref{ss:linearwavesunboundedD}.}
    \begin{tabular}{lcccccccc}
      \toprule
      \multirow{2}{*}{}
      & & \multicolumn{2}{c}{$\boldsymbol{q}_h$}  & \multicolumn{2}{c}{$\lambda_h$}\\
       & {DOFs}
       & {$L^2(\mathcal{E})$-error} & {Order}
       & {$L^2(\partial \mathcal{E}_{\mathcal{S}})$-error} & {Order}\\
      \midrule
      & {228}   & {1.1e-3} & {-} & {2.5e-2} & {-}\\
      $p = 1$ & {888}  & {3.2e-4} & {1.7} & {1.4e-2} & {0.9}\\
      & {3504}  & {8.5e-5} & {1.9}& {3.4e-3} & {2.0}\\
      & {13920} & {2.2e-5} & {2.0} & {8.2e-4} & {2.1}\\
      & {55488} & {5.4e-6} & {2.0} & {1.9e-4} & {2.1}\\
      \\
      & {486}    & {4.0e-4} & {-} & {1.5e-3} & {-}\\
      $p = 2$ & {1890}   & {6.0e-5} & {2.7} & {2.3e-4} & {2.8}\\
      & {7452}   & {7.9e-6} & {2.9} & {3.5e-5} & {2.7}\\
      & {29592}  & {1.0e-6} & {3.0} & {4.8e-6} & {2.9}\\
      \bottomrule        
    \end{tabular}
    \label{tab:ConvRates_space_prisms}
  \end{table}
\end{center}

\begin{center}
  \begin{table}[tbp]
    \centering
    \small
    \caption{Time rates of convergence for linear waves in an
      unbounded domain, see \cref{ss:linearwavesunboundedD}.}
    \begin{tabular}{lccccc}
      \toprule
      \multirow{2}{*}{}
      & & \multicolumn{2}{c}{$\boldsymbol{q}_h$}  & \multicolumn{2}{c}{$\lambda_h$}\\
      & {$\Delta t$} & {$L^2(\mathcal{E})$-error} & {Order} & {$L^2(\partial \mathcal{E}_{\mathcal{S}})$-error} & {Order}\\
      
      \midrule
      & 1 & {1.7e-2} & {-} & {1.7e-2} & {-}\\
      $p = 1$ & 1/2 & {5.1e-3} & {1.8} & {5.1e-3} & {1.8}\\
      & 1/4 & {1.2e-3} & {2.1}& {1.2e-3} & {2.1}\\
      & 1/8 & {3.0e-4} & {2.0} & {3.0e-4} & {2.0}\\
      & 1/16 & {8.2e-5} & {1.9} & {7.9e-5} & {1.9}\\
      \\
      & 1 & {3.8e-3} & {-} & {3.8e-3} & {-}\\
      $p = 2$ & 1/2 & {4.8e-4} & {3.0} & {4.8e-4} & {3.0}\\
      & 1/4 &  {5.9e-5} & {3.0}& {5.9e-5} & {3.0}\\
      & 1/8 & {7.5e-6} & {3.0} & {7.5e-6} & {3.0}\\
      & 1/16 & {1.6e-6} & {2.3} & {1.3e-6} & {2.5}\\
      \bottomrule        
    \end{tabular}
    \label{tab:ConvRates_time_prisms}
  \end{table}
\end{center}

\begin{center}
  \begin{table}[tbp]
    \centering
    \small
    \caption{Space-time rates of convergence for linear waves in an
      unbounded domain, see \cref{ss:linearwavesunboundedD}.}
    \begin{tabular}{lcccccccc}
      \toprule
      \multirow{2}{*}{}
      & & \multicolumn{2}{c}{$\boldsymbol{q}_h$}  & \multicolumn{2}{c}{$\lambda_h$}\\
      & {DOFs}
        & {$L^2(\mathcal{E})$-error} & {Order}
      & {$L^2(\partial \mathcal{E}_{\mathcal{S}})$-error} & {Order}\\
      \midrule
      & {228}   & {3.5e-2} & {-} & {3.4e-2} & {-}\\
      $p = 1$ & {888}  & {1.7e-2} & {1.1} & {1.5e-2} & {1.2}\\
      & {3504}  & {7.2e-3} & {1.2}& {5.9e-3} & {1.3}\\
      & {13920} & {3.2e-3} & {1.2} & {2.7e-3} & {1.2}\\
      & {55488} & {1.5e-3} & {1.1} & {1.3e-3} & {1.1}\\
      \\
      & {486}    & {1.6e-2} & {-} & {1.3e-2} & {-}\\
      $p = 2$ & {1890}   & {3.4e-3} & {2.2} & {2.3e-3} & {2.5}\\
      & {7452}   & {6.7e-4} & {2.4} & {4.4e-4} & {2.4}\\
      & {29592}  & {1.4e-4} & {2.3} & {9.8e-5} & {2.2}\\
      & {117936} & {3.1e-5} & {2.2} & {2.4e-5} & {2.1}\\
      \bottomrule        
    \end{tabular}
    \label{tab:ConvRates_spacetime_prisms}
  \end{table}
\end{center}

\begin{center}
	\begin{table}[tbp]
		\centering
		\small
		\caption{Time rates of convergence for a coarse mesh for linear waves in an
			unbounded domain, see \cref{ss:linearwavesunboundedD}.}
		\begin{tabular}{ccccc}
			\toprule
			\multirow{2}{*}{}
			& \multicolumn{2}{c}{$\boldsymbol{q}_h$}  & \multicolumn{2}{c}{$\lambda_h$}\\
			{$\Delta t$} & {$L^2(\mathcal{E})$-error} & {Order} & {$L^2(\partial \mathcal{E}_{\mathcal{S}})$-error} & {Order}\\
			
			\midrule
			1 & {1.8e-2} & {-} & {1.8e-2} & {-}\\
			1/2 & {5.3e-3} & {1.7} & {5.2e-3} & {1.8}\\
			1/4 & {1.8e-3} & {1.6}& {1.5e-3} & {1.8}\\
			1/8 & {1.4e-3} & {0.3} & {9.9e-4} & {0.6}\\
			1/16 & {2.0e-3} & {-0.5} & {1.5e-3} & {-0.6}\\
			1/32 & {3.2e-3} & {-0.7} & {2.6e-3} & {-0.8}\\
			1/64 & {5.6e-3} & {-0.8} & {5.0e-3} & {-0.9}\\
			1/128 & {1.0e-2} & {-0.9} & {9.5e-3} & {-0.9}\\
			1/256 & {1.8e-2} & {-0.8} & {1.8e-2} & {-0.9}\\
			\bottomrule        
		\end{tabular}
		\label{tab:ConvRates_divergence}
	\end{table}
\end{center}

%------------------------------------------------------------------------------
\subsection{Simulation of water waves in a water tank}
\label{ss:waterwavestank}

In this example we consider waves generated by a piston-type wave
maker. This test case is proposed in \cite{Westhuis:2001}. In this
case we consider the spatial domain
$\Omega = \sbr{0, 10} \times \sbr{-1, 0}$. We apply homogeneous
Neumann boundary conditions on $x_1 = 10$ and $x_2 = -1$. The wave
maker is located on the left side of the domain, i.e., at $x_1 = 0$,
where the boundary condition is given by
\begin{equation}
  \boldsymbol{q}\cdot \boldsymbol{n} = T(t),
\end{equation}
where $T(t) = ia\exp(-a f t)$ with $a=0.05$ the amplitude of the wave
and $f=1.8138$ the frequency of the wave. Only the real part of $T(t)$
produces physical solutions. We compute the solution for
$t \in [0, 53.4]$ and take $\Delta t = 0.2$. The mesh consists of 512
prismatic elements which are constructed by extruding spatial
triangles in the time direction. \Cref{fig:wavemaker} shows the
free-surface elevation or wave height at different time levels for
polynomial orders $p=1$, $p=2$ and $p=3$. At time $t = 4$, the first
wave leaves the wave maker which is located at $x_1 = 0$. At
$t = 25.8$ the waves reach the right wall of the water tank. At time
$t = 53.4$, waves have hit the right wall and have started traveling
in the opposite direction. We furthermore note that the discretization
is less diffusive as the polynomial degree increases.
 
\begin{figure}
  \centering
  \subfloat[Free-surface elevation at $t = 4$.]
  {
    \includegraphics[width=0.46\linewidth]{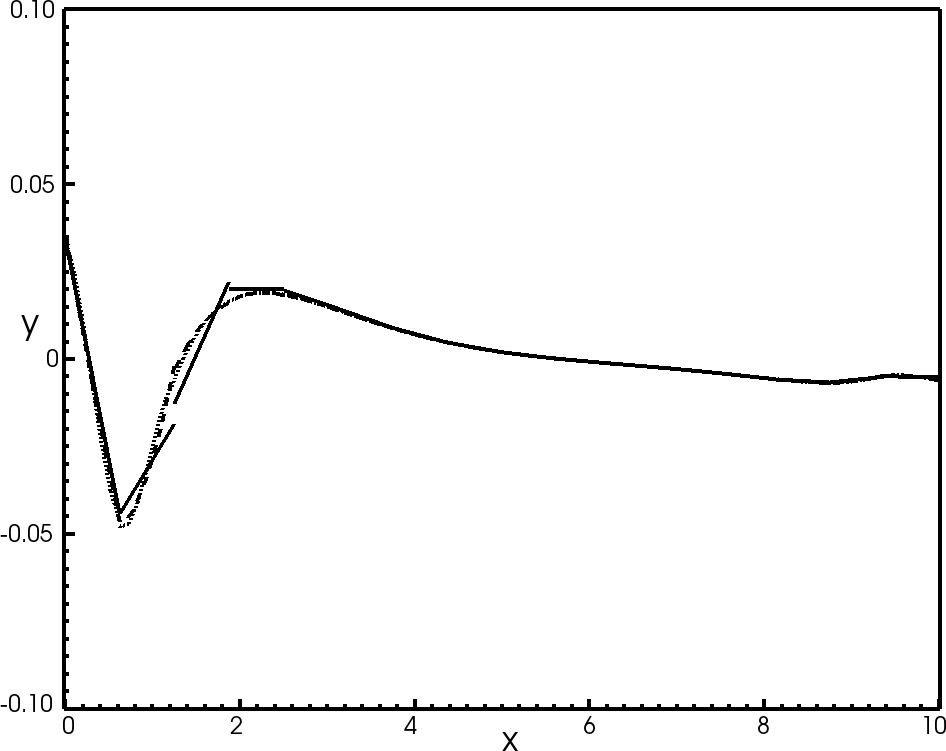}
  }
  \quad
  \subfloat[Free-surface elevation at $t = 25.8$.]
  {
    \includegraphics[width=0.46\linewidth]{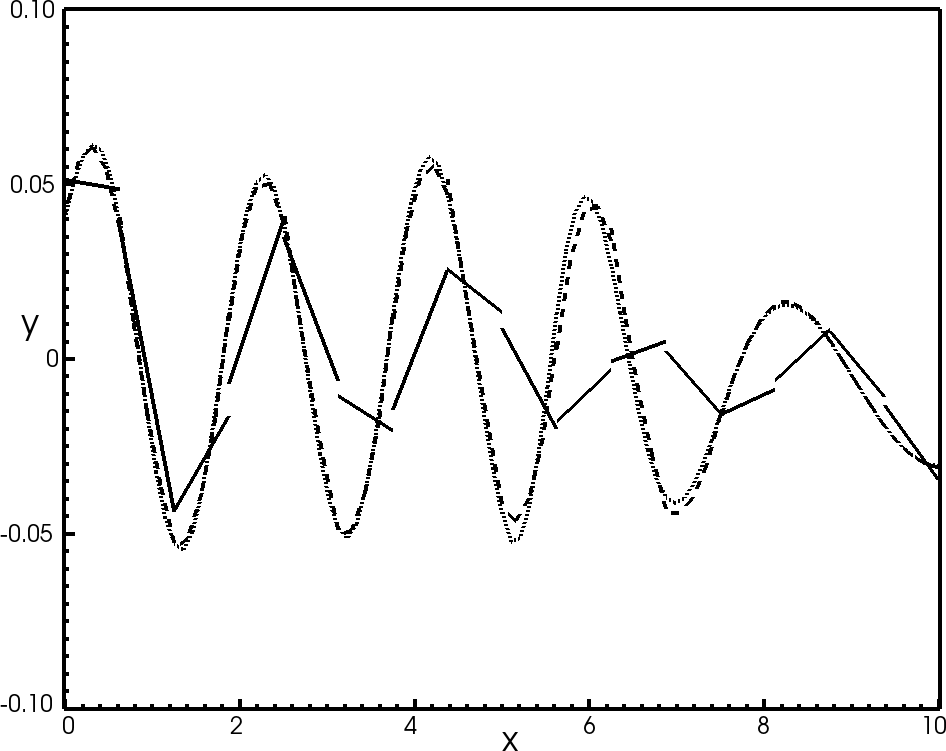}
  }
  \\
  \subfloat[Free-surface elevation at $t = 53.4$.]
  {
    \includegraphics[width=0.46\linewidth]{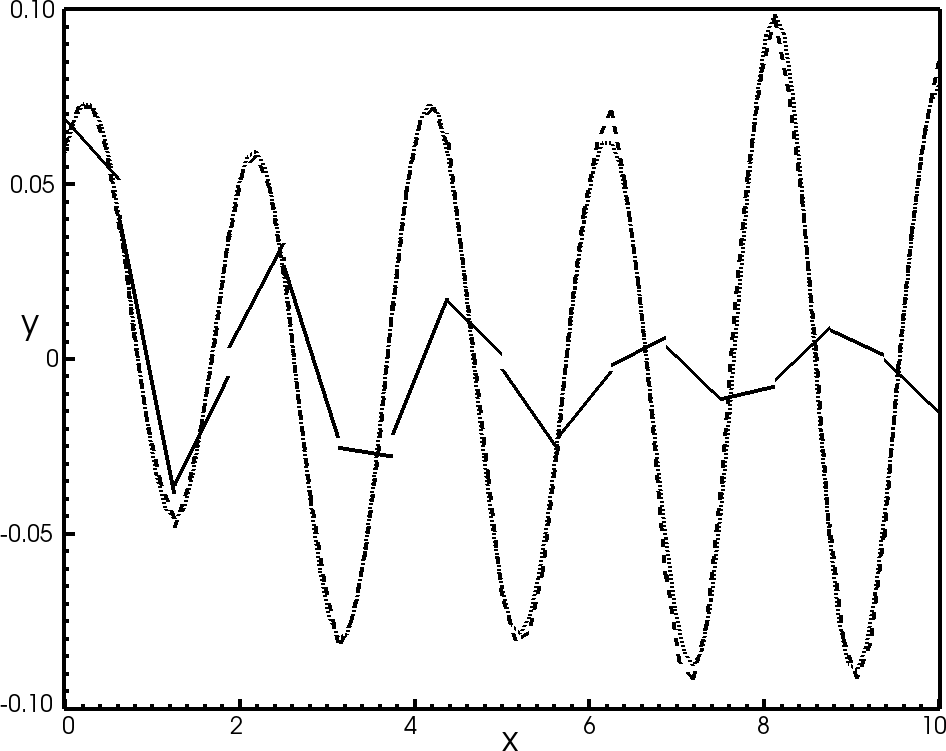}
  }
  \caption{Simulation of water waves in a water tank, see
    \cref{ss:waterwavestank}. The free-surface elevation at different
    time levels for polynomial degree $p=1$ (solid line), $p=2$
    (dashed line), and $p=3$ (dotted line).}
  \label{fig:wavemaker}
\end{figure}

%------------------------------------------------------------------------------
\section{Conclusions}
\label{sec:conclusions}

In this paper we presented a space-time hybridizible discontinuous
Galerkin method for the mixed form of the linear free-surface problem
on prismatic space-time meshes. The use of weighted inner products
allows us to show well-posedness of the discrete problem. An \emph{a
  priori} error analysis was performed by using a projection operator
tailored to the discretization. Additionally, this analysis explicitly
specifies the dependency on the spatial mesh size and time
step. Numerical tests verify our theoretical results.

%-----------------------------------------------------------------------------
\bibliographystyle{amsplain}
\bibliography{references}
%-----------------------------------------------------------------------------
\end{document}